\newtheorem{Theorem}{Theorem}[section]
\newtheorem{Proposition}{Proposition}[section]
\newtheorem{Lemma}{Lemma}[section]
\newtheorem{Remark}{Remark}[section]
\numberwithin{equation}{section}
\def\eqdefa{\buildrel\hbox{\footnotesize def}\over =}
\newcommand{\beq}{\begin{equation}}
\newcommand{\eeq}{\end{equation}}
\newcommand{\ben}{\begin{eqnarray}}
\newcommand{\een}{\end{eqnarray}}
\newcommand{\beno}{\begin{eqnarray*}}
\newcommand{\eeno}{\end{eqnarray*}}
\newcommand{\bali}{\begin{aligned}}
\newcommand{\eali}{\end{aligned}}
\newcommand{\ve}{\epsilon}
\newcommand{\ud}{\mathrm{d}}
\newcommand{\ee}{\mathbf{e}}
\newcommand{\RR}{\mathbb{R}}
\newcommand{\CR}{\mathcal{R}}
\newcommand{\CG}{\mathcal{G}}
\newcommand{\CU}{\mathcal{U}}
\newcommand{\CA}{\mathcal{A}}
\newcommand{\CH}{\mathcal{H}}
\newcommand{\CM}{\mathcal{M}}
\newcommand{\BS}{{\mathbb{S}^2}}
\newcommand{\pa}{\partial}
\def\({\left (}
\def\){\right )}
\def\<{\left\langle}
\def\>{\right\rangle}
\def\p{\partial}
\def\O{\Omega}
\begin{document}

\title[Small Deborah number limit of Doi-Onsager equation]{The small Deborah number limit of the Doi-Onsager equation without hydrodynamics}

\author{Yuning Liu }
\address{NYU-ECNU Institute of Mathematical Sciences at NYU Shanghai, 3663 Zhongshan Road North, Shanghai, 200062, P. R. China}
\email{yl67@nyu.edu}

\author{Wei Wang}
\address{Department of Mathematics, Zhejiang University, 310027, Hangzhou, P. R. China}
\email{wangw07@zju.edu.cn}

\date{\today}

\vspace{-0.3in}
\begin{abstract}
We study the small Deborah number limit of the Doi-Onsager equation for the dynamics of nematic liquid crystals without hydrodynamics. This is a Smoluchowski-type equation that characterizes the evolution of a number density function, depending upon both particle position $x\in \mathbb{R}^d(d=2,3)$ and orientation vector $m\in\mathbb{S}^2$ (the unit sphere). We prove that, when the Deborah number tends to zero, the family of solutions with rough initial data near local equilibria will converge strongly to a local equilibrium distribution prescribed by a weak solution of the harmonic map heat flow into $\mathbb{S}^2$. This flow is a special case of the gradient flow to the Oseen-Frank energy functional for nematic liquid crystals. The key ingredient is to show the strong compactness of the family of number density functions and the proof relies on the strong compactness of the corresponding second moment (or the $Q$-tensor), a spectral decomposition of the linearized operator near the limit local equilibrium distribution, as well as the energy dissipation estimate.
\end{abstract}

\maketitle

\section{Introduction}

\subsection{Mathematical theories of the liquid crystal}

Liquid crystals are matter in a  state which has properties between those of a conventional fluid and  those of a solid crystal. The quintessential property of a liquid crystal is its anisotropy.
One of the most common phases for liquid crystal is the nematic phase, in which the molecules  tend to have the same alignment, but their positions are not correlated.  Nematic liquid crystal can be modeled at different scales employing different  {\it order parameters}, which quantify the anisotropic behavior of the material (see for instance \cite{stephen1974physics}), and the choice of the parameters  leads to different theories.

This paper is concerned with two dynamical descriptions of nematic liquid crystals. The more fundamental theory is a microscopic molecular theory, in which the order parameter is a family of number density function  $f(m,x,t)$ on $\BS$ that describes the density of molecules at point $x\in\RR^d$ at time $t$
having alignment $m\in\BS$.   The
alignment $m$ is an idealized description of the orientation of a hard-rod molecule. In a limit that will be rigorously justified in this paper, the microscopic theory  gives rise to the other theory, which is a macroscopic vector theory, and  in this setting, the information is given by a function $n(x,t)$ taking values in the unit sphere $\BS$. The formula that  bridges  these two theories is the following
special form of the number density function
\begin{equation}\label{eq:1.43}
f(m,x,t) = \frac{1}{Z} e^{ \eta (m \cdot n(x,t) )^2 },
\end{equation}
where $\eta$ depends on a coupling constant in the interaction and $Z$ is the renormalization constant. If $\eta$ is large, this is a probability density that is concentrated near $n(x,t)$.

In the microscopic molecular theory, in order to characterize the static configuration of liquid crystals,    Onsager introduced in \cite{Onsager1949}
  a free energy functional on a given domain $\Omega$ as
\begin{equation}\label{eq:free energy-inhom-intro-0}
\begin{split}
  \mathcal{E}[f]=\int_\Omega\int_\BS \left(f\log{f }
+\frac12f \CU[f]\right)\ud m\ud x.
\end{split}
\end{equation}
The first part in \eqref{eq:free energy-inhom-intro-0} is the entropy, corresponding to the (rotational) Brownian motion that the rod-like molecules undergo, while the second part describes  the interaction energy among them. Here the mean-field potential $\CU[f]$
is defined as
\begin{equation}\label{eq:1.06}
\CU[f]=\int_\Omega\int_\BS{B}( x, m; x', m ')f( x ', m ')\ud x '\ud m',
\end{equation}
where ${B}( x, m; x', m ')=B( x-x'; m, m ')\geq 0$ is a kernel function that measures  the interaction  potential energy  between two molecules with configuration $(m,x)$ and $(m',x')$ respectively.
In Onsager's original setting, $B( x-x'; m, m ')$ is chosen to be $1$ if two molecules with
configuration $(m,x)$ and $(m',x')$  are joint, and  $B( x-x'; m, m ')=0$ if otherwise. This definition is called the hard-core excluded volume interaction potential \cite{Onsager1949}.
In this work, we consider an alternative and more regular  form of $B$ which is proposed in \cite{wang2002kinetic}:
\begin{equation}\label{inter:convolution}
  B(x,m;x',m') =\alpha |m\wedge m'|^2 k_\ve({x-x'}).
\end{equation}
 Here $a\wedge b$ denotes the usual wedge product of two vectors $a,b\in\RR^3$,  and $\alpha$ is a parameter that measures the intensity of the potential. Moreover, \[k_\ve(x)=\frac 1{\ve^{d/2}}k\(\frac x{\sqrt{\ve}}\),\]
  where $k(x)$ is a positive  function that decays  at infinity.
The positive parameter $\ve$ represents the typical interaction distance among molecules,  and $d=2$ or $3$ is the dimension of the ambient space.
The above potential  shares qualitatively the same features as Onsager's original potential, but  it is easier
to study analytically due to its smoothness and decoupled structure with respect to  spatial  variable $x$ and alignment direction $m$.

The system considered in this work is the dynamical equation corresponding to \eqref{eq:free energy-inhom-intro-0}, introduced by Doi \cite{doi1988theory}.  Define the chemical potential as
\begin{align*}
  \mu[f]=\frac {\delta \mathcal{E}[f]}{\delta f}=\log f+\CU[f].
\end{align*}
Then the evolution for the number density function $f=f(m,x,t)$  is governed  by the following Smoluchowski equation:
\begin{align}\label{eqn:full-f}
\frac{\partial{f}}{\partial{t}}=\frac 1
{De}\CR\cdot\big(f\CR\mu[f]\big)-\CR\cdot\big( m\wedge(\nabla v)^T\cdot m{f}\big),
\end{align}
where $\mathcal{R}$ is the rotational gradient operator defined on the unit sphere by
$\mathcal{R}= m \wedge\nabla_{ m }$ (see Section 3).  Moreover, $(\nabla v)^T$ is the transpose of the velocity gradient, and
 $De$ is the Deborah number characterizing the typical relaxation time  which is usually very small.
The fluid velocity $v$ satisfies the following Navier-Stokes type equation
\begin{align}\label{eqn:full-v}
v_t+v\cdot\nabla v=&-\nabla{p}+\nabla\cdot\tau+F^e,\qquad \nabla\cdot v=0.
\end{align}
Here $p$ is the pressure, $\tau$ and $F^e$ are stress and body force respectively given by
\begin{align*}
\tau=2\eta_s D+\frac12\xi_r D:\langle mmmm\rangle_f -\langle mm\wedge\CR\mu\rangle_f,\quad
F^e=-\langle\nabla\mu\rangle_f.
\end{align*}
In this expression $\eta_s, \xi_r$ are  material related constants, $D=\frac12(\nabla v+(\nabla v)^T)$, and
$$\langle(\cdot)\rangle_f\eqdefa \int_{\BS}(\cdot)
f(m)\ud{ m}.$$ We remark that the stress term $\tau$ was introduced by Doi \cite{doi1988theory}, while the body force $F^e$ was first introduced by E and Zhang \cite{EZ}.
We also refer  to  \cite{ZhangZhang2008} for the construction of smooth solution  to the system (\ref{eqn:full-f})-(\ref{eqn:full-v}).

 Another theory for nematic liquid crystal is the aforementioned macroscopic vector theory, which views the material as a continuum. The order parameter that it employs is   a unit-vector field $n(x,t)$,  describing the locally preferred alignment of the molecules near the material point $x$.
 The corresponding distortion energy, which is known as the Oseen-Frank energy,
takes the following form:
\begin{equation}\label{eq:oseen}
  E_{OF}[n]=\tfrac {k_1} 2(\nabla\cdot n)^2+\tfrac {k_2} 2( n{\cdot}(\nabla\wedge n))^2
+\tfrac {k_3} 2|n{\wedge}(\nabla\wedge n)|^2
+\tfrac{k_2+k_4}2\big(\textrm{tr}(\nabla n)^2-(\nabla\cdot n)^2\big),
\end{equation}
where $k_1, k_2, k_3, k_4$ are elasticity constants which are distinct in general.
The first three terms in \eqref{eq:oseen} correspond to the three typical pure deformations, i.e.
splay, twist and bend, while  the last term is a null lagrangian, discovered by Ericksen   \cite{Ericksen1976}.
The analytic properties of  minimizers of \eqref{eq:oseen} under Dirichlet boundary condition was investigated in \cite{HardtKinderlehrerLin1986}.
The Oseen-Frank energy \eqref{eq:oseen}  is reduced to the Dirichlet energy
\begin{equation}\label{eq:harmonic}
E_{OF}[n]=\tfrac {\Lambda} 2|\nabla n|^2,
\end{equation}
when one makes the one-constant approximation: $k_1=k_2=k_3=\Lambda, k_4=0$.
Minimizing \eqref{eq:harmonic} among  mappings  from $\O$ into $\BS$ under certain boundary conditions   leads to  harmonic maps into $\BS$, which  are widely studied in the past few decades, see \cite{LinWang2008} and references therein.

For the purpose of describing the hydrodynamics of liquid crystals,  Ericksen and Leslie \cite{EricksenARMA1962Hydrostatics,LeslieARMA1968}
formulated a hydrodynamical system which is known as  Ericksen-Leslie system. It is a very sophisticated  PDE system which couples a Navier-Stokes equation describing the conservation of momentum
with an evolution equation for the vector field $n(x,t)$. We refer to \cite{MR3518239,LinWang2014} for the recent progresses on the mathematics of this system. When the fluid effect is neglected, i.e., the velocity is $0$, then the Ericksen-Leslie system is reduced to the gradient flow of the Oseen-Frank energy \eqref{eq:oseen}.
Under the aforementioned one-constant approximation, this gradient flow becomes the harmonic map heat flow into $\BS$
  \begin{equation}\label{eq:1.44}
  \p_t n=\Lambda(\Delta n+|\nabla n|^2n),
\end{equation}
 which is well-known and widely studied during the past decades. It is worth mentioning that, even for regular initial data, the (local-in-time) smooth solution to \eqref{eq:1.44} might develop singularity at a finite time and thus in general,  the global-in-time solutions to \eqref{eq:1.44} might only have very limited differentiability. See \cite{LinWang2008} and references therein for the analysis of \eqref{eq:1.44}.

Another  theory for nematic liquid crystal is the Landau-De Gennes theory. Like the vector theory, it views the  material as a continuum. However, the order parameter it uses is a symmetric traceless $3\times3$ matrix $Q$ (usually referred to as the $Q$-tensor), which  can be interpreted  as the second moment of a number density function $f$:
\beno
Q[f](\cdot)=\int_{\BS}\big(m\otimes m-\frac 1 3I_3\big)f(m,\cdot)\ud m.
\eeno
We refer to the book by de Gennes-Prost \cite{DeGennesProst1995} for physics of this theory.

\subsection{From microscopic theories to macroscopic theories for liquid crystals}

Exploring the connections between different theories for liquid crystal flow is a fundamental issue in liquid crystal studies.
Kuzzu-Doi \cite{KD} first derived the Ericksen-Leslie equations and determined the Leslie coefficients  from the Doi-Onsager equation under the small Deborah number limit. However,
the Ericksen stress was missing. E-Zhang \cite{EZ} extended Kuzuu and
Doi's formal derivation to the inhomogeneous case and the Ericksen stress was obtained from an extra introduced body force. Roughly speaking, E and Zhang showed
that the solution $(f,v)$ of (\ref{eqn:full-f})-\eqref{eqn:full-v} with $De=\ve$ has a formal expansion
\beno
&&f=f_0(m\cdot n)+\ve f_1+\cdots,\\
&&v=v_0+\ve v_1+\cdots, \eeno
where $f_0$ is an equilibrium distribution  of the form \eqref{eq:1.43}, and $(v_0(x,t),n(x,t))$ is a solution to  the Ericksen-Leslie system.

In \cite{WZZ-cpam}, Wang-Zhang-Zhang give a first rigorous derivation of the
Ericksen-Leslie system from the Doi-Onsager equation when the Deborah number tends to 0 by using the Hilbert expansion method similar to
\cite{Caf} for the Boltzmann equation. In \cite{wang2015rigorous}, the relation between dynamic $Q$-tensor system
and Ericksen-Leslie system was explored by the same authors. In \cite{HLWZZ}, a systematic way was proposed to model liquid crystals for different phases
based on the molecular theory.

In \cite{WZZ-cpam, wang2015rigorous}, the singular limits are justified within the framework of smooth solutions, which excludes a large class of physical solutions that are not regular at space-time locations where the defects of liquid crystal arise. Thus, it is an important question to explore the relationships between different theories in the framework of weak solutions. At this stage, it is worth mentioning that Golse and Saint-Raymond \cite{GS} justified the limit from the renormalized weak solution of the Boltzmann equation to the Leray weak solution of the Navier-Stokes equations.

Our goal is to justify the small Deborah number limit from  the Doi-Onsager equation (\ref{eqn:full-f})-\eqref{eqn:full-v} to the Ericksen-Leslie system in the framework of weak solutions. In this work, we  shall restrict ourselves to the case without hydrodynamics and then the Ericksen-Leslie system is reduced to \eqref{eq:1.44}. The general case should be a challenging problem, due to the possible lack of monotonicity formulas and maximum principle (see a recent work of Lin and Wang \cite{MR3518239}). On the other hand,  Wang, Wang and Zhang \cite{WWZ} justified the limit from the $Q$-tensor flow to \eqref{eq:1.44} in the framework of weak solutions, where the key ingredient is to establish some monotonicity formulas. In \cite{LiuWang2016}, the authors considered the asymptotic limit of $\ve$  for  critical points and minimizers of the energy functional (\ref{eq:free energy-inhom-intro-0})-(\ref{inter:convolution}), and the one-constant approximation of Oseen-Frank energy is derived in the limit.  See also \cite{taylor2017oseen} for a $\Gamma$-convergence approach where a more general energy than \eqref{eq:harmonic} is obtained in the limit.

\subsection{Main results}

To derive the corresponding vector theory of physical interest, we should take $De\sim\ve$ in \eqref{eqn:full-f}, as in \cite{WZZ-cpam}. For simplicity, we set $De=\ve$ and this leads to
the Doi-Onsager equation without hydrodynamics:
\begin{align}\label{main:eqn}
\frac{\partial{f}}{\partial{t}}=\frac 1{\ve}\CR\cdot\big(\CR f+f\CR \CU_\ve[f]\big),\quad (x, m)\in\RR^d\times\BS,
\end{align}
where $\CU_\ve[f]$ denotes the inhomogeneous interaction potential, given by
\begin{align}\label{main:potential}
    &\CU_\ve[f]=\alpha \int_{\RR^d}\int_\BS |m\wedge m'|^2k(\tfrac{x-x'}{\ve }) f( x ', m ')\ud m'\ud x'.
    \end{align}
Note that a related kinetic  model  for self-propelled particles has been discussed in  \cite{MR3305654,frouvelle2012dynamics}.

It is easy to derive a conservation law  for smooth solution to (\ref{main:eqn}):
\begin{equation}\label{eq:1.19}
  \p_t \int_\BS f\ud m=\frac 1 {De}\int_\BS \CR\cdot\big(f\CR(\log{f}+\CU_\ve [f])\big)\ud m=0.
\end{equation}
For the sake of investigating  the small $\ve$ asymptotic  of the solution to \eqref{main:eqn}, we need to know the equilibrium of the homogeneous energy functional (here  {\it homogeneous} refers to the case when the interaction kernel is  independent of spatial variable $x$):
\begin{equation}\label{maiersaupe}
  \mathcal{E}_0[f]=\int_{\BS}\Big(f(  m,\cdot )
\log f( m,\cdot  )+\tfrac{1}{2}\CU_0 [f] (m,\cdot)f(  m,\cdot  )\Big)\ud m,
\end{equation}
where $\CU_0[f]$ denotes the homogeneous interaction potential
\begin{equation}\label{ms-potential}
  \CU_0[f]( m,\cdot)=\alpha\int_{\BS}|m\wedge m'|^2f( m',\cdot)\ud m'.
\end{equation}
The model (\ref{maiersaupe})-(\ref{ms-potential}) is  the so called Maier-Saupe model, of which equilibrium points have been completely classified in \cite{fatkullin2005critical,MR2164198}.
One of the main results there is that, when $\alpha>7.5$ (this is the parameter region in which the isotropic phase loses stability), all minimizers of $ \mathcal{E}_0[f]$ can be written as
\begin{equation}
   {f}_0(m)=h_{\nu}(m):=\frac 1Ze^{\eta(m\cdot \nu)^2},\quad Z=\int_\BS e^{\eta(m\cdot \nu)^2} \ud m,\label{def:f0}
\end{equation}
for every given $\nu\in\BS$. Here $\eta$ is an increasing function of $\alpha$ that will be discussed in Section \ref{subsection:equilibrium} in details.

In the sequel, we shall always assume $\alpha>7.5$ and  denote $E_0$ by the minimum of $\mathcal{E}_0[f]$:
\begin{equation}\label{eq:1.31}
   E_0:=\inf \mathcal{E}_0[f]=\mathcal{E}_0[h_{\nu}].
\end{equation}
Moreover, we introduce  the inhomogeneous energy functional as well as the chemical potential:
\begin{align}\label{eq:free energy-inhom-intro}
  &\mathcal{E}_\ve[f]=\int_{\RR^d}\int_{\BS}\Big( f( x , m )\log{f( x , m )}
+\frac12f( x , m )\CU_\ve[f]( m , x )-{\frac{E_0}{4\pi}}\Big)\ud m\ud x,\\
  &\mu_\ve[f]=\frac {\delta \mathcal{E}_\ve[f]}{\delta f}=\log f+\CU_\ve [f].\nonumber
\end{align}
 For a unit-norm vector field $\nu=\nu(t,x)$, we call $h_{\nu}$ a {\em local equilibrium distribution} (of the energy functional $\mathcal{E}_\ve$).
If $\nu\equiv e_0$ for some fixed $e_0\in\BS$, we call $h_{\nu}$ a {\em uniform equilibrium distribution}.
Local and uniform equilibrium distributions will play analogous roles in our analysis as local and uniform Maxwellians do in the hydrodynamic limit of the Boltzmann equation.

In the sequel, we denote $f_{e_0}:=h_{e_0}(m)$  the uniform equilibrium distribution  oriented by a constant vector $e_0\in\BS.$
Then one has the following energy dissipation law for smooth solution of (\ref{main:eqn}) that decays sufficiently fast to $f_{e_0}$ at $x=\infty$ :
\begin{equation*}
 \frac{\ud }{\ud t} \mathcal{E}_\ve[f]+\frac 1{{\ve}} \int_{\RR^d}\int_{\BS}f|\CR (\log{f}+\CU_\ve[f]) |^2\ud m\ud x= 0.
\end{equation*}
The above identity can formally be derived by first multiplying \eqref{main:eqn} with $\log f+\CU_\ve[f]$ and then integrating by  parts.

The main result of this paper is given below:
\begin{Theorem}\label{thm:converge}
  Consider  $f^{in}_\ve\in L^\infty(\RR^d;L^2(\BS) )$ with
\begin{equation} \label{assump:bound}
    \begin{split}
      &f^{in}_\ve\geq \delta>0,~a.e. ~(x,m)\in\RR^d\times\BS,~\text{for some fixed}~\delta>0,\\
      &\|f^{in}_\ve-f_{e_0}\|_{L^2(\RR^d\times \BS)}<\infty, ~  \int_\BS f^{in}_\ve(x,m)\ud m=1,~a.e. ~x\in\RR^d.
    \end{split}
  \end{equation}
  Then we have

    (i). The Doi-Onsager equation \eqref{main:eqn} with initial condition $f|_{t=0}=f_\ve^{in}$  has a unique positive  solution, denoted by $f_\ve$,  satisfying, for every $T\in (0,\infty)$,
 \begin{equation}\label{eq:1.11}
\begin{split}
&f_\ve\in L^\infty(\RR^d;C^{ \infty }((0,T)\times\BS)),~\p_tf_\ve,\,\Delta_{\BS} f_\ve\in L^\infty(\RR^d;L^2(0,T;H^{-1}(\BS))),\\
&f_\ve\geq C(\ve,T) \delta,~\int_\BS f_\ve(m,x,t)\ud m=1~a.e. ~(x,t)\in\RR^d\times [0,T],
\end{split}
\end{equation}
where $C(\ve,T)$ denotes a positive constant depending on $\ve$ and $T$. Moreover, the following energy dissipation law holds for almost every $t\in (0,T)$:
\begin{align}\label{eq:ME}
&\frac 1\ve\int_{\RR^d} (\mathcal{E}_0[f_\ve](x,t)-\mathcal{E}_0[f_{e_0}] (x,t))\ud x+\frac \alpha{4\ve}\int_{\RR^d\times\RR^d}\left| Q [f_\ve]( x,t )- Q [f_\ve]( y,t)\right|^2
k_\ve\( x - y\)\ud x \ud y\nonumber\\
&\quad+\frac{1}{\ve^2}\int_0^t\int_{\RR^d}\int_{\BS}f_\ve|\CR\mu_\ve [f_\ve]|^2\ud m \ud x \ud \tau=\frac{\mathcal{E}_\ve[f^{in}_\ve]}{\ve},
\end{align}
if the right hand side is bounded.

  (ii). If in addition to \eqref{assump:bound}, assumes that $k(x)\in L^1(\RR^d;\RR_+)$ is a radial function satisfying:
\begin{equation}\label{eq:1.42}
   {|x|^2k(x)\in L^1(\RR^d),~\nabla k(x)\in L^1(\RR^d) }
\end{equation}
and there exists some constant $C>0$ independent of $\ve$ such that
 \begin{equation} \label{assump:uniform}
\|f^{in}_\ve-f_{e_0}\|_{L^2(\RR^d\times \BS)}\leq C,~{\mathcal{E}_\ve[f_\ve^{in}]} \leq  C \ve, ~\text{and}~ \|f_{\ve}^{in}\|_{L^\infty(\RR^d;L^2(\BS))}^2\leq C\ve^{-1},
  \end{equation}
 then up to the extraction of a subsequence, it holds that for every $T>0$ and every compact set $W\subseteq \RR^d$,
\begin{align*}
Q[f_\ve]\xrightarrow{\ve\to 0}  Q[f_0]\quad &\text{strongly in}\,\, C([0,T];L^2(W)),\\
f_\ve \xrightarrow{\ve\to 0}  f_0\quad &\text{strongly in}\,\, L^2\big(W\times \BS \times(0,T)\big),
\end{align*}
where $f_0=\frac 1Z e^{\eta(m\cdot n(x,t))^2}$ for some $n(x,t)\in C([0,T];L^2_{loc} (\RR^d;\BS))$  with
\begin{equation}\label{defweak001}
{n(x,t)-e_0 \in L^\infty\big(0,T;H^1(\RR^d)\big)},~\text{and}~\p_t n\in  L^2 ((0,T)\times\RR^d).
\end{equation}
Furthermore, $n(x,t)$   is a weak solution to \eqref{eq:1.44}
with initial data $n(x,0)$ satisfying
 \begin{equation*}
  \lim_{\ve\to 0}Q[ f_\ve^{in}]=S_2(n(x,0)\otimes n(x,0)-\tfrac 13I_3)~\text{strongly in}~L^2_{loc}(\RR^d).
 \end{equation*}
 Here  $\Lambda$ and $S_2$ are positive constants only depending on the interaction intensity $\alpha$, the dimension $d$, and the kernel function $k(x)$.
\end{Theorem}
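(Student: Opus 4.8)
The plan for part (i) is to treat \eqref{main:eqn}, for each fixed $\ve$, as a semilinear uniformly parabolic equation on the compact manifold $\BS$ in which $x$ enters only as a parameter through the convolution in $\CU_\ve$. The structural fact making this routine is that, since $\int_\BS f^{in}_\ve(x,\cdot)\,dm=1$ and $|m\wedge m'|^2\le1$, one has $\|\CU_\ve[f]\|_{L^\infty}\le\alpha\|k\|_{L^1}$ and $\|\CR\CU_\ve[f]\|_{L^\infty}+\|\Delta_{\BS}\CU_\ve[f]\|_{L^\infty}\le C\alpha\|k\|_{L^1}$ uniformly in $x$, so the drift is a bounded smooth vector field on $\BS$. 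I would build the solution by an iteration scheme in $L^\infty_x$ (freeze $\CU_\ve$, solve the resulting linear parabolic equation on $\BS$ for each $x$), obtain uniqueness and positivity, hence $f_\ve\ge C(\ve,T)\delta$ with a possible exponential-in-$t/\ve$ loss, from the maximum principle, and upgrade to $f_\ve\in L^\infty_x(C^\infty((0,T)\times\BS))$ by bootstrapping parabolic smoothing for $t>0$, all bounds being uniform in the parameter $x$; $\int_\BS f_\ve\,dm=1$ is \eqref{eq:1.19}. The dissipation law \eqref{eq:ME} then follows by testing \eqref{main:eqn} with $\mu_\ve[f_\ve]=\log f_\ve+\CU_\ve[f_\ve]$ and inserting the algebraic identity $\mathcal E_\ve[f]=\int_{\RR^d}(\mathcal E_0[f](x)-E_0)\,dx+\tfrac\alpha4\iint|Q[f](x)-Q[f](y)|^2 k_\ve(x-y)\,dx\,dy$, which comes from $|m\wedge m'|^2=1-(m\otimes m):(m'\otimes m')$ and the normalization $\int k=1$; note both terms are nonnegative.

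For part (ii), the first step is to read off from \eqref{eq:ME} and $\mathcal E_\ve[f^{in}_\ve]\le C\ve$ the $\ve$-uniform bounds: (a) $\int_{\RR^d}(\mathcal E_0[f_\ve](x,t)-E_0)\,dx\le C\ve$ for a.e. $t$; (b) $\ve^{-1}\iint|Q[f_\ve](x,t)-Q[f_\ve](y,t)|^2 k_\ve(x-y)\,dx\,dy\le C$ for a.e. $t$; (c) $\ve^{-2}\int_0^T\!\int_{\RR^d}\!\int_\BS f_\ve|\CR\mu_\ve[f_\ve]|^2\le C$. Since $\mathcal E_0[f]\ge E_0$ with equality exactly on the manifold $\mathcal M:=\{h_\nu:\nu\in\BS\}$, bound (a) forces $f_\ve$ toward $\mathcal M$. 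Here I would use the spectral analysis of the Maier--Saupe energy near $\mathcal M$ (a quantitative form of the classification in \cite{fatkullin2005critical,MR2164198}: $\mathcal E_0[f]-E_0\gtrsim\operatorname{dist}(f,\mathcal M)^2$ locally, and the linearized Onsager operator $\mathcal L_\nu g:=-\CR\cdot\big(h_\nu\CR(g/h_\nu+\CU_0[g])\big)$ has kernel $T_{h_\nu}\mathcal M$ and a spectral gap transverse to it) to write $f_\ve=h_{\nu_\ve}+r_\ve$ with $r_\ve\perp T_{h_{\nu_\ve}}\mathcal M$, $\|r_\ve(t)\|_{L^2_{x,m}}\lesssim\sqrt\ve$ from (a), and $\|r_\ve\|_{L^2_{t,x,m}}\lesssim\ve$ from (c) (expand $\CR\mu_\ve[f_\ve]=\CR(r_\ve/h_{\nu_\ve}+\CU_0[r_\ve])+O(\ve)$, use that $h_\nu$ is a critical point of $\mathcal E_0$ so $\CR(\log h_\nu+\CU_0[h_\nu])\equiv0$, and the gap of $\mathcal L_{\nu_\ve}$). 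The region where $f_\ve$ is far from $\mathcal M$ has measure $\lesssim\ve$ by Chebyshev and is absorbed using the $L^2_xL^2_m$ bound of \eqref{assump:uniform} sharpened by parabolic ultracontractivity.

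Next, compactness. For $Q[f_\ve]$, bound (b) rescaled by $y=x+\sqrt\ve z$ reads $\sup_t\iint|Q[f_\ve](x+\sqrt\ve z,t)-Q[f_\ve](x,t)|^2 k(z)\,dz\,dx\le C\ve$, which by \eqref{eq:1.42} is a Bourgain--Brezis--Mironescu-type estimate; together with the pointwise bound $|Q[f_\ve]|\le C$ it gives a uniform modulus of $L^2_{loc}(\RR^d)$-compactness in $x$, uniform in $t$, with weak limits in $H^1(\RR^d)$. Taking the second moment of \eqref{main:eqn} and integrating by parts on $\BS$ gives $\partial_t Q[f_\ve]=-\tfrac1\ve\int_\BS\CR(m\otimes m)\cdot f_\ve\CR\mu_\ve[f_\ve]\,dm$, whence $|\partial_t Q[f_\ve]|\le\tfrac C\ve\big(\int_\BS f_\ve|\CR\mu_\ve[f_\ve]|^2dm\big)^{1/2}$ and, by (c), $\|\partial_t Q[f_\ve]\|_{L^2((0,T)\times\RR^d)}\le C$. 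An Arzel\`a--Ascoli/Aubin--Lions argument then yields, up to a subsequence, $Q[f_\ve]\to Q_*$ in $C([0,T];L^2_{loc})$ with $Q_*\in L^\infty(0,T;H^1)$ and $\partial_t Q_*\in L^2$. Since $Q[f_\ve]=S_2(\nu_\ve\otimes\nu_\ve-\tfrac13 I_3)+o(1)$ in $L^2_{loc}$, the limit is $Q_*=S_2(n\otimes n-\tfrac13 I_3)$; lifting $Q_*\in H^1$ to a map $n\in H^1$ with $n-e_0\in L^\infty(0,T;H^1(\RR^d))$ is standard here because $S_2$ is a fixed positive constant (no isotropic defects) and $h_\nu=h_{-\nu}$, and differentiating $Q_*=S_2(n\otimes n-\tfrac13 I_3)$ with $|n|=1$ gives $\partial_t n\in L^2$. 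Continuity of $\nu\mapsto h_\nu$ upgrades this to $f_\ve=h_{\nu_\ve}+r_\ve\to h_n=:f_0$ strongly in $L^2_{loc}(W\times\BS\times(0,T))$; the initial-data claim follows from (a) applied at $t=0$ together with the $C([0,T];L^2_{loc})$ convergence.

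Finally, to identify $n$ as a weak solution of \eqref{eq:1.44}: since $k$ is radial its odd moments vanish, so $\CU_\ve[h_{\nu_\ve}]=\CU_0[h_{\nu_\ve}]+\tfrac{\alpha\kappa}2\ve\,\Delta_x\CU_0[h_{\nu_\ve}]+o(\ve)$ with $\kappa=\tfrac1d\int|z|^2k(z)\,dz$. Inserting $f_\ve=h_{\nu_\ve}+r_\ve$ and this expansion into $\partial_t Q[f_\ve]=-\tfrac1\ve\int_\BS\CR(m\otimes m)\cdot f_\ve\CR\mu_\ve[f_\ve]\,dm$ and using again $\CR(\log h_{\nu_\ve}+\CU_0[h_{\nu_\ve}])\equiv0$, the $\ve^{-1}$ singularity cancels and one is left with two $O(1)$ contributions, a term $\langle\mathcal L_{\nu_\ve}(m\otimes m),\ve^{-1}r_\ve\rangle$ and a term proportional to $\langle\,\cdot\,,\Delta_x\CU_0[h_{\nu_\ve}]\rangle$; evaluating the explicit action of $\mathcal L_{\nu_\ve}$ on the $m$-quadratics, using the dissipation relation to slave $\ve^{-1}r_\ve$ to $\nabla\nu_\ve$, equating with $\partial_t Q[h_{\nu_\ve}]=S_2\partial_t(\nu_\ve\otimes\nu_\ve-\tfrac13 I_3)$ and contracting onto the Goldstone directions produces a closed equation $\partial_t\nu_\ve=\Lambda(\Delta\nu_\ve+|\nabla\nu_\ve|^2\nu_\ve)+o(1)$ in $\mathcal D'$, with $\Lambda=\Lambda(\alpha,d,k)$ built from the spectral gap of $\mathcal L_{e_0}$, the moments of $h_{e_0}$ and $\kappa$; passing to the limit using the strong convergence $\nu_\ve\to n$, the weak convergences of $\nabla\nu_\ve$ and $\partial_t\nu_\ve$, and $|\nu_\ve|=1$ gives the claim. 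I expect the main obstacle to be the quantitative spectral/coercivity analysis transverse to the degenerate equilibrium manifold $\mathcal M$: it underlies the $O(\ve)$ control of $r_\ve$, the identification of $Q_*$ as uniaxial, the upgrade from compactness of $Q[f_\ve]$ to that of $f_\ve$, and the derivation of the limit PDE with the correct constant $\Lambda$, and it must be paired with a delicate treatment of the set where $f_\ve$ is not near $\mathcal M$; a second genuine difficulty is making the Laplacian $\Delta n$ emerge from the $\sqrt\ve$-scale of the kernel while showing the Goldstone mode $\nu_\ve$ moves at leading order.
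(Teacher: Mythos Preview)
Your outline for part (i) and for the first half of part (ii) --- the three uniform bounds from \eqref{eq:ME}, the compactness of $Q[f_\ve]$ in $C([0,T];L^2_{loc})$ via the Bourgain--Brezis--Mironescu-type control plus the $L^2$ bound on $\partial_t Q[f_\ve]$, and the identification of the limit as $S_2(n\otimes n-\tfrac13 I_3)$ with $n-e_0\in L^\infty_tH^1_x$ --- matches the paper's route essentially step for step. The paper also upgrades to strong $L^2$ convergence of $f_\ve$ using the spectral gap of the linearized operator, though it centers at the \emph{limit} $f_0=h_n$ rather than at a moving $h_{\nu_\ve}$; the two are equivalent once $Q[f_\ve]\to Q[f_0]$ strongly.

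Where you diverge from the paper is in the derivation of the limit equation. You propose to Taylor-expand $\CU_\ve=\CU_0+\tfrac{\alpha\kappa}{2}\ve\,\Delta_x\CU_0+o(\ve)$, insert $f_\ve=h_{\nu_\ve}+r_\ve$ into the moment equation, and ``slave $\ve^{-1}r_\ve$ to $\nabla\nu_\ve$'' via the dissipation. This is the Hilbert/Chapman--Enskog philosophy, and in the smooth setting (as in \cite{WZZ-cpam}) it works. In the present weak setting there is a real gap: the dissipation bound (c) only gives $\ve^{-1}r_\ve$ bounded in $L^2_{t,x,m}$, not convergent, so to close the $Q$-equation you must identify its weak limit, which in turn requires the limit equation you are trying to derive; the Taylor remainder $o(\ve)$ in $\CU_\ve$ is also not uniform without extra regularity on $Q[f_\ve]$ that you do not have.

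The paper sidesteps this circularity by a moment method with carefully chosen test functions: one tests \eqref{main:eqn} against $\psi=\varphi(t)\,\CA_{f_0}^{-1}\!\big(\Theta(x)\cdot\CR f_0\big)\in\ker\CG_{f_0}^*$. The point is the algebraic identity $\CR\cdot\big(f_0\CR\psi+\varphi\Theta f_0\big)\equiv 0$, which cancels the singular $\ve^{-1}$ contribution \emph{structurally}, leaving two $O(1)$ pieces: one reduces, after a commutator estimate for the Fourier multiplier $\mathcal T_\ve$ built from $k$, to $\int\partial_j\Theta\cdot(n\wedge\partial_j n)$; the other is $\ve^{-1}\int\sqrt{f_\ve}\,\CR\mu_\ve\cdot(\CR\psi+\varphi\Theta)\tfrac{f_\ve-f_0}{\sqrt{f_\ve}}$, which vanishes thanks only to the strong $L^2$ convergence $f_\ve\to f_0$ and (c), with no need to understand $\ve^{-1}r_\ve$. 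This is more robust than slaving, and it is exactly where your sketch would need a new idea.
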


\begin{Remark}
A weak solution to \eqref{eq:1.44} is some $n(x,t):\RR^d\times (0,T)\mapsto\BS $ fulfilling \eqref{defweak001} and the following identity for any $\Theta(x)\in C^\infty_c(\RR^d;\RR^3)$ and $\varphi(t)\in C_c^\infty(\RR_+;\RR)$:
\[
  \int_{\RR^d\times\RR_+} (\partial_t n\wedge n)\cdot \Theta(x)\varphi(t)\ud x\ud t =\Lambda \int_{\RR^d\times\RR_+} \varphi(t)\partial_j\Theta(x)\cdot( n\wedge\partial_jn)\ud x\ud t.
\]
It can be verified using $|n(x,t)|\equiv 1$ that, if a weak solution $n(x,t)$ is smooth, then it fulfills $$(\p_t n-\Lambda \Delta n)\wedge n=0$$ and this is equivalent to \eqref{eq:1.44}.
\end{Remark}

\begin{Remark}\label{remark:thm1}
The first part of Theorem \ref{thm:converge} is concerned with the wellposedness of \eqref{main:eqn}, which is proved in the beginning of  Section \ref{sec4}.
  Although these issues can be discussed  under much more relaxed assumptions on the interaction potential \eqref{eq:1.06} as well as the initial data,
  for the sake of investigating the scaling limit, we restrict ourselves to  the inhomogeneous Maier-Saupe potential  defined by \eqref{main:potential}
  and  initial data {\it near the local equilibria}, which include local equilibrium distributions as especial cases.  More precisely, if $n_\ve(x):\RR^d\mapsto\BS$ fulfills
\[ \|n_\ve-e_0\|_{H^1(\RR^d)}\leq C\]
  for some $C$ independent of $\ve$ and for some $e_0\in\BS$, then $f_\ve^{in}(m,x)=\frac 1Z e^{\eta(m\cdot n_\ve(x))^2}$ satisfies \eqref{assump:uniform}.
\end{Remark}
\begin{Remark}
We will give a more detailed discussion on assumptions \eqref{eq:1.42}  in Section 2.1.
\end{Remark}
Now we sketch the key steps in the proof for Part $(ii)$ of Theorem \ref{thm:converge}.
\smallskip

First of all, we will derive the uniform modulated energy estimate for the local energy dissipation \eqref{eq:ME}. This will be the main task of section \ref{sec4} and the primary  difficulty is  how to take care of the integrability of various terms.
Note that the second  condition in  \eqref{assump:uniform} is in analogy  to the {\it relative entropy condition} in \cite{MR2517786}.

The second step is to show that for every $T>0$ and compact domain $W\subseteq \RR^d$,
\begin{equation}
     {f_\ve}\xrightarrow{\ve\to 0} {f}_0\nonumber \quad \text{weakly in }\,\,L^1\big(\RR^d\times \BS\times (0,T)\big)
\end{equation}
for some local equilibrium distribution ${f}_0(m,x,t)=\frac 1Z e^{\eta(m\cdot n(x,t))^2}$.
This is a consequence of \eqref{eq:ME}.
To strengthen the above convergence,  we then prove the strong compactness of the second moment of $f_\ve$:
\begin{equation*}
 Q[f_\ve]\xrightarrow{\ve\to 0} Q[f_0]\quad \text{strongly in }~C([0,T];L_{loc}^2(\RR^{d})).
\end{equation*}
 More precisely, we shall make use of the second term on the left hand side of \eqref{eq:ME} to establish the following uniform  estimates for $Q[f_\ve]$:
\begin{align*}
&\big\|\p_t (Q[f_\ve]*k_\ve)\big\|_{L^2(\RR^d\times (0,T))} \leq C,\\
&\sup_{0\leq t\leq T}\frac 1\ve\int_{\RR^d} \big|Q[f_\ve]*k_\ve-Q[f_\ve]\big|^2 \ud x+\sup_{0\leq t\leq T}\int_{\RR^d} |\nabla (Q[f_\ve]*k_\ve)|^2 \ud x\leq  C.
\end{align*}
This is in a  spirit  similar to   the averaging type lemma in hydrodynamical limit theories of the Boltzmann equation.
In addition, several facts about the critical points of the Maier-Saupe energy (see Section 2) will also play important roles.

The most difficult step is to show that $n(x,t)$ satisfies the harmonic map heat flow. This could be  derived formally through the asymptotic expansion of \eqref{main:eqn} in terms of $\ve$ and a rigorous justification using Hilbert expansion is done in \cite{WZZ-cpam}. Our approach is based on moment method, that is, to consider the limit of the following formulation
\begin{align*}
\int\p_tf_\ve(m,x,t)\psi(m,x,t) \ud m\ud x\ud t
= \frac 1\ve\int\CR\cdot(f_\ve \CR \mu_\ve[f_\ve])\psi(m,x,t)\ud m\ud x\ud t
\end{align*}
for any $\psi(m,x,t)\in  \ker\mathcal{G}^*_{f_0}$. Here $\mathcal{G}_{f_0}=-\mathcal{A}_{f_0}\mathcal{H}_{f_0}$ is the linearized operator of $\CR\cdot\big(f\CR\mu_0[f]\big)$ ($\mu_0:=\log f+\CU_0 [f]$) at
the limiting equilibrium distribution $f_0$, where
\begin{equation}
    \CA_{f_0}\phi=-\CR\cdot(f_0\CR\phi),\quad \CH_{f_0}g=\frac g{f_0}+\CU_0[g].\nonumber
\end{equation}
Owning to $ \ker\mathcal{G}^*_{f_0}= \operatorname{span}\big\{\mathcal{A}_{f_0}^{-1}\CR_if_0\big\}$,  we will take  $$ \psi(m,x,t)=\varphi(t)\CA_{f_0}^{-1}\big(\Theta(x)\cdot\CR f_0\big)$$
for some test function $\Theta(x)\in C_c^\infty(\RR^d)$ and $\varphi(t)\in C_c^\infty(\RR_+)$.
Then the following limit is relatively easy:
\beno
 \lim_{\ve\to0}  \int_{\BS\times\RR^d\times \RR_+}\p_t f_\ve(m,x,t)\psi(m,x,t)\ud m\ud x\ud t={\gamma}\int_{\RR^d\times \RR_+}(\partial_t n\wedge n)\cdot \Theta(x)\varphi(t)\ud x\ud t
 \eeno
for some $\gamma=\gamma(\alpha)\neq 0$.
The main challenge  is to prove the following singular limit:
 \beno
 \lim_{\ve\to0}   \frac 1\ve\int_{\BS\times\RR^d\times \RR_+}\CR\cdot(f_\ve \CR \mu_\ve[f_\ve])\psi(m,x,t)\ud m\ud x\ud t
 ={{\gamma}}\Lambda\int_{\RR^d\times \RR_+} \partial_i\Theta\cdot( n\wedge\partial_in){\varphi(t)}\ud x\ud t,
 \eeno
 for some $\Lambda>0$.  To this end, we decompose the term on the left hand side  by
\begin{multline*}
\frac 1\ve\int_{\BS\times\RR^d\times \RR_+}\CR\cdot(f_\ve \CR \mu_\ve[f_\ve])\psi\ud m\ud x\ud t
={-} \frac 1\ve\int_{\BS\times\RR^d\times \RR_+} {\varphi(t)}  \mu_\ve[f_\ve] \Theta\cdot \CR f_\ve \ud m\ud x\ud t\\
+\frac 1\ve\int_{\BS\times\RR^d\times \RR_+}   \mu_\ve [f_\ve]\CR\cdot (f_\ve\CR\psi{+}{\varphi(t)}\Theta(x) f_\ve) \ud m\ud x\ud t.
\end{multline*}
The first part converges to  \[{{\gamma}}\Lambda\int_{\RR^d\times \RR_+} \partial_i\Theta(x)\cdot( n\wedge\partial_in){\varphi(t)}\ud x\ud t,\] as a consequence of the strong compactness for $Q[f_\ve]$.
The second part can be written as
\begin{align*}
\frac 1\ve\int_{\BS\times\RR^d\times \RR_+}& \mu_\ve[f_\ve]
  \CR\cdot \big(f_\ve(\CR\psi{+}{\varphi(t)}\Theta(x))\big)\ud m \ud x\ud t\\
&={-}\frac 1\ve\int_{\BS\times\RR^d\times \RR_+} \sqrt{f_\ve}\CR\mu_\ve [f_\ve]\cdot\big(\CR\psi{+}{\varphi(t)}\Theta(x)\big)\frac{f_\ve-f_0}{\sqrt{f_\ve}}\ud m \ud x\ud t.
\end{align*}
The key ingredient is to show  that this term vanishes as $\ve\to 0$  and this motivates the  Proposition \ref{prop:strong},
which is of independent interest for mean-field limit problems:  for every $T>0$ and compact set $W\subseteq \RR^d$,
\begin{equation}\label{eq:1.46}
  f_\ve  \xrightarrow{\ve\to 0} f_0\quad \text{strongly in}\,\, L^2\big(W\times \BS \times(0,T)\big).
\end{equation}
Motivated by \cite{GS,WZZ-cpam}, the proof is achieved by combining  the dissipation control in \eqref{eq:ME}
together with the coercive estimate of the linearized operator $\mathcal{G}_{f_0}$ as well as the micro-macro decomposition.
Note that the result of type \eqref{eq:1.46}  is not valid in general in hydrodynamic limit for Boltzmann equation.

\medskip

The rest  of the paper will be organized as follows.
In Section \ref{section:pre}, we will introduce some analytic results related to the Maier-Saupe energy.
In Section 3, we present some basic properties of the rotational operator $\CR$ and a nonlocal operator $\mathcal{L}_\ve$ defined via \eqref{def:L}. These properties will be employed repeatedly in the remainder of the work.
In Section 4, we derive the modulated energy estimate and present some uniform estimates for the solution of the Doi-Onsager equation.
In Section 5, we prove the compactness of the second moment via the control of the modulated energy.
In Section 6, we prove the strong compactness of $f_\ve$ via the dissipation control of the modulated energy and the micro-macro decomposition.
Section 7 is devoted to the proof of Theorem \ref{thm:converge}.

\section{The Maier-Saupe energy}\label{section:pre}

We first introduce some notation. For every $3\times 3$ symmetric matrix $M=\{M_{ij}\}_{1\leq i,j\leq 3}$,  the $j$-th row vector will be denoted by   $M^j=\{M_{ij}\}_{1\leq i\leq 3}$. For any two such matrix $M$ and $N$, their inner product will be defined via
 $M:N=M_{ij}N_{ij}$ under Einstein summation convention
and this induces the norm $|M|=\sqrt{M:M}$. When $i$ appears as superscript or subscript, it denotes an integer. On the other hand, we shall also use $i$ to denote $\sqrt{-1}$ when it is multiplied by some quantities.

\subsection{The interaction kernel of Maier-Saupe energy}

Recall that the inhomogeneous Maier-Saupe energy is defined by
\begin{equation}
\begin{split}
 { \mathcal{E}_\ve[f]=\int_{\RR^d\times\BS}\Big( f( x , m )\log{f( x , m )}
+\frac12f( x , m )\CU_\ve[f]( m , x )-\frac{E_0}{4\pi}\Big)\ud m\ud x}.\nonumber
\end{split}
\end{equation}
Here $E_0$  is defined at \eqref{eq:1.31} and is used for renormalization,
\begin{equation}\label{eq:1.12}
\CU_\ve[f]=\int_{\RR^d}\int_\BS{B}( x, m; x', m ')f( x ', m ')\ud x '\ud m'.
\end{equation}
In this paper, we will take the interaction kernel ${B}( x, m; x', m ')$ as follows
\begin{equation}\label{eq:1.14}
  B(x,m;x',m')=\alpha |m\wedge m'|^2k_\ve({x-x'})
\end{equation}
where $k_\ve(x):=\tfrac{1}{\sqrt{\ve}^d}k(\tfrac{ x }{\sqrt\ve})$.
  Since the interaction potential energy between molecules in consideration are nonnegative and isotropic,
it is quite natural to assume that $k(x)$ is a radial, nonnegative function and $\int_{\RR^d}k(x)\ud x =1$.
Furthermore, we  assume \eqref{eq:1.42}. The first assumption in \eqref{eq:1.42} is crucial to deduce the Oseen-Frank energy with bounded coefficients, see \cite{LiuWang2016,WZZ-cpam}.
On the other hand, we  deduce from it   the following  condition which will be employed in the proof of Theorem \ref{thm:converge} in the last section:
\begin{equation}\label{assump2}
 \lim_{\ve\to 0}\frac 1\ve\int_{|x|\geq \frac{\delta}{\sqrt{\ve}}}k(x)\ud x \le  \lim_{\ve\to 0} \frac{1}{\delta^2}\int_{|x|\geq \frac{\delta}{\sqrt{\ve}}}|x|^2k(x)\ud x  =0,~\forall\delta>0.
\end{equation}
If we denote by $\hat{k}(\xi)$ the Fourier transform of $k(x)$, i.e.,
\begin{equation*}
  \hat{k}(\xi)=\int_{\RR^d}k(x)e^{-2\pi i x\cdot \xi}\ud x,
\end{equation*}
then $\hat{k}(\xi)$ is also a radial real-valued function. Moreover, $|\hat{k}(\xi)|\le 1$, $\hat{k}\in W^{2,\infty}$ and
\begin{equation}\label{ftrans1}
  \hat{k}(0)=1 ,\quad \nabla\hat{k}(0)=0,\quad \nabla^2\hat{k}(0)=-\tfrac {4\pi^2\mu} d I_d.
\end{equation}
We note that the first two formula are obvious while for the last one,  using radial symmetry of $k(x)$, we have
\[\nabla^2\hat{k}(0)=-4\pi^2\int_{\RR^d} x\otimes x k(x)\ud x=\beta I_d\] for some $\beta\in\RR,$ and the result follows by taking the trace of the above formula.

The second  assumption in  \eqref{eq:1.42} implies that there is a constant $C_0$ such that $|(1+|\xi|)\hat{k}(\xi)|\le C_0$ for all $\xi\in\RR^d$,
which implies, for $|\xi|>2C_0$, $\hat{k}(\xi)\le 1/2$ and then $\frac{1-\hat{k}(\xi)}{|\xi|^2}\ge \frac 1{8C_0^2}$.
  On the other hand, we have $\lim_{\xi\to 0}\frac{1-\hat{k}(\xi)}{|\xi|^2}=\frac{2\pi^2\mu}{d}>0$ and $\hat{k}(\xi)<1$ for $|\xi|>0$.
Thus, the continuous function $\frac{1-\hat{k}}{|\xi|^2\hat{k}^2}$ is strictly positive for $|\xi|\le 2C_0$.
Consequently, there exists some $c_0>0$ such that
\begin{equation}\label{assump1}
c_0 |\xi|^2\hat{k}^2(\xi)\leq 1-\hat{k}(\xi) ,~\forall\xi\in\RR^d.
\end{equation}
We will use (\ref{assump2})-(\ref{assump1})  rather than  \eqref{eq:1.42} throughout the paper.

Apparently, there are many examples of $k(x)$ satisfying \eqref{eq:1.42} (and then \eqref{assump2}-\eqref{assump1}). For example,
$k(x)=\(\tfrac a\pi\)^{\frac d2}e^{-a |x|^2}~\text{with}~a\in(0,\pi)$ satisfies all conditions.
Actually, since $\hat{k}(\xi)=e^{-\frac{\pi^2 |\xi|^2}a}$, it is not difficult to see that \eqref{assump1} holds with $c_0\leq \frac{\pi^2}a$.
We also remark that our choice of $k$ here weaken the assumptions in our previous work \cite{LiuWang2016} on the static problem.

It is evident that $k_\ve(x):=\tfrac{1}{\sqrt{\ve}^d}k(\tfrac{ x }{\sqrt\ve})$ satisfies
\begin{equation}\label{eq:1.57}
\hat{k}_\ve (\xi)= \hat{k}(\sqrt{\ve}\xi),\quad \forall \xi\in\RR^d.
\end{equation}
Moreover, $k_\ve$ is a mollifier on $\RR^d$ in the sense that
\begin{equation}\label{L:converge}
  \|v*k_\ve\|_{L^p(\RR^d)}\leq \| k_\ve\|_{L^1(\RR^d)}\|v \|_{L^p(\RR^d)}, ~1\leq p\leq \infty\nonumber
\end{equation}
where $*$ denotes the convolution in $\RR^d$ and for every $v\in L^p(\RR^d)$ with $1\leq p<\infty$,
\begin{equation}
 v*k_\ve  \xrightarrow{\ve\to 0}v\quad \text{strongly in}\,\,L^p(\RR^d).\nonumber
\end{equation}

We shall often work with the traceless second moment $Q(\cdot)=Q[f](\cdot)$ of a number density function $f(\cdot,m)$ with $\int_\BS f(\cdot,m)\ud m=1$ and $f(\cdot,-m )=f(\cdot,m)$,
\begin{align*}
Q(\cdot)=Q[f](\cdot)=\int_\BS (m\otimes m-\frac13I_3) f(\cdot, m)\ud m.
\end{align*}
 Moreover, we define a non-local operator for $Q(x)$:
\begin{equation}\label{def:L}
\mathcal{L}_\ve Q=\frac 1\ve \(Q-Q* k_\ve\).
\end{equation}
According to \eqref{eq:1.12} and \eqref{eq:1.14}, it holds that
  \begin{align}\nonumber
    \CU_\ve[f](m,x,t)=&~\alpha\int_{\BS\times \RR^d} f(m',x',t)|m\wedge m'|^2 k_\ve(x-x')\ud x'\ud m'\\
    =&~\alpha\int_{\BS\times\RR^d} f(m',x',t) k_\ve(x-x')\ud x'\ud m'\nonumber\\ \nonumber
    &-\alpha m\otimes m:\int_{\BS\times\RR^d} m'\otimes m'f(m',x',t) k_\ve(x-x')\ud x'\ud m'\\
    =&~\alpha\(\tfrac 23-(m\otimes m):  Q[f]* k_\ve\). \label{eq:UQ}
  \end{align}
Here we used the fact that $\int_\BS f(m,x,t) \ud m=1$.
Similarly, we deduce from \eqref{ms-potential} that
\begin{equation}\label{eq:UQ0}
\CU_0[f]=\alpha\(\tfrac 23-(m\otimes m):Q[f]\).
\end{equation}
Therefore
\begin{equation}\label{relation:U}
  \tfrac 1\ve(\CU_\ve[f]-\CU_0[f])=\alpha(m\otimes m ):{\mathcal{L}_\ve Q[f]}.
\end{equation}

\subsection{Critical points and minimizers of the homogeneous Maier-Saupe energy}\label{subsection:equilibrium}
We recall some results on the critical points of the homogeneous Maier-Saupe energy:
\begin{equation}\label{energy:MS}
  \mathcal{E}_0[f]=\int_{\BS}\(f(  m)
\log f( m)+\tfrac{1}{2}\CU_0 [f]( m)f(  m )\)\ud m,
\end{equation}
where
\[ \CU_0[f]( m)=\alpha\int_{\BS}|m\wedge m'|^2f( m')\ud m'.\]
In view of \eqref{eq:UQ0}, we can also write (\ref{energy:MS}) as
\begin{equation} \label{energy:MS1}
 \mathcal{E}_0[f]=\int_{\BS}f(  m)
\log f( m)  \ud m+\frac \alpha 3-\frac{\alpha}{2}|Q [f]|^2.
\end{equation}
Various analytic results of \eqref{energy:MS} that will be employed in this work has been obtained in \cite{maiersaupe,fatkullin2005critical,MR2164198}.
To state these results, we define a monotonic increasing function $s_2:\RR\mapsto(-\tfrac 12, 1)$ by
\begin{align*}
s_2(\eta)=\frac{\int_{-1}^1(3x^2-1)e^{\eta x^2}\ud x  }{2\int_{-1}^1e^{\eta x^2}\ud x}.
\end{align*}
\begin{Lemma}\label{uniaxial1}
Every axially symmetric distribution $ h_{\nu}(m)=\frac{ e^{\eta(m\cdot \nu)^2}}{\int_\BS e^{\eta(m'\cdot \nu)^2}\ud m'}$ with given $\nu\in \BS, \eta\in\mathbb{R}$ satisfies
  \begin{equation}\label{eq:Qs2}
  Q[h_{\nu}]=s_2(\eta)\(\nu\otimes \nu-\tfrac 13 I_3\).
\end{equation}
Moreover, $s_2(\eta)$ and $\eta$ share the same sign.
\end{Lemma}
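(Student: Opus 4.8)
The plan is to reduce the tensor identity to a one–dimensional integral using rotational symmetry, and then settle the sign statement by a variance computation. First I would note that both sides of \eqref{eq:Qs2} are $SO(3)$–equivariant: for $R\in SO(3)$ one has $h_{R\nu}(m)=h_\nu(R^{\mathrm T}m)$, hence $Q[h_{R\nu}]=R\,Q[h_\nu]\,R^{\mathrm T}$, while $R\nu\otimes R\nu-\tfrac13I_3=R(\nu\otimes\nu-\tfrac13I_3)R^{\mathrm T}$. Thus it suffices to prove the claim for $\nu=e_3$.

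With $\nu=e_3$ the density $h_{e_3}(m)$ depends only on $m_3=m\cdot e_3$, hence is invariant under the group of rotations about the $e_3$–axis; since $\BS$ and $\ud m$ are invariant as well, the symmetric matrix $M:=\int_\BS m\otimes m\,h_{e_3}(m)\,\ud m$ commutes with all such rotations, so $e_3$ is an eigenvector and $M$ is a scalar multiple of the identity on the $e_1$–$e_2$ plane, i.e. $M=a\,I_3+(b-a)\,e_3\otimes e_3$ with $b=\int_\BS m_3^2\,h_{e_3}\,\ud m$; using $|m|^2=1$ and $\int_\BS h_{e_3}\,\ud m=1$ gives $\tr M=2a+b=1$, so $a=\tfrac{1-b}{2}$ and $Q[h_{e_3}]=M-\tfrac13I_3=\tfrac{3b-1}{2}\big(e_3\otimes e_3-\tfrac13I_3\big)$. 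It remains to identify $\tfrac{3b-1}{2}$ with $s_2(\eta)$: writing $m_3=x$ and integrating out the azimuthal angle, every surface integral over $\BS$ of a function of $m_3$ alone equals $2\pi$ times the corresponding integral over $x\in(-1,1)$, the factors $2\pi$ cancel in the ratio, so $b=\big(\int_{-1}^1 x^2e^{\eta x^2}\,\ud x\big)\big/\big(\int_{-1}^1 e^{\eta x^2}\,\ud x\big)$ and $\tfrac{3b-1}{2}=s_2(\eta)$ by definition. Combined with the equivariance reduction this proves \eqref{eq:Qs2}.

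For the sign statement, I would write $\langle\,\cdot\,\rangle_\eta$ for expectation under the probability density on $(-1,1)$ proportional to $e^{\eta x^2}$, so that $b(\eta)=\langle x^2\rangle_\eta$ and $s_2(\eta)=\tfrac32\langle x^2\rangle_\eta-\tfrac12$. Differentiating under the integral sign, $\tfrac{\ud b}{\ud\eta}=\langle x^4\rangle_\eta-\langle x^2\rangle_\eta^2=\mathrm{Var}_\eta(x^2)>0$ strictly, because $x^2$ is non-constant on the full-support interval $(-1,1)$; hence $s_2$ is strictly increasing. Since $\int_{-1}^1(3x^2-1)\,\ud x=0$ forces $s_2(0)=0$, strict monotonicity gives $s_2(\eta)>0$ for $\eta>0$ and $s_2(\eta)<0$ for $\eta<0$, i.e. $s_2(\eta)$ and $\eta$ share the same sign. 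The same monotonicity, together with the concentration of the density near $x=\pm1$ as $\eta\to+\infty$ and near $x=0$ as $\eta\to-\infty$, also recovers the range $s_2(\RR)=(-\tfrac12,1)$ asserted just before the lemma.

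Everything in this argument is elementary. The only step that is not purely mechanical is the monotonicity/sign claim, and there I would insist on the variance identity above rather than attempting to estimate the ratio of exponential integrals by hand, which is where a naive approach would bog down.
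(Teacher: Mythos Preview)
Your argument is correct. The tensor identity part follows the same route as the paper: reduce to $\nu=e_3$ by symmetry and compute the scalar moment $\int_\BS m_3^2\,h_{e_3}\,\ud m$; your equivariance and commutation-with-rotations phrasing is just a slightly more polished packaging of the paper's ``show the components of both sides are equal''.

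The sign statement is where the two proofs genuinely diverge. You show $s_2$ is strictly increasing via the variance identity $b'(\eta)=\mathrm{Var}_\eta(x^2)>0$ and then invoke $s_2(0)=0$. The paper instead integrates by parts once,
\[
\int_0^1 z(1-z^2)\,\ud\big(e^{\eta z^2}\big)+\int_0^1 e^{\eta z^2}\,\ud\big(z(1-z^2)\big)=0,
\]
to obtain the explicit rewriting
\[
s_2(\eta)=\frac{\eta\int_0^1(1-z^2)z^2 e^{\eta z^2}\,\ud z}{\int_0^1 e^{\eta z^2}\,\ud z},
\]
from which the sign is read off directly. The paper's route is shorter and yields a formula with $\eta$ as an explicit prefactor (useful if one later needs quantitative control in $\eta$), while your variance argument gives strict monotonicity of $s_2$ as a free byproduct, which in turn recovers the range $(-\tfrac12,1)$ without further work. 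Both are entirely elementary; neither dominates the other.
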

\begin{proof}
The proof can be found in \cite[Lemma 6.6]{WZZ-cpam}. For the convenience of the readers,  we   sketched  it here. Assuming $\nu=(0,0,1)^T$ without loss of generality, one can prove (\ref{eq:Qs2}) by showing the components of both sides are equal.  Moreover, from the  identity
\begin{align*}
\int_0^1z(1-z^2)\ud( e^{\eta z^2})+\int_0^1 e^{\eta z^2} \ud ( z(1-z^2))=e^{\eta z^2} z(1-z^2)|_0^1=0,
\end{align*}
we have
\begin{align*}
s_2(\eta)=\frac{\eta\int_0^1(1-z^2)z^2 e^{\eta z^2} \ud z}{\int_0^1  e^{\eta z^2} \ud z},
\end{align*}
which implies that $s_2(\eta)$ and $\eta$ have the same sign.
\end{proof}

In \cite{fatkullin2005critical,MR2164198},   all the  smooth  critical points of (\ref{energy:MS}) are characterized:
\begin{Proposition}\label{prop:critical}
All the  smooth  critical points of (\ref{energy:MS}) are given by
 \begin{equation*}
   h_\nu(m):=\frac{ e^{\eta(m\cdot \nu)^2}}{\int_\BS e^{\eta(m'\cdot \nu)^2}\ud m'}
 \end{equation*}
for  every given $\nu\in\BS$, where $\eta$ and $\alpha$ satisfies the following relation:
\beq\label{eta}
\eta=\alpha s_2(\eta).\nonumber
\eeq
For every $\alpha>0$, $\eta=0$ is  a solution of (\ref{eta}). In addition, defining
$$\alpha^*=\min_{\eta\in\mathbb{R} }\frac{\int_{-1}^1e^{\eta x^2}dx  }{\int_{-1}^1x^2(1-x^2)e^{\eta x^2}dx},$$
we have
\begin{enumerate}
  \item when $\alpha<\alpha^*$, $\eta = 0$ is the only solution of (\ref{eta});
  \item when $\alpha=\alpha^*$, besides $\eta = 0$ there is another solution $\eta=\eta^*$ of (\ref{eta});
  \item  when $\alpha>\alpha^*$, besides $\eta=0$ there are two solutions $\eta_1>\eta^*>\eta_2$ of (\ref{eta}).
\end{enumerate}
\end{Proposition}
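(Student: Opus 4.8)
The plan is to convert the variational characterization into an explicit self-consistent Boltzmann-type equation, then reduce to the axially symmetric case, and finally reduce everything to a scalar equation that can be analyzed directly.

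First I would take the first variation of $\mathcal{E}_0$ under the constraints $f>0$, $\int_\BS f\,\ud m=1$. Since the kernel $|m\wedge m'|^2$ is symmetric in $(m,m')$, one has $\frac{\delta}{\delta f}\big(\tfrac12 f\,\CU_0[f]\big)=\CU_0[f]$, so a smooth critical point satisfies $\log f+\CU_0[f]\equiv c$ on $\BS$ for some constant $c$. Inserting \eqref{eq:UQ0}, $\CU_0[f]=\alpha\big(\tfrac23-(m\otimes m):Q[f]\big)$, and absorbing all constants into the normalization, this is equivalent to
\[
f(m)=\frac1Z\,e^{\alpha\,(m\otimes m):Q[f]},\qquad Z=\int_\BS e^{\alpha\,(m\otimes m):Q[f]}\,\ud m,
\]
i.e.\ $f$ is the Gibbs density determined by its own traceless second moment $Q[f]$.

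Next, diagonalize $Q:=Q[f]=\sum_{i=1}^3\lambda_i\,e_i\otimes e_i$ with $\lambda_1+\lambda_2+\lambda_3=0$; in the eigenframe $f(m)=\frac1Z e^{\alpha(\lambda_1m_1^2+\lambda_2m_2^2+\lambda_3m_3^2)}$, and the self-consistency becomes the fixed-point system $\lambda_i=\int_\BS m_i^2 f\,\ud m-\tfrac13$, $i=1,2,3$. \emph{The heart of the matter, and the step I expect to be the main obstacle, is to show that any solution of this system has at least two equal eigenvalues}, so that $f$ is necessarily uniaxial. Writing the exponent modulo an additive constant as $\beta_1m_1^2+\beta_2m_2^2$ with $\beta_1\ge\beta_2\ge0$, the self-consistency becomes $\beta_i/\alpha=\langle m_i^2-m_3^2\rangle_\mu$ ($i=1,2$) for the Gibbs measure $\mu\propto e^{\beta_1m_1^2+\beta_2m_2^2}\,\ud m$; one then argues, through correlation/moment inequalities for $\mu$ (monotonicity and concavity of the moment map on the cone $\{\beta_1\ge\beta_2\ge0\}$), that no fixed point lies in the interior of that cone, forcing $\beta_1=\beta_2$ or $\beta_2=0$. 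This is exactly the classification of \cite{fatkullin2005critical,MR2164198}. Consequently $f=h_\nu$ for $\nu\in\BS$ the eigenvector of the simple eigenvalue and some $\eta\in\RR$, with $\eta=0$ precisely when $Q[f]=0$ (the isotropic state $f\equiv\tfrac1{4\pi}$).

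It remains to determine which $\eta$ occur. For $f=h_\nu$, Lemma \ref{uniaxial1} gives $Q[h_\nu]=s_2(\eta)\big(\nu\otimes\nu-\tfrac13I_3\big)$, hence $\alpha(m\otimes m):Q[h_\nu]=\alpha s_2(\eta)\big((m\cdot\nu)^2-\tfrac13\big)$; since $h_\nu$ and $\frac1Z e^{\alpha(m\otimes m):Q[h_\nu]}$ are both normalized probability densities, matching exponents forces $\eta=\alpha s_2(\eta)$, and conversely any $(\nu,\eta)$ with $\eta=\alpha s_2(\eta)$ gives a critical point $h_\nu$. For $\eta\neq0$ this is equivalent to $\alpha=G(\eta)$ where, by the identity obtained in the proof of Lemma \ref{uniaxial1},
\[
G(\eta):=\frac{\eta}{s_2(\eta)}=\frac{\int_{-1}^1 e^{\eta x^2}\,\ud x}{\int_{-1}^1 x^2(1-x^2)e^{\eta x^2}\,\ud x}.
\]
I would then check that $G$ is smooth and positive on $\RR$; that $G(\eta)\to+\infty$ as $\eta\to\pm\infty$ (Laplace asymptotics: as $\eta\to+\infty$ the weight $e^{\eta x^2}\ud x$ concentrates near $x=\pm1$, where $x^2(1-x^2)$ vanishes, giving $G(\eta)\sim\eta$, and as $\eta\to-\infty$ it concentrates near $x=0$, where $x^2$ vanishes, giving $G(\eta)\sim2|\eta|$); and that $G$ is unimodal, with a unique nondegenerate minimum at some $\eta^*$. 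This last point is the second technical ingredient: the sign of $G'$ equals that of a covariance-minus-variance combination of polynomials in $x$ against the weight $e^{\eta x^2}$, which one shows changes sign exactly once by sharp moment estimates — again carried out in \cite{fatkullin2005critical,MR2164198}. Setting $\alpha^*:=G(\eta^*)=\min_\eta G$, the strict monotonicity of $G$ on each side of $\eta^*$ combined with $G\to+\infty$ at $\pm\infty$ yields at once: no nonzero solution of $\eta=\alpha s_2(\eta)$ for $\alpha<\alpha^*$; exactly one, $\eta^*$, for $\alpha=\alpha^*$; and exactly two, $\eta_1>\eta^*>\eta_2$, for $\alpha>\alpha^*$. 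Together with the previous steps and the ever-present root $\eta=0$, this is the proposition.
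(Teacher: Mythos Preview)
Your outline is correct and follows the standard route from \cite{fatkullin2005critical,MR2164198}: Euler--Lagrange gives a Gibbs density determined by its own $Q$-tensor, diagonalization and moment inequalities force uniaxiality, and the scalar self-consistency $\eta=\alpha s_2(\eta)$ is then analyzed via the unimodality of $G(\eta)=\eta/s_2(\eta)$. Note, however, that the paper does not actually prove this proposition at all --- it simply quotes the result from \cite{fatkullin2005critical,MR2164198} without argument --- so there is no ``paper's own proof'' to compare against; your proposal is in effect a sketch of what those references do.
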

Furthermore, the stability/instability of critical points have also been clearly discussed.
\begin{Proposition} \label{prop:critical-stab}
Let $\alpha^*$ be the parameter defined above.
\begin{enumerate}
  \item   When $\alpha<\alpha^*$, $\eta=0$ is the only critical point. Thus, it is stable;
   \item When $\alpha^*\le \alpha<7.5$,  the solution corresponding to $\eta = 0$ and $\eta=\eta_1$ are both stable;
  \item  When $\alpha>7.5$, the solution corresponding to $\eta=\eta_1$ is the only stable solution.
\end{enumerate}
\end{Proposition}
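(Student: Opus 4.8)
\emph{Step 1: reduce to a finite-dimensional eigenvalue inequality.} By Proposition~\ref{prop:critical} every smooth critical point of $\mathcal{E}_0$ is an axially symmetric $h_\nu$ with $\eta=\alpha s_2(\eta)$, and $\mathcal{E}_0$ is $SO(3)$-invariant, so I would fix $\nu=e_3$ and decide whether $h_\nu$ is a local minimizer on $\{f:\int_\BS f\,\ud m=1\}$; a critical point is \emph{stable} exactly when its second variation is $\ge 0$ on mean-zero perturbations, \emph{unstable} when it is negative somewhere. Writing $\mathcal{E}_0[f]=\int_\BS f\log f\,\ud m+\tfrac\alpha3-\tfrac\alpha2|Q[f]|^2$ and expanding at a critical point, the first variation vanishes on $g$ with $\int_\BS g\,\ud m=0$ and $\delta^2\mathcal{E}_0[h_\nu](g,g)=\int_\BS g^2/h_\nu\,\ud m-\alpha|Q[g]|^2$. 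For fixed symmetric traceless $M$, a Lagrange-multiplier computation gives $\min\{\int_\BS g^2/h_\nu\,\ud m:\ \int_\BS g\,\ud m=0,\ Q[g]=M\}=M:\mathbb{M}_\nu^{-1}M$, where $\mathbb{M}_\nu:=\int_\BS h_\nu\,\bar\phi\otimes\bar\phi\,\ud m$, $\bar\phi(m):=m\otimes m-\tfrac13I_3-Q[h_\nu]$, is a positive-definite operator on the $5$-dimensional space $\mathrm{Sym}_0$ of symmetric traceless $3\times3$ matrices (positivity because $\Lambda:m\otimes m$ is nonconstant on $\BS$ for $\Lambda\in\mathrm{Sym}_0\setminus\{0\}$). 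Hence $h_\nu$ is stable iff $\alpha\,\mathbb{M}_\nu\preceq\mathrm{Id}$, i.e. $\lambda_{\max}(\mathbb{M}_\nu)\le1/\alpha$, and unstable iff $\lambda_{\max}(\mathbb{M}_\nu)>1/\alpha$.

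\emph{Step 2: diagonalize $\mathbb{M}_\nu$.} Since $h_\nu$ is invariant under rotations about $e_3$, $\mathbb{M}_\nu$ is block-diagonal for the decomposition of $\mathrm{Sym}_0$ into the axial line $\mathbb{R}\,\mathrm{diag}(-1,-1,2)$, the plane $\mathrm{span}\{e_1\otimes e_3+e_3\otimes e_1,\ e_2\otimes e_3+e_3\otimes e_2\}$, and the plane $\mathrm{span}\{e_1\otimes e_1-e_2\otimes e_2,\ e_1\otimes e_2+e_2\otimes e_1\}$, with eigenvalues $\mu_\parallel,\mu_1,\mu_2$ of multiplicities $1,2,2$. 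Writing $z=m\cdot e_3$, the $z$-marginal of $h_\nu$ is $\propto e^{\eta z^2}$ on $[-1,1]$; denoting by $\langle\cdot\rangle$ averaging against it, and using $s_2(\eta)=\tfrac32\langle z^2\rangle-\tfrac12$ together with the integration-by-parts identity behind Lemma~\ref{uniaxial1}, namely $s_2(\eta)=\eta\langle z^2(1-z^2)\rangle$, one has $1/\alpha=s_2(\eta)/\eta=\langle z^2(1-z^2)\rangle$ at any anisotropic critical point, and a short computation gives $\mu_\parallel(\eta)=\tfrac32\mathrm{Var}(z^2)=s_2'(\eta)$ and $\mu_2(\eta)=\tfrac14\langle(1-z^2)^2\rangle$, while $\mu_1(\eta)\equiv1/\alpha$ for $\eta\ne0$ is the rotational Goldstone mode (the orbit direction $\tfrac{\ud}{\ud\theta}|_0 h_{R_\theta\nu}$ lies in the second plane, and $\delta^2\mathcal{E}_0$ vanishes on it by $SO(3)$-invariance). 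At $\eta=0$ the distribution is constant and $\mathbb{M}_\nu=\tfrac{2}{15}\mathrm{Id}$ by isotropy, so all three eigenvalues equal $2/15$.

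\emph{Step 3: compare the eigenvalues with $1/\alpha$.} At $\eta=0$, stability holds iff $\alpha\le15/2=7.5$; since $T(\eta):=\langle z^2(1-z^2)\rangle$ satisfies $T'(0)>0$ we have $\alpha^*=1/\max_\eta T(\eta)<7.5$, which gives stability of the isotropic state in cases (1)--(2) and instability in case (3). On the prolate branch the elementary identity $(1-z^2)(1-5z^2)=\tfrac{\ud}{\ud z}[z(1-z^2)^2]$ and integration by parts give $\langle(1-z^2)(1-5z^2)\rangle=-2\eta\langle z^2(1-z^2)^2\rangle$, which for $\eta>0$ rearranges \emph{exactly} into $\mu_2(\eta)<1/\alpha$. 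For the axial mode, $\eta_1$ lies to the right of the minimizer $\eta^*$ of $\eta\mapsto\eta/s_2(\eta)$, so $\tfrac{\ud}{\ud\eta}(\eta/s_2)\ge0$ there, i.e. $s_2'(\eta_1)\le s_2(\eta_1)/\eta_1=1/\alpha$, hence $\mu_\parallel(\eta_1)\le1/\alpha$; together with $\mu_1=1/\alpha$, the maximal eigenvalue $1/\alpha$ is attained only on the Goldstone plane, so $h_\nu$ with $\eta=\eta_1$ is stable. Finally $\eta_2$ is unstable: when $\eta_2>0$ (the range $\alpha^*<\alpha<7.5$, where $\eta_2$ sits to the left of $\eta^*$ on the decreasing branch) one gets $s_2'(\eta_2)>1/\alpha$, i.e. $\mu_\parallel(\eta_2)>1/\alpha$; when $\eta_2<0$ (the range $\alpha>7.5$) the displayed identity now has $-2\eta_2>0$, forcing $\mu_2(\eta_2)>1/\alpha$; in both cases the second variation has a strictly negative direction. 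At the threshold $\alpha=\alpha^*$ one has $\eta_1=\eta^*$ with $s_2'(\eta^*)=1/\alpha^*$, so that critical point is marginally (non-strictly) stable, consistent with case (2).

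\emph{Main obstacle.} The computational heart is the biaxial comparison $\mu_2(\eta)$ versus $1/\alpha$; the clean route is to notice $(1-z^2)(1-5z^2)=\tfrac{\ud}{\ud z}[z(1-z^2)^2]$, which kills the boundary term under integration by parts and leaves a manifestly signed integral whose sign is that of $-\eta$. The remaining work is structural bookkeeping: the block decomposition of $\mathbb{M}_\nu$, the two identifications $\mu_\parallel=s_2'$ (tying the axial second variation to the scalar equation $\eta=\alpha s_2(\eta)$) and $\mu_1\equiv1/\alpha$ (the rotational Goldstone mode), and the shape analysis of $\eta\mapsto\eta/s_2(\eta)$ that locates $\eta_1,\eta_2$ relative to $\eta^*$, already used in Proposition~\ref{prop:critical}.
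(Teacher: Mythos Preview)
The paper does not actually prove Proposition~\ref{prop:critical-stab}; it is quoted from the literature (the sentence ``Furthermore, the stability/instability of critical points have also been clearly discussed'' followed by the statement, with attribution to \cite{fatkullin2005critical,MR2164198}). So there is no ``paper's own proof'' to match; your task is really to supply one.

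Your argument is correct and self-contained. The reduction in Step~1 to the finite-dimensional condition $\alpha\,\mathbb{M}_\nu\preceq\mathrm{Id}$ via constrained minimization of $\int g^2/h_\nu$ is valid (the minimizer is $g=h_\nu\,\Lambda{:}\bar\phi$ with $\mathbb{M}_\nu\Lambda=M$). The block diagonalization in Step~2 and the eigenvalue identifications $\mu_\parallel=s_2'(\eta)$, $\mu_1=\langle z^2(1-z^2)\rangle$, $\mu_2=\tfrac14\langle(1-z^2)^2\rangle$ all check out, as does the isotropic value $2/15$. In Step~3 your derivative identity $(1-z^2)(1-5z^2)=\tfrac{\ud}{\ud z}[z(1-z^2)^2]$ gives exactly $4\mu_2-4/\alpha=\langle(1-z^2)(1-5z^2)\rangle=-2\eta\langle z^2(1-z^2)^2\rangle$, settling the biaxial direction with the sign of $-\eta$; and the axial direction is tied to the monotonicity of $\eta\mapsto\eta/s_2(\eta)$ on either side of $\eta^*$, which is already implicit in the ``exactly two nonzero solutions'' clause of Proposition~\ref{prop:critical}. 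Your observation that $T'(0)>0$ forces $\alpha^*<7.5$ is also correct (a direct moment computation gives $T'(0)=4/315$).

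Compared with the route in the cited references, and with the way stability is used later in the paper (coercivity of $\CH_{f_0}$ on $(\ker\CG_{f_0})^\perp$, Lemma~\ref{lem:lowerbound}), your approach is genuinely different in packaging: instead of expanding perturbations in spherical harmonics and analyzing an infinite family of one-dimensional integral inequalities, you observe that the non-entropic part of the second variation depends on $g$ only through $Q[g]$, so the whole problem collapses to a $5\times5$ eigenvalue comparison. This is shorter and more explicit; the harmonic-expansion route, on the other hand, yields directly the operator-level statement (positivity of $\CH_{f_0}$ on the orthogonal complement of the kernel) that the paper actually needs in Section~6. Both approaches rest on the same integration-by-parts identities against $e^{\eta z^2}$.
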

As a consequence of the above results,  we shall choose $\alpha>7.5$ and define
\begin{equation}\label{eq:1.58}
  \eta=\eta_1(\alpha),\quad S_2=s_2(\eta_1(\alpha))
\end{equation}
throughout  this paper.
  In addition, we denote for any $\nu\in\BS$
 \begin{equation}\label{uniaxial}
   h_{\nu}(m):=\frac{1}{Z} e^{\eta(m\cdot \nu)^2},
 \end{equation}
 where $Z=\int_\BS e^{\eta(m\cdot \nu)^2}\ud m$ is a constant independent of $\nu$.
 As remarked in the introduction, the distributions $h_{\nu}$ play analogous  roles that local Maxwellians do  in the hydrodynamic limit of Boltzmann equation.

The following lemma  shows that $h_{\nu}(m)$ are the only global  minimizers of the Maier-Saupe energy (\ref{energy:MS}) in $L^1(\BS)$ when $\alpha>7.5$.
\begin{Lemma}\label{lem:minimizer}
For $\alpha>7.5$, the global minimizers of \eqref{energy:MS} in the function class
\begin{equation}\label{eq:1.69}
\mathscr{H}:=\left\{f\in L^1(\BS)~\mid ,~f\geq 0~\text{a.e. on}~\BS,~\|f\|_{L^1(\BS)}=1\right\}
\end{equation}
are achieved only by  the distributions $h_\nu(\forall\nu\in\BS)$ in \eqref{uniaxial}.
 \end{Lemma}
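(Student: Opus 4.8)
The plan is to reduce the problem to a one-dimensional variational question via the observation, already recorded in \eqref{energy:MS1}, that
\[
\mathcal{E}_0[f]=\int_{\BS}f\log f\,\ud m+\frac\alpha3-\frac\alpha2|Q[f]|^2 ,
\]
so that the interaction term depends on $f$ only through its traceless second moment $Q[f]$. First I would split the minimization into an inner and an outer problem: fix a symmetric traceless matrix $Q_0$ and minimize $\int_{\BS}f\log f\,\ud m$ over all $f\in\mathscr H$ with $Q[f]=Q_0$ (and, by the symmetry hypothesis built into the definition of $Q[\cdot]$, over $f$ even in $m$); then minimize the resulting value plus $\frac\alpha3-\frac\alpha2|Q_0|^2$ over $Q_0$. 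The inner problem is a strictly convex minimization of the entropy under linear constraints, so its unique solution is the Gibbs state obtained by Lagrange multipliers, namely $f=\frac1{Z}e^{\Lambda:m\otimes m}$ for a symmetric traceless multiplier matrix $\Lambda$ (the trace part being absorbed into $Z$). Diagonalizing $\Lambda$, one sees every such minimizer is a Bingham-type distribution, and the candidate family $h_\nu$ corresponds to the uniaxial (two equal eigenvalues) case.

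The heart of the matter is then the outer problem: to show that among all these Gibbs states the minimum of $\mathcal{E}_0$ is attained only at the uniaxial ones $h_\nu$ with $\eta=\eta_1(\alpha)$. Here I would invoke exactly the classification already available in the excerpt. By Proposition \ref{prop:critical}, every smooth critical point of $\mathcal{E}_0$ on $\mathscr H$ is of the form $h_\nu$ with $\eta$ solving $\eta=\alpha s_2(\eta)$; a global minimizer in $\mathscr H$ is in particular a critical point (the Euler--Lagrange equation $\log f+\CU_0[f]=\text{const}$ forces $f$ to be one of these Gibbs states, which is automatically smooth), so the only possible minimizers are $h_\nu$ for $\eta\in\{0,\eta_1,\eta_2\}$. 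Comparing the finitely many values $\mathcal{E}_0[h_\nu]$ — note these depend only on $\eta$, not on $\nu\in\BS$, by rotational invariance of the sphere measure — and using Proposition \ref{prop:critical-stab}, which for $\alpha>7.5$ identifies $\eta=\eta_1$ as the unique stable critical point (hence the isotropic state $\eta=0$ and the state $\eta=\eta_2$ have strictly larger energy), one concludes the infimum equals $\mathcal{E}_0[h_{\nu}]$ with $\eta=\eta_1$, attained precisely on the orbit $\{h_\nu:\nu\in\BS\}$.

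Two points require care, and I expect the compactness/existence step to be the main obstacle. First, one must guarantee a minimizer exists in $\mathscr H$ before invoking the Euler--Lagrange equation; for this I would take a minimizing sequence $f_n$, use the entropy bound together with the Dunford--Pettis criterion to extract a weakly-$L^1$ convergent subsequence, check that $\int f\log f$ is weakly lower semicontinuous while $Q[f_n]\to Q[f]$ (the second-moment functional is weakly continuous against the bounded test function $m\otimes m-\frac13 I_3$), and that the mass and nonnegativity constraints pass to the limit; the term $-\frac\alpha2|Q|^2$ is then continuous, so $\mathcal E_0$ is lower semicontinuous and the limit is a minimizer. Second, one must rule out degenerate Gibbs states where $\Lambda$ is genuinely biaxial — this is precisely what the cited classification in \cite{fatkullin2005critical,MR2164198} settles, and I would simply quote it rather than reprove it. Finally, uniqueness up to the choice of $\nu$ follows because any minimizer must be a critical point with $\eta=\eta_1$, and by Lemma \ref{uniaxial1} its $Q$-tensor then determines $\nu$ up to sign, which does not change $h_\nu$. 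The strict inequality $\mathcal E_0[h_\nu]<\mathcal E_0[g]$ for every non-uniaxial or wrongly-scaled $g\in\mathscr H$ is then immediate from the finiteness of the critical set and the stability ordering.
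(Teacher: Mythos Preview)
Your proposal is correct and follows essentially the same route as the paper. Both arguments hinge on the splitting \eqref{energy:MS1} to reduce the problem to minimizing the entropy at fixed $Q$-tensor, identify the resulting minimizers as Bingham-type states $f_Q=\tfrac{1}{Z}e^{B(Q):m\otimes m}$ (the paper quotes \cite{maiersaupe,HLWZZ} for this, you derive it via Lagrange multipliers), and then appeal to Propositions~\ref{prop:critical} and~\ref{prop:critical-stab} to conclude that for $\alpha>7.5$ the only stable critical point---hence the only possible global minimizer---is $h_\nu$ with $\eta=\eta_1$.
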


 \begin{proof}
 The existence of global minimizers follows from the direct method in calculus of variations.
 It remains to show that they are smooth and bounded away from zero and are consequently stable smooth critical points. This together with Proposition \ref{prop:critical} and Proposition \ref{prop:critical-stab} lead to the desired result.

  For any $f\in\mathscr{H}$, the eigenvalues of $Q[f]$ lie in $(-1/3,2/3)$. So it follows from \cite{maiersaupe,HLWZZ} that there exists a traceless symmetric matrix $B(Q)$ such that the probability density defined by
 \begin{equation}\label{modify001}
 f_Q(m):= \frac{e^{B(Q):m\otimes m}}{\int_\BS e^{B(Q):m\otimes m}\ud m}\in \mathscr{H}
 \end{equation}
 satisfies $Q[f_Q]=Q[f]$ and
 \begin{equation*}
 \int_\BS f\log f\ud m\geq \int_\BS f_Q\log f_Q\ud m.
 \end{equation*}
  Together with formula \eqref{energy:MS1}, we infer that $\mathcal{E}_0[f]\geq \mathcal{E}_0[f_Q]$.
   So we have shown that the global minimizers must have the form \eqref{modify001}.
\end{proof}

 We end up this section by  the following compactness result for the sequence of functions with finite entropy. See for instance  \cite{MR2197021} for details of the proof.
\begin{Lemma}\label{lem:convex}
For any {bounded domain} $\O\subset\RR^d$, let $$f_k\in\mathscr{H}(\O):=\left\{f\in L^1(\BS\times\Omega),f(x,m)\geq 0,\|f(\cdot,x)\|_{L^1(\BS)}=1,~a.e.~ x\in\Omega\right\}$$ be a sequence of functions such that
   \begin{equation*}
     \int_{\Omega\times\BS}f_k
\log f_k~ <\infty~\text{uniformly for}~k\in\mathbb{N}^*.
   \end{equation*}
    Then modulo the extraction of a subsequence,   there exists $f\in \mathscr{H}(\O)$ such that $f_k\rightharpoonup f$ weakly in $L^1(\BS\times\Omega)$ and
  \begin{equation*}
  \int_{\Omega\times\BS} f\log f\ud x\ud m
\leq\liminf_{k \to \infty}\int_{\Omega\times\BS}f_k\log f_k\ud x\ud m.
  \end{equation*}
\end{Lemma}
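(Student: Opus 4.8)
The plan is to obtain the weak compactness from the Dunford--Pettis theorem (using that $t\mapsto t\log t$ is superlinear at infinity), to check that the two constraints defining $\mathscr{H}(\O)$ survive the weak limit, and to derive the lower semicontinuity of the entropy from the convexity of $t\mapsto t\log t$. First I would normalize the bounds: since $t\log t\geq -1/e$ for $t\geq 0$ and $|\Omega\times\BS|<\infty$ (as $\Omega$ is bounded and $\BS$ compact), the uniform bound $\int_{\Omega\times\BS}f_k\log f_k\leq C$ upgrades to $\sup_k\int_{\Omega\times\BS}|f_k\log f_k|\leq C+|\Omega\times\BS|/e$; moreover $\|f_k\|_{L^1(\Omega\times\BS)}=\int_\Omega\big(\int_\BS f_k(x,m)\ud m\big)\ud x=|\Omega|$. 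The function $\Phi(t):=t\log_+t$ is nonnegative, nondecreasing and convex on $[0,\infty)$ with $\Phi(t)/t\to\infty$, and differs from $t\log t$ by a quantity bounded uniformly on $[0,\infty)$, so $\sup_k\int_{\Omega\times\BS}\Phi(f_k)<\infty$. By the de la Vall\'ee Poussin criterion the family $\{f_k\}$ is uniformly integrable, and being $L^1$-bounded it is relatively weakly sequentially compact in $L^1(\Omega\times\BS)$ by Dunford--Pettis; I extract a subsequence (not relabeled) with $f_k\rightharpoonup f$ weakly in $L^1(\Omega\times\BS)$.

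Next I would verify $f\in\mathscr{H}(\O)$. Nonnegativity of $f$ is immediate because the convex set $\{g\in L^1:g\geq0\ \text{a.e.}\}$ is strongly closed, hence weakly closed (Mazur's lemma). For the normalization I test the weak convergence against an arbitrary $\varphi\in L^\infty(\Omega)$, regarded as an element of $L^\infty(\Omega\times\BS)$ independent of $m$: one gets $\int_\Omega\varphi(x)\big(\int_\BS f(x,m)\ud m\big)\ud x=\lim_k\int_{\Omega\times\BS}\varphi(x)f_k\,\ud m\,\ud x=\int_\Omega\varphi(x)\,\ud x$, where the last equality uses $\int_\BS f_k(x,m)\ud m=1$; since $\varphi$ is arbitrary this forces $\int_\BS f(x,m)\ud m=1$ for a.e.\ $x\in\Omega$.

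Finally, for the entropy inequality I would use convexity rather than Fatou directly. The functional $g\mapsto\int_{\Omega\times\BS}g\log g$ on $L^1(\Omega\times\BS)$, which is finite below since $t\log t\geq-1/e$, is convex because $t\mapsto t\log t$ is, and it is strongly lower semicontinuous (along a strongly convergent sequence, pass to an a.e.\ convergent subsequence and apply Fatou to $g_n\log g_n+1/e\geq0$). A convex, strongly lower semicontinuous functional on a Banach space is weakly lower semicontinuous, since its epigraph is convex and strongly closed, hence weakly closed by Hahn--Banach separation; therefore $\int_{\Omega\times\BS}f\log f\leq\liminf_k\int_{\Omega\times\BS}f_k\log f_k$. (Alternatively one can argue via the pointwise duality $a\log a=\sup_{b>0}(a\log b-b+a)$: for each bounded $g$ with $\inf g>0$ one has $\int f_k\log f_k\geq\int f_k\log g-\int g+|\Omega|\to\int f\log g-\int g+|\Omega|$, and supremizing over such $g$ recovers $\int f\log f$.) The only point requiring care is precisely that weak $L^1$ convergence does not yield a.e.\ convergence, so Fatou cannot be applied to $f_k$ itself; the convexity argument circumvents this. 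Everything else — the de la Vall\'ee Poussin estimate and the passage to the limit in the constraints — is routine.
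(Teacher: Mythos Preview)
Your argument is correct and is the standard route to this result: de la Vall\'ee Poussin/Dunford--Pettis for weak $L^1$ compactness, Mazur for the preservation of the convex constraints, and convexity plus strong lower semicontinuity to obtain weak lower semicontinuity of the entropy. The paper itself does not give a proof of this lemma; it simply refers the reader to Golse--Saint-Raymond \cite{MR2197021}, where exactly this type of argument is carried out in the Boltzmann setting. One cosmetic point: when you upgrade the entropy bound to $\sup_k\int|f_k\log f_k|<\infty$, the constant should be $C+2|\Omega\times\BS|/e$ rather than $C+|\Omega\times\BS|/e$ (write $|a|=a+2a^-$), but this has no bearing on the argument since for de la Vall\'ee Poussin you only use the bound on $\int\Phi(f_k)$ anyway.
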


\section{Basic properties of two operators $\CR$ and $\mathcal{L}_\ve$}
  {In what follows, we adopt Einstein summation convention by summing over repeated latin index. In various estimates in the sequel, $C$ will be a generic positive constant which might change from line to line and will be  independent of $\ve$ unless otherwise specified.}
\subsection{Rotational gradient operator $\CR$}
 We first give some basic properties for the rotational gradient operator on the unit sphere $\BS$, which is defined by
$$\mathcal{R}=m \wedge\nabla_{ m },$$
where {$\nabla_m$ is the restriction of standard gradient $\nabla$ on $\BS$}.
Under the spherical coordinate  on $\BS$ with $m=(\sin\theta\cos\phi, \sin\theta\sin\phi, \cos\theta)$, $\mathcal{R}$ can be written explicitly as
\begin{equation}\label{eq:1.75}
  \begin{split}
    \CR=&(-\sin\phi\mathbf{e}_1+\cos\phi\mathbf{e}_2)\partial_\theta
-(\cos\theta\cos\phi\mathbf{e}_1+\cos\theta\sin\phi\mathbf{e_2}-\sin\theta\mathbf{e}_3)
\frac{1}{\sin\theta}\partial_\phi\\
\triangleq&\ee_1{\CR}_1+\ee_2{\CR_2}+\ee_3{\CR_3}.
  \end{split}
\end{equation}
It is straightforward  to verify the following two properties for $\CR$:
\begin{align}\label{identity:R-1}
&  \int_{\BS}\mathcal{R} f_1 f_2 \ud m =-\int_{\BS}f_1\mathcal{R} f_2 \ud m,\\
 & \CR_im_j=-\varepsilon^{ijk}m_k,\qquad \CR\cdot\CR=\Delta_\BS \label{identity:R-2}
\end{align}
where  $\varepsilon^{ijk}$ is the Levi-Civita symbol. Consequently, we can derive from (\ref{identity:R-2}) that
\begin{align}\label{Appd:eq:R1}
 &   \CR(m\cdot u)=m\wedge u,~\CR\cdot(m\wedge u)=-2m\cdot u,\\
  &  \CR(B:m\otimes m)=2 m\wedge(B\cdot m) \label{Appd:eq:R2},\\
  &\Delta_\BS (m\otimes m)=-6\big(m\otimes m-\tfrac13I_3\big)\label{Appd:eq:R3}
\end{align}
for every constant vector $u\in \RR^d$ and  symmetric  matrix $B$.

We infer from (\ref{Appd:eq:R2})-(\ref{Appd:eq:R3}) and \eqref{eq:UQ} that, if $f=f(\cdot,m)$ fulfills $\int_\BS f(\cdot,m)\ud m=1$, then
\begin{align}\label{eq:RU}
  \CR \CU_\ve[f]=&~-\alpha\CR \((m\otimes m):  Q[f]* k_\ve\)=\{-2\alpha m_k m_j\varepsilon^{ki\ell}Q_{ij}[f]*k_\ve\}_{1\leq \ell\leq 3},\\
    \Delta_{\BS} \CU_\ve[f]=&~6\alpha(m\otimes m-\tfrac13\mathbb{ I}_3):  Q[f]* k_\ve. \label{eq:R2U}
\end{align}
In addition, for  $f_0(m)=\frac{1}{Z} e^{\eta(m\cdot n)^2}$, we have
\begin{align}\label{eq:Rf}
\CR f_0=f_0\CR(\log f_0)=\eta f_0\CR (m\cdot n)^2 =2 \eta (m\wedge n)(m\cdot n)f_0.
\end{align}

\subsection{Nonlocal operator $\mathcal{L}_\ve$}

 For any function $u \in L^2(\RR^d)$, we define
\begin{equation}\label{eq:1.34}
  \mathcal{L}_\ve u=\frac 1\ve (u-u*k_\ve).
\end{equation}
Apparently, $ \mathcal{L}_\ve $ is a bounded operator from $L^2(\RR^d)$ to $L^2(\RR^d)$ with operator norm depending on $\ve$.
In addition, $\mathcal{L}_\ve$ is a multiplier operator with non-negative symbol
\begin{equation*}
\widehat{\mathcal{L}_\ve u}(\xi)=\frac{\hat{k}(0)-\hat{k}(\sqrt{\ve}\xi)}{\ve}  \hat{u}(\xi),\quad \forall \xi\in\RR^d.
\end{equation*}
Actually it follows from  \eqref{assump1} that $\hat{k}(0)-\hat{k}( \xi)\geq 0$ for any $\xi\in\RR^d$. As a result, we can define $h(\xi)$ as
\begin{equation}\label{lipschitz}
h(\xi):= \left\{\begin{array}{rl}
  \xi\sqrt{\frac{\hat{k}(0)-\hat{k}( \xi)}{  |\xi|^2}  },&\xi\in\RR^d\backslash\{ 0\},\\
  0,& \xi=0.
\end{array}\right.
\end{equation}

\begin{Lemma}\label{lipschitz1}
  The function $h(\xi)$ defined by \eqref{lipschitz} is globally Lipschitz in $\RR^d$.
\end{Lemma}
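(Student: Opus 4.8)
\textbf{Proof plan for Lemma \ref{lipschitz1}.}
The plan is to show that $h$ is Lipschitz by establishing a uniform bound on $|\nabla h|$ away from the origin and then verifying continuity of the gradient (or directly the Lipschitz estimate) at $\xi=0$. First I would set $g(\xi):=\frac{\hat k(0)-\hat k(\xi)}{|\xi|^2}=\frac{1-\hat k(\xi)}{|\xi|^2}$ for $\xi\neq0$ and $g(0):=\frac{2\pi^2\mu}{d}$, so that $h(\xi)=\xi\sqrt{g(\xi)}$. From $|x|^2k(x)\in L^1(\RR^d)$ we get $\hat k\in C^2(\RR^d)$ with bounded second derivatives, and Taylor expansion together with \eqref{ftrans1} gives $1-\hat k(\xi)=\tfrac{2\pi^2\mu}{d}|\xi|^2+o(|\xi|^2)$ near $0$, so $g$ extends continuously to $\xi=0$ with $g(0)>0$. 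Moreover $g$ is bounded: for $|\xi|$ large we use $|\hat k(\xi)|\le1$ to get $g(\xi)\le 2/|\xi|^2$, and on compact sets $g$ is continuous and positive, so $0<c\le g(\xi)\le C$ uniformly. Since $\hat k\in W^{2,\infty}$, $g$ is in fact $C^1$ away from $0$ (quotient of $C^1$ functions with nonvanishing denominator), and near $0$ one checks $g\in C^1$ as well using the third-order control, or more cheaply one notes that $\nabla g$ is bounded near $0$ from the Taylor remainder estimates; since $\sqrt{\cdot}$ is smooth on $(0,\infty)$ and $g$ is bounded below by $c>0$, the composition $\sqrt{g}$ is bounded with bounded gradient.

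The key computation is then $\nabla h=\sqrt{g(\xi)}\,I_d+\xi\otimes\frac{\nabla g(\xi)}{2\sqrt{g(\xi)}}$, which is bounded on any annulus. To control it globally one must be careful for $|\xi|$ large, where the factor $|\xi|$ multiplying $\nabla g$ could in principle blow up; here I would use that $\nabla g(\xi)=\frac{-\nabla\hat k(\xi)}{|\xi|^2}-\frac{2(1-\hat k(\xi))\xi}{|\xi|^4}$, and from the second assumption in \eqref{eq:1.42} (i.e. $\nabla k\in L^1$, giving $|\nabla\hat k(\xi)|\le C$, indeed with the decay $|(1+|\xi|)\hat k(\xi)|\le C_0$ quoted after \eqref{assump1} one even has better) we obtain $|\xi||\nabla g(\xi)|\le C/|\xi|\to0$, so the second term in $\nabla h$ is bounded for $|\xi|$ large. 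Combining the annulus bound, the large-$|\xi|$ bound, and the bound near $0$, we conclude $\|\nabla h\|_{L^\infty(\RR^d\setminus\{0\})}\le C$; continuity of $h$ at the origin together with this bound yields the global Lipschitz property by the mean value theorem along segments.

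The main obstacle I anticipate is the behavior near $\xi=0$, where the square root is composed with a function that vanishes quadratically in the original ratio: one has to be sure that $g$ stays bounded \emph{below} by a positive constant there, which is exactly where $\nabla^2\hat k(0)=-\tfrac{4\pi^2\mu}{d}I_d$ with $\mu>0$ enters — this is guaranteed by $|x|^2k(x)\in L^1$ and $k\ge0$ radial, $k\not\equiv0$. A secondary technical point is justifying $C^1$ regularity of $\sqrt{g}$ right at the origin (as opposed to mere boundedness of a one-sided difference quotient); if a fully rigorous $C^1$ statement at $0$ is awkward, it suffices to argue the Lipschitz bound directly: for $\xi$ near $0$ write $h(\xi)-h(0)=h(\xi)=\xi\sqrt{g(\xi)}$ and use $|\sqrt{g(\xi)}-\sqrt{g(0)}|\le C|g(\xi)-g(0)|\le C|\xi|$ (from the Taylor remainder) to get $|h(\xi)-h(\eta)|\le C|\xi-\eta|$ for $\xi,\eta$ both small, which patches together with the interior estimate to finish the proof.
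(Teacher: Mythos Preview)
Your overall strategy---compute $\nabla h$ and bound it pointwise by splitting into the regimes near $0$, on an intermediate annulus, and at infinity---is the same as the paper's. However, there is a genuine gap in your treatment at infinity. You assert that ``$0<c\le g(\xi)\le C$ uniformly'' and later invoke ``$g$ is bounded below by $c>0$,'' but this is false: since $|\hat k(\xi)|\le 1$ and $\hat k(\xi)\to 0$, one has $g(\xi)=(1-\hat k(\xi))/|\xi|^2\to 0$ as $|\xi|\to\infty$. Consequently, when you bound the second term of $\nabla h=\sqrt{g}\,I_d+\xi\otimes\frac{\nabla g}{2\sqrt g}$ for large $|\xi|$, showing $|\xi|\,|\nabla g(\xi)|\le C/|\xi|\to 0$ is not enough by itself: the factor $1/\sqrt{g(\xi)}$ blows up like $|\xi|$, and you have not accounted for it.

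The fix is short: for $|\xi|$ large one has $1-\hat k(\xi)\ge 1/2$, hence $\sqrt{g(\xi)}\ge (2|\xi|^2)^{-1/2}$, and then $|\xi|\,|\nabla g(\xi)|/\sqrt{g(\xi)}\le C$ follows. Alternatively, the paper avoids this issue by writing the gradient with $\sqrt{1-\hat k(\xi)}$ (rather than $\sqrt{g}$) in the denominator, which stays bounded away from zero at infinity; the price is that one must then check boundedness near $\xi=0$, where $\sqrt{1-\hat k(\xi)}\sim c|\xi|$ compensates the explicit factor $\xi/|\xi|$. Either bookkeeping works, but as written your argument is incomplete at infinity.
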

\begin{proof}
 It follows from \eqref{ftrans1} that  $h(\xi)$ is continuous at $\xi=0$ since $\lim_{\xi\to 0} h(\xi)=0$. On the other hand, $h(\xi)$ is smooth in $\RR^d\backslash\{0\}$ and decays to zero when $\xi\to \infty$. So $h\in L^\infty(\RR^d)\cap C(\RR^d)$.
 We compute the derivative of $h$ by
 \begin{equation*}
   \begin{split}
     \nabla h(\xi)=\mathbb{I}_d\sqrt{\frac{1-\hat{k}(\xi)}{|\xi|^2}}-\frac{\xi}{2\sqrt{ 1-\hat{k}(\xi) }} \otimes\(\frac{\nabla\hat{k}(\xi)}{|\xi|}+\frac {2\xi}{|\xi|^3}(1-\hat{k}(\xi))\)=\sum_{k=1}^3 A_i(\xi),\forall \xi\neq 0.
   \end{split}
 \end{equation*}
 It is evident that $A_1,A_3\in L^\infty(\RR^d)\cap C(\RR^d)$. Moreover, $A_2\in L^\infty(B_1)\cap C^\infty(\RR^d\backslash B_1)$ and tends to zero as $\xi\to \infty$. These all together imply the statement.
\end{proof}
Therefore, we can decompose $\mathcal{L}_\ve$ as square of two first-order   vector-valued operator $\mathcal{T}_\ve=\{\mathcal{T}^i_\ve\}_{1\leq i\leq d}$ defined by
\begin{equation}\label{defteps}
\widehat{\mathcal{T}_\ve u}(\xi)
=\xi\sqrt{\frac{\hat{k}(0)-\hat{k}(\sqrt{\ve}\xi)}{\ve |\xi|^2}  } \hat{u}(\xi)=\frac1{\sqrt\ve} h(\sqrt\ve \xi)\hat{u}(\xi).
\end{equation}

\begin{Lemma}\label{converge-T}
The operator $\mathcal{L}_\ve$ and $\mathcal{T}_\ve$ are bounded from $L^2(\RR^d)$ to $L^2(\RR^d)$ with operator norm depending on $\ve$ and
\begin{equation}\label{squrtroot}
   \mathcal{L}_\ve  =\sum_{  k=1}^d\mathcal{T}^k_\ve \cdot \mathcal{T}^k_\ve .
 \end{equation}
  Moreover, for every $u\in H^1(\RR^d)$,   it holds
\begin{equation*}
  \mathcal{T}_\ve u \xrightarrow{\ve\to 0} -i{\sqrt{\tfrac \mu {2d}}} \nabla u\quad \text{in }L^2(\RR^d).
\end{equation*}
\end{Lemma}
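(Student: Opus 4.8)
The statement has two parts: the operator identity $\mathcal{L}_\ve=\sum_k\mathcal{T}^k_\ve\cdot\mathcal{T}^k_\ve$ together with the $L^2$-boundedness, and the convergence $\mathcal{T}_\ve u\to -i\sqrt{\mu/(2d)}\,\nabla u$ in $L^2$ for $u\in H^1$. Everything is cleanest on the Fourier side, so the plan is to work there throughout. For boundedness: by Plancherel, $\|\mathcal{L}_\ve u\|_{L^2}=\|\widehat{\mathcal{L}_\ve u}\|_{L^2}$ and the multiplier of $\mathcal{L}_\ve$ is $\ve^{-1}(\hat k(0)-\hat k(\sqrt\ve\xi))$, which is bounded (by $2/\ve$, or more carefully using $\hat k\in W^{2,\infty}$ near zero and $|\hat k|\le 1$) with an $\ve$-dependent bound; similarly the multiplier of $\mathcal{T}^k_\ve$ is $\ve^{-1/2}h_k(\sqrt\ve\xi)$, bounded since $h\in L^\infty$ by Lemma \ref{lipschitz1}. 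For the square identity: multiply the symbols. The symbol of $\sum_k\mathcal{T}^k_\ve\cdot\mathcal{T}^k_\ve$ is $\sum_k\bigl(\ve^{-1/2}h_k(\sqrt\ve\xi)\bigr)^2=\ve^{-1}|h(\sqrt\ve\xi)|^2=\ve^{-1}\bigl(\hat k(0)-\hat k(\sqrt\ve\xi)\bigr)$ by the very definition \eqref{lipschitz} of $h$, which is exactly the symbol of $\mathcal{L}_\ve$; hence the operators agree.

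\textbf{The convergence.} Write $\widehat{\mathcal{T}_\ve u}(\xi)=\ve^{-1/2}h(\sqrt\ve\xi)\hat u(\xi)$. I would like to compare this with the symbol $-i\sqrt{\mu/(2d)}\,\xi$ of $-i\sqrt{\mu/(2d)}\,\nabla$. The pointwise limit is immediate: from \eqref{ftrans1}, $\hat k(\xi)=1-\frac{2\pi^2\mu}{d}|\xi|^2+o(|\xi|^2)$ near $0$, so $\ve^{-1}\bigl(\hat k(0)-\hat k(\sqrt\ve\xi)\bigr)=\frac{2\pi^2\mu}{d}|\xi|^2+o(1)$ as $\ve\to0$ for fixed $\xi$, whence $\ve^{-1/2}h(\sqrt\ve\xi)=\frac{h(\sqrt\ve\xi)}{\sqrt\ve|\xi|}\cdot\xi\to\sqrt{\tfrac{2\pi^2\mu}{d}}\,\xi$. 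To reconcile the constant $\sqrt{2\pi^2\mu/d}$ with the claimed $\sqrt{\mu/(2d)}$ one must absorb the $2\pi$ factors into the normalization of the Fourier transform used to define $\nabla$ via its symbol (the paper's transform carries $e^{-2\pi i x\cdot\xi}$, so $\widehat{\partial_j u}(\xi)=2\pi i\xi_j\hat u(\xi)$); I will write $-i\sqrt{\mu/(2d)}\,\nabla u$ as a Fourier multiplier with symbol $2\pi\sqrt{\mu/(2d)}\,\xi=\sqrt{2\pi^2\mu/d}\,\xi$, matching the limit. Then to upgrade pointwise-a.e. convergence of the multipliers to $L^2$-convergence of $\widehat{\mathcal{T}_\ve u}$, I would use the dominated convergence theorem: the integrand is $\bigl|\ve^{-1/2}h(\sqrt\ve\xi)-\sqrt{2\pi^2\mu/d}\,\xi\bigr|^2|\hat u(\xi)|^2$, and I need an $\ve$-uniform, $L^1(d\xi)$-integrable dominating function.

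\textbf{The dominating bound — the main obstacle.} Lemma \ref{lipschitz1} gives $h$ globally Lipschitz, hence $|h(\zeta)|\le L|\zeta|$ for all $\zeta$, so $|\ve^{-1/2}h(\sqrt\ve\xi)|=\ve^{-1/2}|h(\sqrt\ve\xi)|\le L|\xi|$ uniformly in $\ve$; combined with the linear growth $|\sqrt{2\pi^2\mu/d}\,\xi|$ of the limit symbol, the integrand is bounded by $C|\xi|^2|\hat u(\xi)|^2$, which lies in $L^1(d\xi)$ precisely because $u\in H^1(\RR^d)$, i.e. $\xi\hat u\in L^2$. So dominated convergence applies and $\|\mathcal{T}_\ve u+i\sqrt{\mu/(2d)}\,\nabla u\|_{L^2}\to0$. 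The only genuinely delicate point is the Lipschitz bound on $h$ at the origin, but that is exactly the content of Lemma \ref{lipschitz1} (proved via the second-order expansion \eqref{ftrans1}), so the argument is self-contained; I would just be careful to state the domination uniformly in $\ve\in(0,1]$ and to track the normalization constant so that the stated limit $-i\sqrt{\mu/(2d)}\,\nabla u$ comes out with the right numerical coefficient.
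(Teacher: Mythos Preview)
Your proposal is correct and follows essentially the same approach as the paper: work on the Fourier side, use Plancherel and the boundedness of the multiplier (via Lemma \ref{lipschitz1}) for the first part, and for the convergence combine the pointwise limit from \eqref{ftrans1} with dominated convergence using the $H^1$ bound $|\xi|^2|\hat u(\xi)|^2\in L^1$. The only cosmetic difference is that you invoke the Lipschitz bound $|h(\zeta)|\le L|\zeta|$ to get the uniform domination, whereas the paper states directly that $\sqrt{(1-\hat k(\sqrt\ve\xi))/(\ve|\xi|^2)}$ is uniformly bounded; your explicit tracking of the $2\pi$ normalization to match the constant $\sqrt{\mu/(2d)}$ is a nice clarification.
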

\begin{proof}
The first statement is due to \eqref{defteps}, Plancherel theorem and Lemma \ref{lipschitz1}. To prove the `moreover' part, it can be verified from \eqref{ftrans1} that
\begin{equation*}
  \sqrt{\frac{1-\hat{k}( \sqrt{\ve}\xi)}{ \ve |\xi|^2}  }~\text{is uniformly bounded with respect to }~\ve>0~\text{and}~\xi\in \RR^d\backslash\{0\},
\end{equation*}
and
\begin{equation*}
  \lim_{\ve\to 0}\sqrt{\frac{1-\hat{k}( \sqrt{\ve}\xi)}{ \ve |\xi|^2}  }={\pi\sqrt{\frac {2\mu} d}},\qquad\quad~\forall~\xi\in\RR^d\backslash\{0\}.
\end{equation*}
On the other hand, as $u\in H^1(\RR^d)$, we have
$$\int_{\RR^d}  (|\xi|^2+1)  |\hat{u}(\xi) |^2\ud\xi<\infty.$$
Therefore, Lebesgue's dominant convergence theorem implies
\begin{equation*}
   \lim_{\ve\to 0}\left\| \(\mathcal{T}_\ve+{i\sqrt{\frac\mu {2d}} \nabla}\)u\right\|^2_{L^2(\RR^d)}
   = \lim_{\ve\to 0}\int_{\RR^d}
   \left|  \left(\sqrt{\frac{1-\hat{k}( \sqrt{\ve}\xi)}{ \ve |\xi|^2}  }{-\pi\sqrt{\frac {2\mu} d}}\right)\xi\hat{u}(\xi) \right|^2\ud\xi =0.
\end{equation*}
\end{proof}

\section{Global wellposedness and uniform  energy estimate}\label{sec4}

 In this section, we study the global existence of solution to (\ref{main:eqn}) and establish the energy dissipation relation \eqref{eq:ME}.
As noted in Remark \ref{remark:thm1}, these issues can be discussed  under much more relaxed assumptions on the interaction potential \eqref{eq:1.06} as well as the initial data, see for instance \cite{frouvelle2012dynamics} for the spatial homogeneous case. However, for the sake of investigating the scaling limit,  we shall restrict ourselves to  the inhomogenous Maier-Saupe potential  defined by \eqref{main:potential} and integrable initial data.

From \eqref{eq:free energy-inhom-intro}, \eqref{ms-potential} and \eqref{relation:U}, we can write
\begin{align}\label{eq:1.68}
\mathcal{E}_\ve[f]
=&\int_{\RR^d\times\BS} \Big(f\log f+\frac 12 f\mathcal{U}_0[f]+\frac{\alpha\ve}2 f (m\otimes m):\mathcal{L}_\ve Q[f]-{\frac{E_0}{4\pi}}\Big)\ud m\ud x\nonumber\\
=&\int_{\RR^d} \Big(\mathcal{E}_0[f]-E_0+\frac{\alpha\ve}2 Q[f]:\mathcal{L}_\ve Q[f]\Big)\ud x\nonumber\\
=&\int_{\RR^d}\Big(\mathcal{E}_0[f]-E_0\Big) \ud x+ \frac {\alpha}{4}\int_{\RR^d\times\RR^d}\big|Q [f]( x )- Q [f]( y)\big|^2
k_\ve\( x - y\)\ud x \ud y.
\end{align}

We also recall the definition \eqref{def:f0} that  $f_{e_0}(m):=\frac 1Z e^{\eta(m\cdot e_0)^2}$  for some fixed $e_0\in\BS$. Without loss of generality we choose $e_0=(0,0,1)$.

\begin{Theorem}\label{thm:wellpose}
For any $f^{in}\in L^\infty(\RR^d;H^s(\BS))$ with $s\geq 0$ and
\begin{equation} 
    f^{in}\geq \delta>0,~\|f^{in}-f_{e_0}\|_{L^2(\RR^d\times \BS)}<\infty, ~  \int_\BS f^{in}(x,m)\ud m=1,~a.e. ~x\in\RR^d
  \end{equation}
  for some fixed constant $\delta>0$,  the Doi-Onsager equation \eqref{main:eqn} with initial condition $f|_{t=0}=f^{in}$  has a unique positive  solution $f$ satisfying,
for any $T\in (0,\infty)$,
 \begin{equation}\label{last3}
\begin{split}
&f\in L^\infty(\RR^d;C^{ \infty }((0,T)\times\BS)),~\p_tf,\,\Delta_{\BS} f\in L^\infty(\RR^d;L^2(0,T;H^{s-1}(\BS))),\\
&f\geq C(\ve,T) \delta,~\int_\BS f(m,x,t)\ud m=1~a.e. ~(x,t)\in\RR^d\times [0,T],
\end{split}
\end{equation}
for some constant $C(\ve,T)>0$.
Moreover, the following  energy dissipation law holds:
\begin{align}\label{push1}
    & \frac{\mathcal{E}_\ve[f]}\ve+\frac{1}{\ve^2}
    \int_0^t\int_{\RR^d}\int_{\BS}f|\CR\mu_\ve [f]|^2\ud m \ud x \ud \tau = \frac{\mathcal{E}_\ve[f^{in}]}\ve,~\text{for a.e.} ~ t\in(0,T),
\end{align}
if the right hand side of \eqref{push1} is finite.
\end{Theorem}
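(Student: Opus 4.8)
\textbf{Proof plan for Theorem \ref{thm:wellpose}.}
The plan is to treat \eqref{main:eqn} as a quasilinear parabolic equation on the compact manifold $\BS$ with $x$ appearing only as a parameter through the nonlocal coupling, and to run a fixed-point argument. First I would fix $\ve>0$ and work with the unknown $g=f-f_{e_0}$, so that the constraint $\int_\BS f\,\ud m=1$ becomes $\int_\BS g\,\ud m=0$ (note the flux form of the equation preserves the total mass, which is exactly \eqref{eq:1.19}, so the normalization is propagated). Rewriting \eqref{main:eqn} using \eqref{eq:UQ}, the potential $\CU_\ve[f]$ depends on $f$ only through $Q[f]*k_\ve$, which by the mollification property of $k_\ve$ is a smooth and bounded function of $x$ whenever $f\in L^\infty(\RR^d;L^1(\BS))$. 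Hence the equation reads schematically
\[
\p_t f=\tfrac1\ve\Delta_\BS f+\tfrac1\ve\CR\cdot\big(f\,\CR\CU_\ve[f]\big),
\]
with the second term Lipschitz in $f$ with constants controlled by $\|Q[f]*k_\ve\|_{W^{1,\infty}}\lesssim_\ve \|f\|_{L^\infty(\RR^d;L^1(\BS))}$. I would set up a Banach fixed point in $C([0,T_0];L^\infty(\RR^d;H^s(\BS)))$ for a short time $T_0=T_0(\ve,\delta)$: given an iterate $\tilde f$, solve the linear parabolic problem $\p_t f=\tfrac1\ve\Delta_\BS f+\tfrac1\ve\CR\cdot(f\,\CR\CU_\ve[\tilde f])$ on $\BS$ (with $x$ a parameter), using the heat semigroup $e^{t\Delta_\BS/\ve}$ and Duhamel; the gain of smoothing from $e^{t\Delta_\BS/\ve}$ handles the extra $\CR$ derivative in the nonlinear term and closes the contraction.

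Second, I would upgrade regularity and prove positivity. For fixed $x$, once $f\in L^\infty((0,T_0);H^s(\BS))$ the coefficient $\CR\CU_\ve[\tilde f]$ is smooth on $\BS$ (it is a fixed polynomial in $m$ with coefficients depending on $Q*k_\ve$), so parabolic bootstrapping on the compact manifold $\BS$ gives $f\in C^\infty((0,T_0)\times\BS)$ for a.e.\ $x$, uniformly in $x$, which is the first line of \eqref{last3}; the bound on $\p_t f$ and $\Delta_\BS f$ in $L^\infty(\RR^d;L^2(0,T;H^{s-1}(\BS)))$ comes from testing the equation against $H^{1-s}$ functions on $\BS$. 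For the lower bound $f\ge C(\ve,T)\delta$, I would apply the maximum principle to the linear parabolic equation satisfied by $f$ with the (now known, smooth, bounded) drift $\CR\CU_\ve[f]$: writing the equation as $\p_t f=\tfrac1\ve\Delta_\BS f+\tfrac1\ve\CR f\cdot b+\tfrac1\ve(\CR\cdot b) f$ with $b=\CR\CU_\ve[f]$ bounded, the minimum principle gives $f(\cdot,t)\ge \delta e^{-\|\CR\cdot b\|_\infty t/\ve}$, hence the claimed exponential lower bound; this in particular makes $\log f$ well-defined and the equation genuinely parabolic uniformly on $[0,T_0]$.

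Third comes continuation to arbitrary $T$. The short-time existence time $T_0$ depends on $\ve$ and on $\|f(\cdot,t_0)\|_{L^\infty(\RR^d;L^1(\BS))}=1$ (a conserved quantity) together with the pointwise lower bound $\delta$; since $\|f\|_{L^\infty(\RR^d;L^1(\BS))}\equiv1$ is conserved and the lower bound degrades only through the explicit factor $e^{-Ct/\ve}$, one can iterate the short-time construction to cover $[0,T]$ for any finite $T$, with the lower bound accumulating to $f\ge C(\ve,T)\delta$. Finally, for the energy law \eqref{push1}: once we know $f$ is smooth in $(t,m)$, positive, and decays to $f_{e_0}$ at spatial infinity (this decay must be checked --- it follows because $f-f_{e_0}$ solves a parabolic equation with $L^2(\RR^d\times\BS)$ data and an $x$-uniform parabolic smoothing on $\BS$, and the nonlocal term is a bounded perturbation, so the $L^2(\RR^d\times\BS)$ norm of $f-f_{e_0}$ stays finite and propagates), I would multiply \eqref{main:eqn} by $\mu_\ve[f]=\log f+\CU_\ve[f]$ and integrate over $\RR^d\times\BS$. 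Using \eqref{identity:R-1} for the integration by parts in $m$ and the symmetry of the kernel $k_\ve$ for the $\frac{\ud}{\ud t}\CU_\ve$ term (so that $\int f\,\p_t\CU_\ve[f]=\int \CU_\ve[f]\,\p_t f$), one obtains $\frac{\ud}{\ud t}\mathcal{E}_\ve[f]=-\tfrac1\ve\int f|\CR\mu_\ve[f]|^2$, which integrates in time to \eqref{push1}; the rewriting \eqref{eq:1.68} of $\mathcal{E}_\ve$ shows each term is well-defined under the hypothesis that the right-hand side is finite.

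The main obstacle I expect is not the fixed-point scheme itself but making every estimate uniform in the spatial parameter $x\in\RR^d$ while keeping the nonlocal coupling under control: the drift $\CR\CU_\ve[f]$ at a point $x$ depends on $f$ at \emph{all} points through $Q[f]*k_\ve$, so the contraction and the bootstrap must be run in the $x$-uniform norms $L^\infty(\RR^d;\cdot)$, and one must repeatedly invoke $\|Q[f]*k_\ve\|_{W^{1,\infty}(\RR^d)}\le \|\nabla k_\ve\|_{L^1}\,\|Q[f]\|_{L^\infty(\RR^d)}$ together with $|Q[f]|\le C$ pointwise (valid since $f\ge0$, $\int_\BS f=1$). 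A secondary technical point is justifying the integration by parts in the energy identity on the unbounded domain $\RR^d$, which is where the propagation of the $L^2(\RR^d\times\BS)$ closeness to $f_{e_0}$, and the decay at infinity, are needed; I would handle this by a cutoff-in-$x$ argument and let the cutoff exhaust $\RR^d$, using the finiteness of $\mathcal{E}_\ve[f^{in}]$ and of $\|f^{in}-f_{e_0}\|_{L^2}$ to control the error terms.
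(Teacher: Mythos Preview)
Your plan follows the same architecture as the paper's proof: a Banach fixed point in $L^\infty(\RR^d;H^s(\BS))$, parabolic bootstrap on $\BS$, a maximum-principle lower bound, propagation of $\|f-f_{e_0}\|_{L^2(\RR^d\times\BS)}$, and a spatial cutoff for the energy identity. The overall strategy is correct, but two steps are looser than they should be.

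First, your continuation argument is not quite right as stated. The local existence time produced by a contraction in $C([0,T_0];L^\infty(\RR^d;H^s(\BS)))$ depends on the $H^s$ norm of the data, not merely on $\|f\|_{L^1(\BS)}=1$ and the lower bound: in the Lipschitz estimate for the map $\tilde f\mapsto f$ the term $\CR\cdot\big(f_2\,\CR\CU_\ve[\tilde f_1-\tilde f_2]\big)$ carries a factor $\|f_2\|_{H^s}$. What actually makes the solution global is a separate a priori $H^s$ estimate, obtained by observing (as the paper does) that once a solution exists the equation $\ve\p_t f=\Delta_\BS f+\CR\CU_\ve[f]\cdot\CR f+f\,\Delta_\BS\CU_\ve[f]$ is \emph{linear} in $f$ with coefficients bounded solely through $|Q[f]*k_\ve|\le 2/3$; a standard energy estimate then gives an $H^s$ bound growing at most exponentially in $t$, and the local argument can be iterated.

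Second, the cutoff step for \eqref{push1} is more delicate than ``multiply by $\mu_\ve[f]$ and integrate'', because the interaction term is nonlocal in $x$ and none of $f$, $\log f$, $\CU_\ve[f]$ is integrable on $\RR^d$. The paper's device is to split $\mathcal{E}_\ve[f]$ via \eqref{eq:1.68} and rewrite the nonlocal contribution through $\tilde Q:=Q[f]-Q[f_{e_0}]\in L^2(\RR^d)$ (using $\mathcal{L}_\ve Q[f_{e_0}]=0$). After localizing by $\phi_R$, the cross term becomes $\int \p_t\tilde Q:[\mathcal{T}_\ve,\phi_R]\cdot\mathcal{T}_\ve\tilde Q$ with $\mathcal{L}_\ve=\mathcal{T}_\ve\cdot\mathcal{T}_\ve$; this vanishes as $R\to\infty$ because $\mathcal{T}_\ve$ is $L^2$-bounded (Lemma~\ref{converge-T}) and one has the separate bounds $\p_t Q[f]\in L^\infty(0,T;L^2(\RR^d))$ and $\mathcal{T}_\ve\tilde Q\in L^\infty(0,T;L^2(\RR^d))$. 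Your outline anticipates a cutoff argument but not this mechanism; a naive integration by parts in $x$ does not close.
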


\begin{Remark}\label{remark1}
 This theorem leads to  Part $(i)$ of Theorem \ref{thm:converge}. Here, we also remark that
 the admissible set of initial data satisfying the uniform bound in (\ref{assump:uniform})  includes at least a family of local equilibrium distributions.
More precisely, for  $f^{in}=h_{n^{in}(x)}(m)$ (consequently  $\mathcal{E}_0[f^{in}]=E_0$) with $n^{in}(x)-e_0 \in H^1(\RR^d),$ one can verify that
\begin{equation}\label{eq:1.32}
   \frac \alpha{4\ve}\int_{\RR^d\times\RR^d}\left| Q [f^{in}]( x )- Q [f^{in}]( y)\right|^2
k_\ve\( x - y\)\ud x \ud y\le C,
\end{equation}
with $C$ independent of $\ve$, which combined  with  \eqref{eq:1.68}  implies that
\[0\leq \mathcal{E}_\ve[f^{in}]\leq C\ve.\]
Note that \eqref{eq:1.32}  is due to the following fact: for any $v\in L^2_{loc}(\RR^d)$ with $$\nabla v\in L^2(\RR^d)~\text{and}~ \|v-v_0\|_{L^2(\RR^d)}<\infty$$ for some constant vector $v_0$, it holds that
  \begin{align*}
     &\int_{\RR^d\times\RR^d}\left| v( x )- v( y)\right|^2 k_\ve\( x - y\)\ud x \ud y\\
=&\int_{\RR^d\times\RR^d}\left|v(y+z)-v(y)\right|^2k_\ve\(z\)\ud z\ud y\\
\leq &\int_{\RR^d\times\RR^d}|z|^2\int_0^1\left|\nabla v(yt+(1-t)(y+z))\right|^2\ud t k_\ve\(z\)\ud z\ud y\\
=&\ve\int_0^1\int_{\RR^d}|\frac z{\sqrt{\ve}}|^2k_\ve(z)\(\int_{\RR^d}\left|\nabla v(y+(1-t)z)\right|^2 \ud y\)\ud z\ud t\\
=&\ve\int_0^1\int_{\RR^d}|\frac z{\sqrt{\ve}}|^2 k_\ve(z)\|\nabla  v \|^2_{L^2(\RR^d)}\ud z\ud t\\
=& \ve \|\nabla v \|^2_{L^2(\RR^d)}\int_{\RR^d}|x|^2 k(x)\ud x.
   \end{align*}

\end{Remark}

\begin{proof}[Proof of Theorem \ref{thm:wellpose}]
During the proof,  $C_\ve$ will denote a generic constant, which
 might depend on $\ve$ and might change from line to line. In addition, we  write $f$ instead of $f_\ve$ for brevity.

{\bf Part 1: Existence, uniqueness and regularity.} In this part we shall focus on the wellposedness of \eqref{main:eqn}.
The proof will be divided into several steps, and in Step 2 and Step 3 we follow the method developed  in \cite{frouvelle2012dynamics}.

{\it Step 1: Existence and uniqueness of  solution with $f^{in}\in L^\infty(\RR^d;H^s)$ for any $s\geq 0$.}

The main purpose of this step is to construct a strictly positive solution to \eqref{main:eqn}. To this end,  we first define a nonlinear operator
\[\mathscr{F}g=  \CR\cdot(f\CR\CU_\ve[f])  \]
where $f$ and $g$ are related by
\begin{equation*}
\ve\p_t f=\Delta_\BS f+g,~f\mid_{t=0}=f^{in},
\end{equation*}
as well as the following  function spaces
$${\mathscr Y}_s:=L^\infty(\RR^d;L^2(0,T;H^{s-1}(\BS))),$$
and
\begin{equation}\label{last1}
{\mathscr X}_s:=\{f(m,x,t)\mid  (f_t,\Delta_\BS f)\in L^\infty(\RR^d;L^2(0,T;H^{s-1}(\BS)))\}.
\end{equation}
We  equip $\mathscr X_s$ with  norm $$\|f\|_{\mathscr X_s}=\|(\p_tf ,\Delta_\BS f) \|_{L^\infty(\RR^d;L^2(0,T;H^{s-1}(\BS)))}+\|f\|_{L^\infty(\RR^d;C([0,T];H^s(\BS)))}.$$
We shall also assume in this step that $T<1$.
Then a standard estimate for the heat equation gives
\begin{equation}\label{eq:1.18}
\|f\|_{\mathscr X_s}\leq C_\ve\(\|f^{in}\|_{L^\infty(\RR^d;H^s(\BS))}+\|g\|_{\mathscr Y_s}\).
\end{equation}
It follows from \eqref{eq:1.12}  that, every $f\in \mathscr{X}_s$ fulfills, for every $k\in\mathbb{N}$,
\begin{equation}
   \|(\CU_\ve[f], \CR\CU_\ve [f ], \Delta_\BS\CU_\ve [f ])\|_{L^\infty( (0,T)\times \RR^d; C^k( \BS))}\leq C_\ve\|f\|_{L^\infty( \RR^d\times [0,T];L^1( \BS))}\leq C_\ve\|f\|_{\mathscr X_s},\label{eq:1.61}
\end{equation}
where $C_\ve$ is independent of $f$ and $T>0$.   It follows from \eqref{eq:1.61} that, for almost every $t\in [0,T]$,
 \begin{equation*}
 \begin{split}
    &\| \mathscr{F} g(\cdot,t)\|_{L^\infty(\RR^d;H^{s-1}(\BS))}\leq C_\ve\|f\|_{\mathscr X_s}\|f(\cdot,t)\|_{L^\infty(\RR^d;H^{s}(\BS))}\\
    &\leq C_\ve\|f\|_{\mathscr X_s}\|f(\cdot,t)\|^{1/2}_{L^\infty(\RR^d;H^{s-1}(\BS))}\|f(\cdot,t)\|^{1/2}_{L^\infty(\RR^d;H^{s+1}(\BS))}\\
    &\leq C_\ve \|f \|_{\mathscr X_s}^{3/2} \|f(\cdot,t)\|^{1/2}_{L^\infty(\RR^d;H^{s+1}(\BS))}.
 \end{split}
 \end{equation*}
 This together with \eqref{eq:1.18} and the Cauchy-Schwarz inequality implies
  \begin{equation}
 \begin{split}
    \| \mathscr{F} g\|^2_{\mathscr Y_s}\leq C_\ve \sqrt{T} \|f\|_{\mathscr X_s}^{4}\leq C_\ve \sqrt{T} \(\|f^{in}\|^4_{L^\infty(\RR^d;H^s(\BS))}+\|g\|^4_{\mathscr Y_s}\).
 \end{split}
 \end{equation}
 If we denote $B_R$ to be the ball of radius $R$ in space $ \mathscr{Y}_s$, then by choosing $R \geq  \|f^{in}\|_{L^\infty(\RR^d;H^s(\BS))} $ and afterwards choosing $T\leq \frac1{4C_\ve^2 R^4}$, we obtain that $\mathscr{F}(B_R)\subset B_R$. A similar estimate on the difference $\mathscr{F}g_1-\mathscr{F}g_2$ implies that $\mathscr{F}$ is a contraction on $\mathscr{Y}_s$ provided that $T\ll \|f^{in}\|^{-4}_{L^\infty(\RR^d;H^s(\BS))}$. So $\mathscr{F}$ must have a unique  fixed point  and this leads to the local in time solution of \eqref{main:eqn}.

To extend the solution to be a unique global in time one, it follows from   \eqref{eq:1.61} that, the equation \eqref{main:eqn}  can be considered as a heat equation over $\BS$ with uniformly bounded coefficient
 \begin{equation}\label{eq:1.45}
   \ve\p_t f=\Delta_\BS f+\CR\CU_\ve[f]\cdot\CR f+ f \Delta_\BS \CU_\ve [f].
 \end{equation}
So the standard energy estimate implies the existence and uniqueness of solution on $[0,\infty)$.

\textit{Step 2: Regularity of the solution.}
In the previous step, we show  $f\in \mathscr{X}_s$, defined by \eqref{last1}.  So for every $T>0$,  there exists at least one $\tau\in [0,T)$ such that $f\mid_{t=\tau}\in L^\infty(\RR^d;H^{s+1}(\BS))$. Using this as initial data and solve \eqref{main:eqn} on $[\tau,T)$, the previous step, especially the uniqueness, implies
$$(\p_t f,\Delta_\BS f)\in L^\infty(\RR^d;C([\tau,\infty);H^{s+1}(\BS))).$$
Since this argument applies to every $T>0$, we conclude that
$$(\p_t f,\Delta_\BS f)\in L^\infty(\RR^d;C((0,\infty);H^{s+1}(\BS)))$$
 and thus more spatial regularity in $m\in\BS$ can be deduced  if we repeat this argument. Finally we obtain the instantaneous regularity
\begin{equation}\label{last2}
f\in L^\infty(\RR^d;C^\infty((0,\infty)\times \BS ))\cap \mathscr{X}_s.
\end{equation}

\textit{Step 3: Positivity of  the solution.}
We first prove the positivity of solution by assuming that $f^{in}\in L^\infty(\RR^d;H^s(\BS)\cap C(\BS))$. With the additional assumption on the continuity of $f^{in}$ in $\BS$,   it follows from \eqref{last2} that, for sufficiently small time $0<\tau\ll 1$, we have $f>\delta/2$ on $[0,\tau)$ and then $f$ becomes smooth in $[\tau,\infty)\times\BS$. So we can write (\ref{eq:1.45}) as
\begin{equation*}
  \ve\p_t f=\Delta_\BS f+\CR\CU_\ve[f]\cdot\CR f+ f G,
\end{equation*}
where
$$G(t, m, x)=\Delta_{\BS}\CU_\ve[f]={6\alpha(m\otimes m-\tfrac13\mathbb{ I}_3):  Q[f]* k_\ve.}$$
For almost every $x\in\mathbb{R}^d$, we denote by $T_x>0$   the first time such that
\begin{equation}\label{eq:1.65}
\inf_{m\in\BS}f(T_x, x, m)=0.
\end{equation}
 Then for every $t\in[0, T_x)$, it holds $f>0$ and we consider $\tilde f(t, x, m)=f e^{\frac 6\ve\int_0^t|Q[f]|}$. It can be readily verified that
\begin{equation*}
\begin{split}
  &\ve\p_t \tilde{f}=e^{\frac 6\ve\int_0^t|Q[f]|}(\ve\partial_tf+6|Q[f]| f)\\
  \ge & e^{\frac 6\ve\int_0^t|Q[f_\ve]|}( \Delta_\BS f+\CR\CU_\ve[f]\cdot\CR f)= \Delta_\BS\tilde f+\CR\CU_\ve[f]\cdot\CR \tilde f.
\end{split}
\end{equation*}
So the weak maximum principle implies that $\tilde f(m,x,t)$ attains its minimum on $\{0\}\times\BS$ for fixed $x$, that is
\begin{equation}\label{last5}
f(t, x, m)\ge \inf_{m\in\BS}f^{in}(x, m)e^{-\frac 6\ve\int_0^t|Q[f]|} >0,\qquad \text{for } t\le T_x, m\in\BS,
\end{equation}
which contradicts \eqref{eq:1.65}. Thus $f$ stays positive and the above estimate is valid for every $t\geq 0$. Moreover, \eqref{last5} gives  the lower bound for the decay in \eqref{last3} and it is easy to obtain that $\int_\BS f(m,x,t)\ud m=1$ according to \eqref{eq:1.19}.

If we abandon the assumption on the continuity of $f^{in}$ in $m\in\BS$, that is assume we have $f^{in}\in L^\infty(\RR^d;H^s(\BS))$, then we can find a family of approximation $f^{in}_{(n)}$, indexed by $n\in\mathbb{N}^*$, such that $f^{in}_{(n)}\geq \delta/2~a.e.~\text{in}~\RR^d\times\BS,~f^{in}_{(n)}\in L^\infty(\RR^d;H^s(\BS)\cap C(\BS)) $ such that
$$  f^{in}_{(n)}\xrightarrow{n\to \infty} f^{in}~\text{strongly in}~L^\infty(\RR^d;H^s(\BS)).$$
In view of \eqref{eq:1.61}, we can perform standard energy estimate, to show that the solution of \eqref{main:eqn} $f_{(n)}$ with initial data  $f^{in}_{(n)}$  is a Cauchy sequence in $\mathscr{X}_s$:
$$\|f_{(n)}-f_{(m)}\|_{\mathscr{X}_s}\leq C_\ve(T,f^{in}) \|f^{in}_{(n)}-f^{in}_{(m)}\|_{L^\infty(\RR^d;H^s(\BS))}.$$
 So $f_{(n)}\xrightarrow{n\to\infty} f\in\mathscr{X}_s$ and one can verify that $f$ solves \eqref{main:eqn} with initial data $f^{in}$ and is positive for almost every $x\in\RR^d$.
Therefore, we complete the proof of existence, uniqueness and instantaneous regularity   of positive solution $f$ with \eqref{last3}.

{\bf Part 2: Energy dissipation law.}
This part is devoted to the proof of \eqref{push1}. The main difficulty is brought by the lack of  integrability of $f$ and $Q[f]$ at $x=\infty$.

\textit{Step 1: Decay to constant distribution at $x=\infty$.}
The goal of this step is to prove the following estimate
\begin{equation}\label{eq:1.66}
  \|f(\cdot,t)-f_{e_0}\|_{L^2( \RR^d\times\BS)}\leq  e^{Ct}\|f^{in}- f_{e_0} \|_{L^2( \RR^d \times \BS )}.
\end{equation}
First of all, we make the assertion that  $f_{e_0}=\frac 1Z e^{\eta(m\cdot e_0)^2}$ is a solution to \eqref{eq:1.45} for fixed $e_0\in\BS$. Actually, since $f_{e_0}$ is $x$-independent, we have
\begin{equation*}
 \CU_\ve[f_{e_0}]=\CU_0[f_{e_0}],\quad   \mathcal{L}_\ve Q[f_{e_0}]=0
\end{equation*}
according to Lemma \ref{uniaxial1} and  formula \eqref{def:L}. Moreover,
\begin{equation}\label{eq:1.33}
  \ve\p_t f_{e_0}-\CR(f_{e_0}\CR (\log f_{e_0}+\CU_\ve[f_{e_0}]))= -\CR(f_{e_0}\CR (\log f_{e_0}+\CU_0[f_{e_0}])).
\end{equation}
On the other hand, since $f_{e_0}$ is the global minimizer of the homogeneous Maier-Saupe energy, according to Proposition \ref{prop:critical}, we have
\begin{equation*}
  \log f_{e_0}+\CU_0[f_{e_0}]\equiv const,
\end{equation*}
and together with \eqref{eq:1.33}
\begin{equation}\label{eq:1.04}
  \ve\p_t f_{e_0}=\Delta_\BS f_{e_0}+\CR f_{e_0}\cdot \CR \CU_\ve[f_{e_0}]+f_{e_0}\Delta_\BS \CU_\ve[f_{e_0}].
\end{equation}

 Now we rewrite \eqref{eq:1.45} in the similar form of \eqref{eq:1.04}:
\[\ve\p_t f=\Delta_\BS f+\CR f\cdot \CR \CU_\ve[f]+f\Delta_\BS \CU_\ve[f].\]
Subtracting \eqref{eq:1.04} by \eqref{eq:1.33} leads to the equation for $g:=f-f_{e_0}$,
\[\ve\p_t g-\Delta_\BS g= \CR g\cdot\CR\CU_\ve[f]+\CR f_{e_0}\cdot \CR \CU_\ve[g]+g\Delta_\BS \CU_\ve[f]+f_{e_0}\Delta_\BS \CU_\ve [g].\]
In view of  \eqref{eq:RU}, for almost every $x\in\RR^d$, the above equation is  a homogenous linear parabolic equation on $\BS$ with  uniformly bounded coefficient (depending on $\ve$). So   it follows from standard energy method that
\[\ve\frac d{dt}\int_\BS g(m,\cdot)^2\ud m+\int_\BS |\CR g(m,\cdot)|^2\ud m\leq C_\ve\int_\BS g^2(m,\cdot)\ud m,~a.e.~(x,t)\in\RR^d\times\RR_+\]
 and thus
\[\|g(\cdot,t)\|^2_{L^2(\RR^d\times \BS)}\leq  e^{Ct}\|f^{in}(\cdot)-f_{e_0}(\cdot)\|^2_{L^2(\RR^d\times\BS)},\]
which yields \eqref{eq:1.66}.

\textit{Step 2: Energy dissipation law.} Define
\begin{align*}
  \tilde{Q}[f]:=Q[f]-Q[f_{e_0}],
\end{align*}
which belongs to $L^\infty(0,T;L^2(\RR^d))$ owning to \eqref{eq:1.66}. Thus, we have from Lemma \ref{converge-T} that
\begin{equation}\label{eq:1.35}
\mathcal{L}_\ve \tilde Q[f],~\mathcal{T}_\ve \tilde Q[f]\in {L^\infty(0,T;L^2(\RR^d))}.
\end{equation}
Now we show that
\begin{equation}\label{eq:1.67}
\p_t Q[f] \in {L^\infty(0,T;L^2(\RR^d))}.
\end{equation}
 To this end, we multiply \eqref{eq:1.45} by $m\otimes m -\tfrac 13 I_3$ and integrate over $\BS$. This gives
 \begin{equation}\label{eq:1.05}
   \p_t Q[f]=-6Q[f ]+2\alpha \CM_{f }(Q[f ]*k_\ve)=-6Q[f ]+2\alpha \CM_{{f }}(Q[f ])-2\ve \alpha \CM_{f }(\mathcal{L}_\ve \tilde Q[f ] ),
 \end{equation}
 where $\mathcal{M}_{f}$ is a linear operator  defined, for any $3\times 3$ matrix $A$, by
\begin{equation*}
  \mathcal{M}_{f}(A)=\frac23A+Q[f]\cdot A+A\cdot Q[f]-2A:\int_{\BS}m^{\otimes 4} f(\cdot, m) \ud m.
\end{equation*}
The first equality in \eqref{eq:1.05}  will be derived in Remark \ref{last4} below and the second one is a consequence of \eqref{def:L} and the linearity of $\mathcal{M}_f$.
 As $f=f_{e_0}$ is an equilibrium solution of  \eqref{eq:1.04},  $Q[f_{e_0}]$ is an
equilibrium solution of \eqref{eq:1.05}. This together with   $\mathcal{L}_\ve Q[f_{e_0}]=0$ leads to
 $$-6Q[f_{e_0}]+2\alpha \CM_{f_{e_0}}(Q[f_{e_0}])=0.$$
In view of  \eqref{eq:1.66}, we arrive at
$$-6Q[f ]+2\alpha \CM_{f }(Q[f ])\in {L^\infty(0,T; L^2(\RR^d))}$$
and the proof  of \eqref{eq:1.67} is achieved.

To establish \eqref{push1},
we choose  a cut-off function $\phi\in C_c^1(\RR^d)$ such  that  $\phi(x)=1$ for $|x|\leq 1$ and define $\phi_R(x)=\phi(x/R)$. Then, it follows from \eqref{relation:U} and  \eqref{last3} that
\begin{equation*}
\begin{split}
&-\frac 1\ve\int_{\RR^d\times\BS} f\big|\CR\mu_\ve[f]\big|^2 \phi_R\\
=&\int_{\RR^d\times\BS} f_t(\log f+\CU_\ve[f]) \phi_R\\
=&\int_{\RR^d\times\BS}\(\frac d{dt} f\log f \phi_R+f_t \CU_{0}[f]\phi_R+f_t\ve\alpha(m\otimes m):\mathcal{L}_\ve Q[f]\phi_R\)\ud m\ud x\\
=&\frac d{dt} \int_{\RR^d} (\mathcal{E}_0[f]-E_0) \phi_R +\ve\alpha\int_{\RR^d}\p_t Q[f]:\mathcal{L}_\ve Q[f]\phi_R\\
=&\frac d{dt} \int_{\RR^d} (\mathcal{E}_0[f]-E_0) \phi_R +\ve\alpha\int_{\RR^d}\p_t \tilde Q[f]:\mathcal{L}_\ve \tilde Q[f]\phi_R\\
=&\frac d{dt} \int_{\RR^d} (\mathcal{E}_0[f]-E_0)\phi_R +\frac{\ve\alpha}2\frac d {dt}\int_{\RR^d}\phi_R|\mathcal{T}_\ve  \tilde Q[f]|^2-\alpha\ve  \int_{\RR^d} \p_t  \tilde Q[f]:   [\mathcal{T}_\ve,\phi_R] \cdot\mathcal{T}_\ve  \tilde Q[f].
\end{split}
\end{equation*}

Integrating the above identity in $t$ leads to the localized energy dissipation law:
\begin{equation}\label{last6}
\begin{split}
 &  \int_{\RR^d} \(\mathcal{E}_0[f(\cdot,t)]-{E}_0+\frac{\ve\alpha}2|\mathcal{T}_\ve  \tilde Q[f(\cdot,t)]|^2\) \phi_R +\frac 1\ve\int_0^t\int_{\RR^d\times\BS} f\big|\CR\mu_\ve[f]\big|^2 \phi_R\\
 &{-\alpha\ve  \int_{\RR^d} \p_t  \tilde Q[f]:   [\mathcal{T}_\ve,\phi_R] \cdot\mathcal{T}_\ve  \tilde Q[f]}= \int_{\RR^d} \(\mathcal{E}_0[f^{in}]-{E}_0+\frac{\ve\alpha}2|\mathcal{T}_\ve  \tilde Q[f^{in}]|^2\) \phi_R\\
\end{split}
\end{equation}
Now we claim that
\[{ \int_{\RR^d} \p_t  \tilde Q[f]:   [\mathcal{T}_\ve,\phi_R] \cdot\mathcal{T}_\ve  \tilde Q[f]\ud x \xrightarrow{R\to\infty}0},~a.e.~\text{on}~(0,t).\]
Actually, owning to \eqref{eq:1.35} and \eqref{eq:1.67}, we only need to show that
\begin{equation}\label{eq:1.36}
  [\mathcal{T}_\ve,\phi_R] g\xrightarrow{R\to\infty}0,~\forall g\in L^2(\RR^d).
\end{equation}
To show this, noticing that the $\mathcal{T}_\ve$ is bounded in $L^2(\RR^d)$ (see Lemma \ref{converge-T})
\begin{align*}
    \|[\mathcal{T}_\ve,\phi_R] g\|_{L^2(\RR^d)}&\leq\|\mathcal{T}_\ve((\phi_R-1)g)\|_{L^2(\RR^d)}+\|(1-\phi_R)\mathcal{T}_\ve g\|_{L^2(\RR^d)}\\
    &\leq C_\ve\(\| (\phi_R-1)g\|_{L^2(\RR^d)}+\|(1-\phi_R)\mathcal{T}_\ve g\|_{L^2(\RR^d)}\).
\end{align*}
Then applying dominated convergence theorem to the last two components leads to \eqref{eq:1.36} and thus the claim has been justified.
Notice also that all the rest terms in \eqref{last6} are non-negative and non-decreasing in $R$.
So sending $R\to \infty$ in \eqref{last6} leads to
\begin{equation*}
\begin{split}
 &  \int_{\RR^d} \(\mathcal{E}_0[f(\cdot,t)]-E_0+\frac{\ve\alpha}2|\mathcal{T}_\ve  \tilde Q[f(\cdot,t)]|^2\)  +\frac 1\ve\int_0^t\int_{\RR^d\times\BS} f\big|\CR\mu_\ve[f]\big|^2\\
 =& \int_{\RR^d} \(\mathcal{E}_0[f^{in}]-E_0+\frac{\ve\alpha}2|\mathcal{T}_\ve  \tilde Q[f^{in}]|^2\).
\end{split}
\end{equation*}
Then using \eqref{eq:1.68} and the fact that $$\int_{\RR^d}|\mathcal{T}_\ve  \tilde Q[f(\cdot,t)]|^2= \int_{\RR^d} \tilde Q[f]:\mathcal{L}_\ve \tilde Q[f]= \int_{\RR^d}  Q[f]:\mathcal{L}_\ve  Q[f],$$
we obtain \eqref{push1} as well as \eqref{eq:ME}.
\end{proof}

\begin{Remark}\label{last4}
For completeness, we give  the derivation of \eqref{eq:1.05} by calculating the second moment of the right hand side of \eqref{eq:1.45}. For every constant symmetric matrix $D=\{D_{ij}\}_{1\leq i,j\leq 3}$:
\begin{align*}
   & \int_\BS \Big( \Delta_\BS f+\CR\cdot(f\CR\CU_\ve[f])\Big)(m_im_j-\tfrac13\delta_{ij}) D_{ij} dm\\
=&\int_\BS \Big(  f\Delta_{\BS}(m_im_j-\tfrac13\delta_{ij}) D_{ij} -f\CR\CU_\ve[f]\cdot\CR\big(m_im_j D_{ij}\big)\Big) dm\\
=&  \int_\BS \Big( -6 f (m_im_j-\tfrac13\delta_{ij}) D_{ij} +4\alpha f  m\wedge\big( (Q[f]*k_\ve)\cdot m\big)\cdot\big(m\wedge(D\cdot m)\big)\Big) \ud m \\
=&-6 Q_{ij}[f]:D_{ij}+4\alpha Q_{ij}[f]* k_\ve Q_{j\ell}[f] D_{i\ell}+\tfrac{4\alpha}3 Q_{i\ell}[f]* k_\ve D_{i\ell}-4\alpha D:\int_\BS m^{\otimes 4} f\ud m:(Q[f]*k_\ve),
\end{align*}
where   we employed \eqref{identity:R-1}, \eqref{Appd:eq:R3},  \eqref{eq:RU}, \eqref{Appd:eq:R2}
and the following Cauchy-Binet identity successively
\[(m\wedge u)\cdot(m\wedge v)=u\cdot v-(m\cdot u)(m\cdot v),~\quad\text{for } |m|=1.\]
The above formula together with  $\mathcal{L}_\ve Q[f_{e_0}]=0$ implies the first equality in \eqref{eq:1.05} since $D_{ij}$ is any symmetric matrix.
We note that, by closing the fourth-order moment utilizing the Bingham closure, \eqref{eq:1.05} can be used to derive a closed $Q$-tensor system, see \cite{HLWZZ} for details.
\end{Remark}

In the sequel, to figure out the dependence on $\ve$, we use $f_\ve$ to denote the solutions to \eqref{main:eqn} constructed  in Theorem \ref{thm:wellpose}. Since $f_\ve(m,\cdot)$ is a family of probability density,
\begin{align}\label{bound-Q}
\|Q[f_\ve]\|_{L^\infty(\mathbb{R}^d)}\le \frac23,\quad \|Q[f_\ve]* k_\ve\|_{L^\infty(\mathbb{R}^d)}\le \frac23.
\end{align}
Therefore, we infer  from (\ref{eq:UQ}), (\ref{eq:RU}) and (\ref{eq:R2U}) that
\begin{align}\label{bound-U}
\big\|(\CU_\ve[f_\ve], \CR\CU_\ve[f_\ve],\Delta_\BS \CU_\ve[f_\ve])\big\|_{L^\infty(\mathbb{R}^d)}\le C.
\end{align}
Note that here and in the sequel, $C$ will be a generic positive constant which might change from line to line and will be  independent of $\ve$.
\begin{Proposition}
Under the assumptions of Theorem \ref{thm:converge}.
Let $f_\ve$ be solutions to Doi-Onsager \eqref{main:eqn}. Then, for every  $T>0$ and {every $\delta\in (0,T)$},
\begin{align}\label{eq:1.71}
&{\|\CR f_\ve\|_{L^\infty(\RR^d;L^2(\BS\times (0,T)))}\leq C},\\
\label{eq:1.29}
&{\|\Delta_\BS f_\ve\|_{L^\infty(\RR^d ;L^2(\BS\times (\delta,T)))}\leq C\delta^{-1}},\\
\label{eq:1.17}
&\|\p_t (Q[f_\ve]*k_\ve)\|_{L^2(\RR^d\times (0,T))} \leq C,\\
&\| Q[f_\ve]-Q[f_{e_0}]\|_{L^\infty(0,T;L^2(\RR^d))} \leq C,\label{eq:1.21}
\end{align}
where $C$ is a constant independent of $\ve$.
\end{Proposition}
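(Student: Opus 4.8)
The plan is to extract all four bounds from two sources, both already at hand. The first is the evolution equation written in the form \eqref{eq:1.45}, which for a.e.\ fixed $x$ is a linear parabolic equation on $\BS$ whose coefficients are bounded \emph{uniformly in $\ve$} by \eqref{bound-U}, together with the data bound $\|f^{in}_\ve\|^2_{L^\infty(\RR^d;L^2(\BS))}\le C\ve^{-1}$ from \eqref{assump:uniform}. The second is the dissipation identity \eqref{push1}--\eqref{eq:ME}: since $\mathcal{E}_\ve\ge 0$ by \eqref{eq:1.68} and $\mathcal{E}_\ve[f^{in}_\ve]\le C\ve$ by \eqref{assump:uniform}, each of the three nonnegative terms on the left of \eqref{push1} is $\le C$; in particular $\int_0^T\!\!\int_{\RR^d}\!\int_{\BS}f_\ve|\CR\mu_\ve[f_\ve]|^2\le C\ve^2$ and $\sup_t\|\mathcal{T}_\ve\tilde Q[f_\ve](\cdot,t)\|_{L^2(\RR^d)}\le C$.

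For \eqref{eq:1.71} I would test \eqref{eq:1.45} against $f_\ve$ over $\BS$ for fixed $x$; integration by parts gives $\tfrac\ve2\tfrac{d}{dt}\|f_\ve(\cdot,x,t)\|_{L^2(\BS)}^2+\|\CR f_\ve\|_{L^2(\BS)}^2=\tfrac12\int_{\BS}f_\ve^2\,\Delta_{\BS}\CU_\ve[f_\ve]\,\ud m$, and by \eqref{bound-U} the right side is $\le C\|f_\ve(\cdot,x,t)\|_{L^2(\BS)}^2$. The crucial point is that, since $\dim\BS=2$ and $\int_\BS f_\ve(\cdot,x,t)\,\ud m=1$, the two-dimensional Gagliardo--Nirenberg interpolation on $\BS$ allows this term to be \emph{absorbed} into $\tfrac12\|\CR f_\ve\|^2_{L^2(\BS)}$ up to an $\ve$-independent constant, leaving $\tfrac\ve2\tfrac{d}{dt}\|f_\ve\|_{L^2(\BS)}^2+\tfrac12\|\CR f_\ve\|_{L^2(\BS)}^2\le C$; integrating in $t$ and using $\tfrac\ve2\|f^{in}_\ve(\cdot,x)\|^2_{L^2(\BS)}\le C$ gives \eqref{eq:1.71} (and, as a byproduct, $\sup_t\|f_\ve(\cdot,x,t)\|^2_{L^2(\BS)}\le C\ve^{-1}$ as well as the interpolation bound $\|f_\ve\|^2_{L^2(\BS)}\le C(\|\CR f_\ve\|^2_{L^2(\BS)}+1)$). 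For \eqref{eq:1.29} I would test \eqref{eq:1.45} against $-\Delta_{\BS}f_\ve$, use \eqref{bound-U} and this interpolation bound to reach $\tfrac\ve2\tfrac{d}{dt}\|\CR f_\ve\|^2_{L^2(\BS)}+\tfrac12\|\Delta_{\BS}f_\ve\|^2_{L^2(\BS)}\le C(\|\CR f_\ve\|^2_{L^2(\BS)}+1)$, then multiply by the weight $t$ to kill the singularity at $t=0$ and integrate; the right side is controlled by $\int_0^T\|\CR f_\ve(\cdot,x,t)\|^2_{L^2(\BS)}\,\ud t\le C$ from \eqref{eq:1.71}, which yields $\int_0^T t\,\|\Delta_{\BS}f_\ve(\cdot,x,t)\|^2_{L^2(\BS)}\,\ud t\le C$ and hence \eqref{eq:1.29}.

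The remaining two bounds use the moments and the energy. For \eqref{eq:1.17}, take the traceless second moment of \eqref{main:eqn}: integrating by parts in $m$ gives $\ve\,\p_t Q[f_\ve](x,t)=-\int_{\BS}f_\ve\,\CR\mu_\ve[f_\ve]\cdot\CR(m\otimes m)\,\ud m$, so Cauchy--Schwarz on $\BS$ together with $\int_{\BS}f_\ve=1$ yields $\ve^2|\p_t Q[f_\ve](x,t)|^2\le C\int_{\BS}f_\ve|\CR\mu_\ve[f_\ve]|^2\,\ud m$; integrating over $\RR^d\times(0,T)$ and invoking the dissipation bound gives $\|\p_t Q[f_\ve]\|_{L^2(\RR^d\times(0,T))}\le C$, and convolving in $x$ with $k_\ve$ (a contraction on $L^2$) gives \eqref{eq:1.17}. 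For \eqref{eq:1.21}, note first that \eqref{eq:1.66} makes $\tilde Q[f_\ve](\cdot,t)\in L^2(\RR^d)$ for each fixed $\ve$, so the following is meaningful. From $\|\mathcal{T}_\ve\tilde Q[f_\ve](\cdot,t)\|_{L^2}\le C$, together with $\mathcal{L}_\ve=\mathcal{T}_\ve\cdot\mathcal{T}_\ve$, $\|\mathcal{T}_\ve\|_{L^2\to L^2}\le C\ve^{-1/2}$ (Lemma \ref{converge-T}) and $\mathcal{L}_\ve u=\tfrac1\ve(u-u*k_\ve)$ from \eqref{def:L}, one obtains $\|\tilde Q[f_\ve](\cdot,t)-\tilde Q[f_\ve](\cdot,t)*k_\ve\|_{L^2}\le C\sqrt\ve$. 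Then, writing $\tilde Q(\cdot,t)=\big(\tilde Q-\tilde Q*k_\ve\big)(\cdot,t)+\big(Q[f_\ve]*k_\ve-Q[f_{e_0}]\big)(\cdot,t)$ (using that $Q[f_{e_0}]$ is spatially constant) and estimating the second term by $\|(Q[f^{in}_\ve]-Q[f_{e_0}])*k_\ve\|_{L^2}+\int_0^T\|\p_t(Q[f_\ve]*k_\ve)\|_{L^2}\,\ud t$, the first summand is $\le C$ because $\|f^{in}_\ve-f_{e_0}\|_{L^2(\RR^d\times\BS)}\le C$ and the second is $\le\sqrt T\,C$ by \eqref{eq:1.17}; this proves \eqref{eq:1.21}.

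I expect the genuine difficulty to be keeping every constant independent of $\ve$. The factor $\ve$ in front of $\p_t$ in \eqref{eq:1.45} turns any naive Gr\"onwall argument into a useless factor $e^{Ct/\ve}$, so in \eqref{eq:1.71}--\eqref{eq:1.29} one must really \emph{absorb} the lower-order contributions into the dissipation — which is exactly why the two-dimensional Gagliardo--Nirenberg inequality on $\BS$, the conservation $\int_{\BS}f_\ve=1$, and the precise size $\|f^{in}_\ve\|^2_{L^\infty(\RR^d;L^2(\BS))}\le C\ve^{-1}$ all enter. The difficulty in \eqref{eq:1.21} is complementary: $\mathcal{E}_\ve$ only controls $\tilde Q$ modulo the (large) kernel of $\mathcal{L}_\ve$, so its $L^\infty_tL^2_x$ bound cannot be read off from the energy alone and has to be closed through \eqref{eq:1.17} and the initial data.
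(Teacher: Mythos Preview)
Your proof is correct and for \eqref{eq:1.71}, \eqref{eq:1.29}, \eqref{eq:1.17} it is essentially the paper's argument: test by $f_\ve$ and by $-t\Delta_{\BS}f_\ve$, use \eqref{bound-U} and absorb $\|f_\ve\|_{L^2(\BS)}^2$ via the two-dimensional interpolation (the paper quotes this as the Nash inequality $\|f_\ve\|_{L^2(\BS)}^2\le C\|\CR f_\ve\|_{L^2(\BS)}+C$ using $\|f_\ve\|_{L^1(\BS)}=1$), then for \eqref{eq:1.17} take second moments and use Cauchy--Schwarz plus the dissipation bound.

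For \eqref{eq:1.21} you take a genuinely different route. The paper simply tests $\p_t\tilde Q[f_\ve]$ against $\tilde Q[f_\ve]$ to obtain a differential inequality $E'(t)\le CA(t)\sqrt{E(t)}$ with $E(t)=\|\tilde Q[f_\ve](\cdot,t)\|_{L^2}^2$ and $A(t)^2=\ve^{-2}\int f_\ve|\CR\mu_\ve[f_\ve]|^2$; integrating $(\sqrt{E})'\le \tfrac{C}{2}A$ and using $\int_0^T A\le\sqrt{T}\,C$ plus $E(0)\le C$ gives the bound directly, without touching the $\mathcal{T}_\ve$-part of the energy at all. Your argument instead splits $\tilde Q=(\tilde Q-\tilde Q*k_\ve)+\tilde Q*k_\ve$, controls the first piece by the energy bound $\|\mathcal{T}_\ve\tilde Q\|_{L^2}\le C$ together with $\|\mathcal{T}_\ve\|_{L^2\to L^2}\le C\ve^{-1/2}$, and the second piece by the fundamental theorem of calculus in $t$ and \eqref{eq:1.17}. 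Both are valid; the paper's is shorter and uses one less ingredient, while yours makes the role of the nonlocal energy term more visible (and incidentally recovers the quantitative estimate $\|\tilde Q-\tilde Q*k_\ve\|_{L^\infty_tL^2_x}\le C\sqrt\ve$ that the paper only proves later, in \eqref{20150626bound1}).
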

\begin{proof}
First, we prove
\begin{equation}\label{eq:1.16}
{\ve\| f_\ve\|^2_{L^\infty(\RR^d\times (0,T);L^2(\BS))}+  \|\CR f_\ve\|^2_{L^\infty(\RR^d;L^2(\BS\times (0,T)))}\leq C\ve\|f_{\ve}^{in}\|_{L^\infty(\RR^d;L^2(\BS))}^2+CT}.
\end{equation}
To this end, we test the   equation  \eqref{main:eqn} by $f_\ve$ and integrate by parts over $\BS$:
\begin{equation*}
  \begin{split}
  \ve\frac \ud{\ud t}\int_\BS f^2_\ve+  \int_\BS |\CR f_\ve|^2\ud m=&\int_\BS f_\ve\CR\CU_\ve[f_\ve]\cdot\CR f_\ve \ud m\\
  =&-\frac12\int_\BS \Delta_\BS\CU_\ve[f_\ve] f_\ve^2 \ud m \leq C\int_\BS  f^2_\ve\ud m.
  \end{split}
\end{equation*}
In the last step, we employed \eqref{bound-U}.
On the other hand, it follows from $\CR\cdot\CR=\Delta_\BS$ and the Nash inequality in  \cite{MR1704184}   that
\begin{equation}
{\|\varphi\|_{L^2(\BS)}^2\le C\|\sqrt{-\Delta_\BS}\varphi\|_{L^2(\BS)}\|\varphi\|_{L^1(\BS)}+C\|\varphi\|_{L^1(\BS)}^2},~\forall \varphi\in C^1(\BS).\label{eq:inter}
\end{equation}
Applying to $f_\ve$ leads to
\begin{equation*}
{\|f_\ve\|_{L^2(\BS)}^2\le C\|\sqrt{-\Delta_\BS}f_\ve\|_{L^2(\BS)}\|f_\ve\|_{L^1(\BS)}+C\|f_\ve\|_{L^1(\BS)}^2\leq C\(1+\|\CR f_\ve\|_{L^2(\BS)} \)}.
\end{equation*}
Combining the previous two inequalities, we arrive at
\begin{equation*}
\ve\frac \ud{\ud t}\int_\BS f^2_\ve\ud m+\frac 12\int_\BS |\CR f_\ve|^2\ud m\leq C.
\end{equation*}
 Integrating the above inequality in $t$   implies \eqref{eq:1.16}.
In order to obtain the higher order estimate \eqref{eq:1.29}, we rewrite \eqref{main:eqn} as
\begin{equation}\label{eq:1.28}
  \ve\pa_t{f_\ve}-\Delta_\BS f_\ve= \CR\cdot( f_\ve\CR\CU_\ve[f_\ve])=:g_\ve.
\end{equation}
then using \eqref{bound-U}
 \begin{equation*}
|g_\ve|    \leq C\(|\CR f_\ve|{+| f_\ve|}\)~a.e. ~(m,x,t)\in \BS\times\RR^d\times (0,T)
\end{equation*}
and thus
\[\|g_\ve\|^2_{L^\infty(\RR^d;L^2(\BS\times (0,T)))}\leq C\ve\|f_{\ve}^{in}\|_{L^\infty(\RR^d;L^2(\BS))}^2 .\]
Now we multiply \eqref{eq:1.28}  by $t\Delta_\BS f_\ve$ and follow the standard energy estimate:
\begin{align*}
&\frac 12\frac d{dt}\int_\BS t|\CR f_\ve|^2-\frac 12\int_\BS |\CR f_\ve|^2+\int_\BS t|\Delta_\BS f_\ve|^2\\
&\qquad=\int_\BS \sqrt{t}\Delta_\BS f_\ve\sqrt{t} g_\ve\leq \frac 12\int_\BS t|\Delta_\BS  f_\ve|^2+\frac 12\int_\BS t g_\ve^2.
\end{align*}
The  above two  estimates together lead  to \eqref{eq:1.29}.

To derive \eqref{eq:1.17}, we test \eqref{main:eqn} by any $\psi(m)\in C^\infty(\BS)$ and integrate by parts over $\BS$
\begin{equation}\label{eq:1.20}
  \begin{split}
     \p_t\int_\BS  f_\ve(m,x,t)\psi(m) \ud m &=-\frac 1\ve\int_\BS f_\ve\CR\mu_\ve[f_\ve]\cdot\CR\psi \ud m\\
     &=-\frac 1\ve\int_\BS \sqrt{ f_\ve}\CR\mu_\ve[f_\ve]\cdot \sqrt{f_\ve}\CR\psi\ud m.
  \end{split}
\end{equation}
    Applying the Cauchy-Schwarz inequality, we get
    \begin{equation*}
 \left|\p_t\int_\BS  f_\ve(m,x,t)\psi(m)\ud m \right|^2 \leq \frac 1{\ve^2}\int_\BS f_\ve|\CR\mu_\ve[f_\ve]|^2\ud m\int_\BS f_\ve|\CR\psi|^2\ud m.
    \end{equation*}
In particular, if we take $$\psi(m)=m_im_j-\tfrac 13\delta_{ij}~(1\leq i,j\leq 3)$$
in the above inequality and combine it with \eqref{push1}, then we arrive at
    \begin{equation*}
    \|\p_t (Q[f_\ve]*k_\ve)\|_{L^2(\RR^d\times (0,T))}= \| \p_t Q[f_\ve]*k_\ve\|_{L^2(\RR^d\times (0,T))}\leq   \|\p_t Q[f_\ve]\|_{L^2(\RR^d\times (0,T))}\leq C,
    \end{equation*}
which yields  \eqref{eq:1.17}.

To prove \eqref{eq:1.21}, we use \eqref{eq:1.20} again to get
\begin{equation*}
   \p_t Q[  f_\ve](x,t)-\p_t Q[f_{e_0}]=-\frac 1\ve\int_\BS \sqrt{ f_\ve}\CR\mu_\ve[f_\ve]\cdot \sqrt{f_\ve}\CR (m\otimes m-\tfrac 13I_3)\ud m.
\end{equation*}
Testing by  $Q[  f_\ve]-Q[f_{e_0}]$ and performing standard energy estimates leads to
\begin{equation}\label{eq:1.22}
  E'(t)\leq CA(t) \sqrt{E(t)}
\end{equation}
where \[E(t)=\|Q[  f_\ve]-Q[f_{e_0}]\|^2_{L^2(\RR^d)},~A^2(t)=\frac 1{\ve^2}\int_{\RR^d\times\BS} f_\ve|\CR\mu_\ve[f_\ve]|^2\ud m\ud x.\]
Solving differential inequality \eqref{eq:1.22} together with initial condition \eqref{assump:uniform} leads to \eqref{eq:1.21}.
\end{proof}

\section{Compactness of the second moments}

In this section, we study the compactness  and convergence of the second moments $Q[f_\ve]$ via the relative-energy estimate \eqref{push1}.
\begin{Proposition}\label{compactness1}
Modulo the extraction of a subsequence, it holds that for any $T>0$,
\begin{equation}\label{20150626claim1}
 Q[f_\ve]\xrightarrow{\ve\to 0} \Psi\quad \text{strongly in }~C([0,T];L_{loc}^2(\RR^{d})),
\end{equation}
for some  $\Psi\in L^\infty(0,T; H^1_{loc}(\RR^d))$. Moreover,
\begin{equation}\label{20150626claim2}
\nabla (Q[f_\ve]*k_\ve)\xrightarrow{\ve\to 0}\nabla \Psi\quad\text{weakly-{star} in}~L^\infty(0,T;L^2 (\RR^d)).
\end{equation}
\end{Proposition}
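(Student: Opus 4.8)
The plan is to invoke a suitable compactness argument of Aubin--Lions--Simon type in the variable pair $(x,t)$, using the uniform estimates already collected in the previous section. First I would record the two families of bounds that drive everything: from \eqref{eq:1.21} we know $Q[f_\ve]-Q[f_{e_0}]$ is bounded in $L^\infty(0,T;L^2(\RR^d))$, hence in particular $Q[f_\ve]$ is bounded in $L^\infty(0,T;L^2_{loc}(\RR^d))$; and from the modulated-energy identity \eqref{eq:ME} together with \eqref{assump:uniform} the quantity
\[
\frac{\alpha}{4\ve}\int_{\RR^d\times\RR^d}\big|Q[f_\ve](x,t)-Q[f_\ve](y,t)\big|^2 k_\ve(x-y)\ud x\ud y
\]
is bounded uniformly in $\ve$ and $t$. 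Using the operator $\mathcal{L}_\ve$ and $\mathcal{T}_\ve$ from Section 3, this second bound says exactly that $\|\mathcal{T}_\ve\tilde Q[f_\ve]\|_{L^\infty(0,T;L^2(\RR^d))}\le C$. Since by Lemma \ref{converge-T} the Fourier multiplier of $\mathcal{T}_\ve$ is, uniformly in $\ve$, comparable to $|\xi|$ near the origin and bounded below away from $0$ by a positive constant on any region $|\xi|\ge\rho$ (because $c_0|\xi|^2\hat k^2\le 1-\hat k$ and $\hat k<1$ for $\xi\neq0$), I can deduce a uniform $\dot H^1$-type bound: after a cut-off, $\nabla(Q[f_\ve]*k_\ve)$ and indeed $\nabla$ applied to a mollified version of $Q[f_\ve]$ are bounded in $L^\infty(0,T;L^2(\RR^d))$. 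This is where \eqref{assump1} is essential, and I expect the bookkeeping of low versus high frequencies to be the one genuinely technical point.

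Next I would handle the time regularity. Estimate \eqref{eq:1.17} gives $\|\p_t(Q[f_\ve]*k_\ve)\|_{L^2(\RR^d\times(0,T))}\le C$; combined with the spatial $H^1_{loc}$ bound on $Q[f_\ve]*k_\ve$ (which follows from the previous paragraph and the fact that convolution with $k_\ve$ does not lose derivatives) and the embedding $H^1_{loc}(\RR^d)\hookrightarrow\hookrightarrow L^2_{loc}(\RR^d)\hookrightarrow H^{-1}_{loc}(\RR^d)$, the Aubin--Lions--Simon lemma gives, up to a subsequence and on each compact $W\subset\RR^d$,
\[
Q[f_\ve]*k_\ve\xrightarrow{\ve\to0}\Psi\quad\text{strongly in }C([0,T];L^2(W))
\]
for some $\Psi$. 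To upgrade this to $Q[f_\ve]$ itself I would use that
\[
\|Q[f_\ve]-Q[f_\ve]*k_\ve\|_{L^\infty(0,T;L^2(\RR^d))}^2
\le C\ve\,\|\mathcal{T}_\ve\tilde Q[f_\ve]\|_{L^\infty(0,T;L^2(\RR^d))}^2 \le C\ve\xrightarrow{\ve\to0}0,
\]
again by \eqref{assump1} (the symbol bound $\frac{1-\hat k}{|\xi|^2\hat k^2}\le C$ at high frequency is not needed; what is needed is $|1-\hat k(\sqrt\ve\xi)|\le C\ve|\xi|^2\hat k(\sqrt\ve\xi)^2/c_0\le$ times the $\mathcal T_\ve$ symbol squared, so $\|Q-Q*k_\ve\|\le\sqrt{C\ve}\,\|\mathcal T_\ve\tilde Q\|$). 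Hence $Q[f_\ve]\to\Psi$ strongly in $C([0,T];L^2_{loc}(\RR^d))$ as well, which is \eqref{20150626claim1}. The limit $\Psi$ satisfies $\Psi-Q[f_{e_0}]\in L^\infty(0,T;L^2(\RR^d))$ and, by weak lower semicontinuity of the $L^2$-norm of the gradient against the uniform $\dot H^1$-bound, $\Psi\in L^\infty(0,T;H^1_{loc}(\RR^d))$.

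For \eqref{20150626claim2}, the uniform bound $\|\nabla(Q[f_\ve]*k_\ve)\|_{L^\infty(0,T;L^2(\RR^d))}\le C$ from the first paragraph lets me extract a weak-$\star$ limit $G\in L^\infty(0,T;L^2(\RR^d))$; since $Q[f_\ve]*k_\ve\to\Psi$ in the sense of distributions on $(0,T)\times\RR^d$ (a consequence of the strong $L^2_{loc}$ convergence just proved plus the $L^\infty_tL^2_x$ bound on $Q[f_\ve]*k_\ve-Q[f_{e_0}]$), passing to the limit in $\int(Q[f_\ve]*k_\ve)\,\p_j\varphi = -\int\p_j(Q[f_\ve]*k_\ve)\,\varphi$ identifies $G=\nabla\Psi$. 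I expect the main obstacle to be, as noted, the transfer of the single nonlocal coercivity estimate on $\mathcal{T}_\ve\tilde Q[f_\ve]$ into a genuine (local, cut-off) $H^1$ bound uniform in $\ve$: one must be careful that $\hat k$ vanishes nowhere and that the lower bound on the symbol degenerates only like $|\xi|^2$ at the origin, which is precisely the content of \eqref{assump1}, and that $Q[f_{e_0}]$ being constant in $x$ is used to replace $Q[f_\ve]$ by $\tilde Q[f_\ve]$ without changing any gradient. Everything else is a routine application of Aubin--Lions together with the elementary inequality $\|u-u*k_\ve\|_{L^2}^2\le C\ve\|\mathcal{T}_\ve u\|_{L^2}^2$.
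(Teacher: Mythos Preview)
Your approach is essentially the paper's: Aubin--Lions on $Q[f_\ve]*k_\ve$ using the time bound \eqref{eq:1.17} and a uniform $\dot H^1$ bound coming from the nonlocal energy, then transfer to $Q[f_\ve]$ via $\|Q-Q*k_\ve\|_{L^2}\le C\sqrt\ve$, and finally identify the weak-$\star$ limit of the gradients. Two clarifications are worth making, since you flag the Fourier step as the ``genuinely technical point'' and partly misattribute the inequalities.

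First, no low/high-frequency splitting or spatial cut-off is needed for the $\dot H^1$ bound. Writing $u_\ve=Q[f_\ve]$ and using Plancherel together with \eqref{assump1} (applied at $\sqrt\ve\xi$) directly yields
\[
\int_{\RR^d}|\nabla(u_\ve*k_\ve)|^2\ud x = 4\pi^2\int_{\RR^d}|\xi|^2\hat k(\sqrt\ve\xi)^2|\hat u_\ve|^2\ud\xi
\le \frac{4\pi^2}{c_0\ve}\int_{\RR^d}\big(1-\hat k(\sqrt\ve\xi)\big)|\hat u_\ve|^2\ud\xi
= \frac{4\pi^2}{c_0}\,\|\mathcal T_\ve\tilde Q[f_\ve]\|_{L^2}^2,
\]
which is uniformly bounded by \eqref{eq:ME}. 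This is exactly the paper's bound \eqref{bound2}; your worry about cut-offs and degeneration is unfounded.

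Second, the inequality $\|u_\ve-u_\ve*k_\ve\|_{L^2}^2\le C\ve\|\mathcal T_\ve u_\ve\|_{L^2}^2$ does \emph{not} come from \eqref{assump1} (your parenthetical invokes the wrong direction of that inequality). It follows from the elementary bound $(1-\hat k)^2\le 2(1-\hat k)$, valid because $|\hat k|\le 1$, which gives
\[
\int_{\RR^d}|1-\hat k(\sqrt\ve\xi)|^2|\hat u_\ve|^2\ud\xi
\le 2\int_{\RR^d}\big(1-\hat k(\sqrt\ve\xi)\big)|\hat u_\ve|^2\ud\xi
= 2\ve\,\|\mathcal T_\ve u_\ve\|_{L^2}^2.
\]
This is the paper's bound \eqref{bound1}. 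With these two Fourier inequalities in hand, the rest of your outline goes through verbatim.
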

\begin{proof}
The assertion \eqref{20150626claim1} is a consequence of the following estimate
\begin{equation}\label{20150626bound1}
\sup_{0\leq t\leq T}\frac 1\ve\int_{\RR^d} |Q[f_\ve]*k_\ve-Q[f_\ve]|^2 \ud x+\sup_{0\leq t\leq T}\int_{\RR^d} |\nabla (Q[f_\ve]*k_\ve)|^2 \ud x\leq  C.
\end{equation}
Actually, it follows from \eqref{20150626bound1}, \eqref{eq:1.17} and the Aubin-Lions lemma (see for instance \cite{Simon1987}) that, up to the extraction of a subsequence, $\{Q[f_\ve]*k_\ve\}_{\ve>0}$ is compact in $C([0,T];L_{loc}^2(\RR^d))$ and this together with the following inequality implies the strong convergence of $u_\ve:=Q[f_\ve](x,t)$ in $L^\infty(0,T;L_{loc}^2 (\RR^d))$:
\begin{equation*}
 |u_\ve-u_\sigma|\leq |u_\ve-u_\ve*k_\ve|+|u_\sigma-u_\sigma*k_\sigma|+|u_\sigma *k_\sigma-u_\ve*k_\ve|.
\end{equation*}

For the assertion \eqref{20150626claim2}, we have
\begin{equation}
  \nabla (u_\ve* k_\ve)\xrightarrow{\ve\to 0} \Phi=\{\Phi_j\}_{1\leq j\leq d}~\text{weakly-{star} in}~L^\infty(0,T;L^2 (\RR^{d})).\nonumber
\end{equation}
On the other hand, for any $\varphi(x,t)\in C_c^\infty(\RR^d\times (0,T);\RR^{3\times 3})$,
\begin{equation*}
 \begin{split}
    -\int_0^T\int_{\RR^d}\p_j (u_\ve* k_\ve):\varphi \ud x\ud t&=\int_0^T\int_{\RR^d} (u_\ve* k_\ve):\p_j\varphi\ud x\ud t\\
    &=\int_0^T\int_{\RR^d} u_\ve :(k_\ve*\p_j\varphi)\ud x\ud t.
  \end{split}
\end{equation*}
Taking $\ve\to 0$ leads to
\begin{equation*}
  -\int_0^T\int_{\RR^d}\varphi :\Phi_j\ud x\ud t =\int_0^T\int_{\RR^d} \Psi:\p_j\varphi\ud x\ud t,
\end{equation*}
which implies
\begin{equation*}
\nabla \Psi=\Phi\in L^\infty(0,T;L^2(\RR^d))
\end{equation*}
and \eqref{20150626claim2} follows.

The proof of  \eqref{20150626bound1} was   motivated by \cite{AlbertiBellettini1998a,LiuWang2016}.  First, we infer from the assumption \eqref{assump1} for the kernel function $k(x)$ that
\begin{equation}\label{bound1}
\begin{split}
&\int_{\RR^d} |u_\ve*k_\ve-u_\ve|^2 \ud x
=\int_{\RR^d}\left|\(1-\hat{k} (\sqrt{\ve}\xi)\)\hat{u}_\ve(\xi)\right|^2\ud \xi\\
&\le  2\int_{\RR^d}\left|\sqrt{1-\hat{k} (\sqrt{\ve}\xi)}\hat{u}_\ve(\xi)\right|^2\ud \xi
=2\int_{\RR^d}\hat{u}_\ve(\xi):\hat{u}_\ve(\xi)- \hat{k} (\sqrt{\ve}\xi) \hat{u}_\ve(\xi):\hat{u}_\ve(\xi)\ud \xi\\
&=\int_{\RR^d\times\RR^d} k_\ve(x-y)|u_\ve(x)-u_\ve(y)|^2\ud x\ud y.
\end{split}
\end{equation}
Similarly, we infer from (\ref{assump1}) that
\begin{multline}\label{bound2}
  \int_{\RR^d} |\nabla (k_\ve*u_\ve)|^2 \ud x=4\pi^2\int_{\RR^d}\left|\xi\hat{k}(\sqrt{\ve}\xi)\hat{u}_\ve(\xi)\right|^2\ud \xi\\
  \leq \frac C\ve \int_{\RR^d}\left|\sqrt{(1-\hat{k} (\sqrt{\ve}\xi))}\hat{u}_\ve(\xi)\right|^2 \ud\xi
      =\frac C\ve   \int_{\RR^d\times\RR^d}  |u_\ve(x)-u_\ve(y)|^2k_\ve(x-y) \ud x \ud y.
\end{multline}
Then we can combine \eqref{bound1}-\eqref{bound2} with \eqref{push1} to get \eqref{20150626bound1}.
\end{proof}

The following proposition gives the characterization of the limit function $\Psi$ in Proposition \ref{compactness1}.
\begin{Proposition}\label{prop:compact2}
For any $T>0$ and compact set $W\subseteq \RR^d$, modulo the extraction of a subsequence,   it holds that   $f_\ve$ is uniformly integrable on $\BS\times W\times (0,T)$ and
  \begin{equation*}
    {f_\ve}\xrightarrow{\ve\to 0} {f}_0\quad \text{weakly in}\,\,L^1\big(\BS\times W\times (0,T)\big),
  \end{equation*}
  where  ${f}_0(m,x,t)$ is given by
  \begin{equation*}
    {f}_0(m,x,t)=h_{n(x,t)}(m):=\frac1Z e^{\eta(m\cdot n(x,t))^2}
  \end{equation*}
for some unit vector field $n:(0,T)\times\RR^d\mapsto \BS$ such that
 \begin{equation}\label{eq:1.27}
{n-e_0\in L^\infty(0,T;H^1(\RR^d)),~n_t\in L^2((0,T)\times\RR^d).}
 \end{equation}
 In addition, we have $$\Psi=Q[{f_0}]=S_2(n\otimes n-\tfrac 13I_3)~ a.e.~ in ~\RR^d\times (0,T)$$ where $S_2\neq 0$ is defined at \eqref{eq:1.58} and
\begin{equation}\label{limit:Qf}
 Q[f_\ve]\xrightarrow{\ve\to 0} Q[{f_0}]=\Psi\quad \text{strongly in }~C([0,T];L_{loc}^2(\RR^{d})).
\end{equation}
\end{Proposition}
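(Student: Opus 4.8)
The plan is to upgrade the compactness of $Q[f_\ve]$ from Proposition \ref{compactness1} to a statement about $f_\ve$ itself, using the energy dissipation law \eqref{push1} together with the classification of minimizers of the Maier--Saupe energy (Lemma \ref{lem:minimizer}). First I would fix a bounded domain $W\subseteq\RR^d$ and a final time $T$. From \eqref{push1} and the splitting \eqref{eq:1.68} we have, uniformly in $\ve$,
\[
\frac1\ve\int_0^T\int_{\RR^d}\big(\mathcal{E}_0[f_\ve]-E_0\big)\ud x\ud t\leq C,\qquad
\|f_\ve\|_{L^\infty(\RR^d\times(0,T);L^2(\BS))}^2\leq C\ve^{-1},
\]
where the second bound comes from \eqref{eq:1.16} and the assumption \eqref{assump:uniform}. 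The $L^2(\BS)$ bound controls the negative part of $f_\ve\log f_\ve$, so $\int_{W\times\BS\times(0,T)}f_\ve\log f_\ve$ is bounded and in particular $f_\ve$ is uniformly integrable on $W\times\BS\times(0,T)$ (de la Vall\'ee-Poussin). Extracting a subsequence, $f_\ve\rightharpoonup f_0$ weakly in $L^1(W\times\BS\times(0,T))$ with $f_0\in\mathscr{H}$ (Lemma \ref{lem:convex}), and by weak lower semicontinuity of the (convex) entropy together with the continuity of the quadratic term under the strong $L^2$ convergence of $Q[f_\ve]$, we get $\mathcal{E}_0[f_0]=E_0$ a.e. on $W\times(0,T)$; hence $f_0$ is a global minimizer of the homogeneous Maier--Saupe energy for a.e.\ $(x,t)$. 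By Lemma \ref{lem:minimizer}, $f_0(\cdot,x,t)=h_{n(x,t)}$ for some measurable unit-vector field $n$, and consequently $\Psi=Q[f_0]=S_2(n\otimes n-\tfrac13 I_3)$ by Lemma \ref{uniaxial1}. Since $S_2\neq0$, the field $n$ is determined (up to sign) by $\Psi$, so $n$ inherits the regularity of $\Psi$: from $\Psi\in L^\infty(0,T;H^1_{loc})$ and $\Psi-Q[f_{e_0}]\in L^\infty(0,T;L^2(\RR^d))$ (Proposition \ref{compactness1} and \eqref{eq:1.21}), together with $\p_t\Psi\in L^2$ coming from \eqref{eq:1.17} and the passage to the limit, one recovers $n-e_0\in L^\infty(0,T;H^1(\RR^d))$ and $n_t\in L^2((0,T)\times\RR^d)$, i.e.\ \eqref{eq:1.27}. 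Finally \eqref{limit:Qf} is just \eqref{20150626claim1} with the limit identified as $Q[f_0]$.

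The main subtlety — and the step I expect to be the real obstacle — is the identification $\mathcal{E}_0[f_0]=E_0$, because the quadratic interaction term is \emph{not} convex and the entropy is only weakly lower semicontinuous, so one cannot directly conclude $\mathcal{E}_0[f_0]\leq\liminf\mathcal{E}_0[f_\ve]$. The resolution is that the interaction term $\frac\alpha3-\frac\alpha2|Q[f]|^2$ is \emph{continuous} with respect to the $L^2_{loc}$ topology on $Q$, and Proposition \ref{compactness1} already provides $Q[f_\ve]\to\Psi=Q[f_0]$ \emph{strongly} in $C([0,T];L^2_{loc})$, so this term passes to the limit exactly; combined with the weak lower semicontinuity of the entropy this yields $\int_{W\times(0,T)}\mathcal{E}_0[f_0]\leq\liminf\int_{W\times(0,T)}\mathcal{E}_0[f_\ve]$. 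Since $\mathcal{E}_0[f_\ve]-E_0\geq0$ pointwise and its $\ve$-weighted integral is $O(\ve)$, the $\liminf$ is $\leq E_0\cdot|W|\cdot T$, forcing $\mathcal{E}_0[f_0]=E_0$ a.e. A secondary point requiring care is measurable selection of $n$ from $\Psi$: because $\Psi(x,t)$ is a rank-one-plus-multiple-of-identity tensor with the correct eigenvalue $S_2>0$ for a.e.\ $(x,t)$, the leading eigenvector is well defined up to sign, and a standard measurable-selection argument (or simply defining $n\otimes n$, which is globally well defined, and checking the Sobolev bounds for $n\otimes n$ transfer to a local lift of $n$) gives a representative satisfying \eqref{eq:1.27}; the sign ambiguity is harmless since the target equation \eqref{eq:1.44} and the weak formulation are invariant under $n\mapsto -n$.

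One more item to keep honest: the uniform integrability and hence the weak-$L^1$ limit are a priori only along a subsequence and on each fixed compact $W$; a diagonal argument over an exhausting sequence of compacts $W_j\uparrow\RR^d$ and times $T_j\uparrow\infty$ produces a single subsequence along which all the stated convergences hold simultaneously, which is implicit in the statement. I would also remark that the bound $\|f_\ve^{in}\|_{L^\infty(\RR^d;L^2(\BS))}^2\leq C\ve^{-1}$ in \eqref{assump:uniform} is used precisely to keep $\ve\|f_\ve\|_{L^\infty L^2}^2$ bounded via \eqref{eq:1.16}, which is what makes the entropy defect $\int(f_\ve\log f_\ve)_-$ controlled uniformly; without it the uniform integrability of $f_\ve$ would fail. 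With these pieces in place the proposition follows.
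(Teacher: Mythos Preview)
Your overall strategy matches the paper's: entropy bound $\Rightarrow$ uniform integrability $\Rightarrow$ weak $L^1$ limit, then identify the limit as a local equilibrium via lower semicontinuity of the entropy combined with the strong $L^2$ compactness of $Q[f_\ve]$, and finally read off the regularity of $n$ from that of $\Psi=Q[f_0]$. Your handling of the identification $\mathcal{E}_0[f_0]=E_0$ is essentially the paper's argument.

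There is, however, a genuine gap in the regularity step, namely in deducing $n-e_0\in L^\infty(0,T;L^2(\RR^d))$. From $\Psi-Q[f_{e_0}]\in L^\infty(0,T;L^2(\RR^d))$ you obtain only $n\otimes n-e_0\otimes e_0\in L^\infty(0,T;L^2(\RR^d))$, i.e.\ $1-(n\cdot e_0)^2\in L^1(\RR^d)$. This does \emph{not} imply $n-e_0\in L^2(\RR^d)$: writing $\phi=n\cdot e_0$, you have $1-\phi^2=(1-\phi)(1+\phi)\in L^1$, but nothing prevents $\phi$ from being close to $-1$ on a set of infinite measure, in which case $1-\phi\notin L^1$. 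This is not a mere global sign ambiguity fixable by $n\mapsto-n$; it is the question of whether $n$ has a well-defined far-field value at all. The paper closes this gap with a separate lemma: using $\nabla\phi\in L^2(\RR^d)$ (which you do have, from $\nabla\Psi\in L^2$), one sets $u=\phi-\phi^3/3$, notes $\nabla u=(1-\phi^2)\nabla\phi\in L^1$ so $u\in BV$, and then the coarea formula together with the relative isoperimetric inequality forces some sublevel set $\{\phi<\delta\}$ (or its complement) to have finite measure, whence $1-\phi\in L^1$ or $1+\phi\in L^1$. Without this argument, or something equivalent, the claim $n-e_0\in L^\infty(0,T;L^2)$ in \eqref{eq:1.27} is unproven.

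Two smaller points. First, the lift from $\Psi=S_2(n\otimes n-\tfrac13 I_3)\in H^1_{loc}$ to $n\in H^1_{loc}$ is the Ball--Zarnescu orientability theorem, which the paper invokes explicitly; your ``standard measurable-selection argument'' understates what is needed, since a merely measurable eigenvector field need not lie in $H^1$. Second, your remark that the $L^2(\BS)$ bound controls the negative part of $f_\ve\log f_\ve$ is unnecessary (that part is bounded by $4\pi|W|T/e$ pointwise); the upper bound on the entropy comes directly from $\mathcal{E}_0[f_\ve]\le E_0+C\ve$ and $|Q[f_\ve]|\le 2/3$ via \eqref{energy:MS1}, as in the paper.
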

\begin{proof}
First of all, we show that
  \begin{equation}\label{extension6}
     {f_\ve}\xrightarrow{\ve\to 0} {f}_0\quad \text{weakly in}\,\,L^1\big(\BS\times W\times (0,T)\big),
  \end{equation}
  for some local equilibrium distribution $ {f_0}(m,x,t)$. Indeed, we   deduce from  \eqref{push1} that
\begin{equation}
{\sup_{t\in (0,T)}\int_{\RR^d} (\mathcal{E}_0[f_\ve](x,t)-{E}_0 )\ud x \le C \ve},\label{eq:E0-conv}
\end{equation}
and thus for any compact set $W\subset\RR^d$,
\begin{equation*}
  \sup_{t\in (0,T)}\int_{W} \mathcal{E}_0[f_\ve](x,t) \ud x \leq C|W|+C\ve.
\end{equation*}
Thanks to \eqref{energy:MS1} and the uniform bound (\ref{bound-Q}) for $ |Q[f_\ve](x,t)|$, we obtain the entropy estimate
\begin{equation*}
 \sup_{t\in (0,T)} \int_{W\times\BS} {f_\ve}\ln  {f_\ve}\ud x\ud m\leq   C(|W|+1).
\end{equation*}
Then Lemma \ref{lem:convex} leads to the uniformly integrability of $\{f_\ve\}_{\ve>0}$ and \eqref{extension6}.

To show that $ {f_0}$ is a local equilibrium distribution, we deduce from \eqref{eq:E0-conv} and the fact that $f_{e_0}$ is a global minimizer of $ \mathcal{E}_0$ (by Lemma \ref{lem:minimizer}) that
\begin{equation*}
0\leq \sup_{t\in (0,T)} \lim_{\ve\to 0}\int_{\RR^d}\Big(\mathcal{E}_0[ {f_\ve}](x,t)-{E}_0]\Big)\ud x= 0.
\end{equation*}
{In view of \eqref{energy:MS1}, Lemma \ref{lem:convex} and strong compactness of $Q[f_\ve]$ \eqref{20150626claim1}, we can exchange  the limit $\ve\to 0$  and the integral in the above inequality and get}
\begin{equation*}
\mathcal{E}_0[f_0(\cdot,x, t)]= E_0,\quad\text{ for a.e. } (x,t)\in \RR^d\times (0,T).
\end{equation*}
Then Lemma \ref{lem:minimizer} ensures that there exists some function $n:\RR^d\times (0,T)\mapsto \BS$ such that
\begin{equation*}
   {f_0}(m,x,t)=\frac{e^{\eta(m\cdot n(x,t))^2}}{\int_\BS e^{\eta(m\cdot n(x,t))^2} \ud m} \quad  \text{a.e. } (x,t)\in\RR^d\times (0,T).
\end{equation*}
On the other hand, \eqref{extension6} imply that
\begin{equation*}
   Q[ {f_\ve}]\xrightarrow{\ve\to 0} Q[ {f_0}], \quad \text{weakly in }~L^1(W\times(0,T)).
 \end{equation*}
Together with \eqref{compactness1}, we obtain $\Psi=Q[ {f_0}]$ and \eqref{limit:Qf} follows.

 Consequently $f_0$ is a local equilibrium distribution whose $Q$-tensor belongs to $H_{loc}^1({\RR^d})$, for almost every $t\in [0,T]$. This together with   the orientability theorem in \cite[Theorem 2]{BallZarnescu2011} implies that $n(x,t)\in L^\infty\big(0,T;H_{loc}^1(\RR^d, \BS)\big).$
To show \eqref{eq:1.27}, it follows from \eqref{eq:1.21} and \eqref{limit:Qf} that, up to the extraction of a subsequence,
 \begin{equation*}
   Q[ {f_\ve}]-Q[f_{e_0}]\xrightarrow{\ve\to 0} Q[ {f_0}]-Q[f_{e_0}] \quad \text{weakly-{star} in }~L^\infty(0,T;L^2(\RR^{d })).
 \end{equation*}
 Then the weakly lower semicontinuity implies
 \[\|Q[ {f_0}]-Q[f_{e_0}]\|_{L^\infty(0,T;L^2(\RR^d))}\leq C.\]
 Since $f_0,f_{e_0}$ are both equilibrium solutions, we induce from Lemma \ref{uniaxial1} that
\begin{equation}\label{eq:1.62}
  \|n\otimes n-e_0\otimes e_0]\|_{L^\infty(0,T;L^2(\RR^d))}\leq C.
\end{equation}
 On the other hand,
  \eqref{limit:Qf} together with \eqref{20150626claim2} and \eqref{eq:1.17} implies
 \begin{equation*}
   \begin{split}
&      \nabla(Q[f_\ve]* k_\ve)\xrightarrow{\ve\to 0} \nabla \Psi, ~\text{weakly-{star} in}~L^\infty(0,T;L^2(\RR^d)),\\
&  \p_t(Q[f_\ve]* k_\ve)\xrightarrow{\ve\to 0} \p_t \Psi, ~\text{weakly in}~L^2((0,T)\times\RR^d).
   \end{split}
 \end{equation*}
  These together with $Q[f_0]=\Psi=S_2(n\otimes n-\tfrac 13 I_3)$ (from Lemma \ref{uniaxial1}) implies   \eqref{eq:1.27} except $n-e_0\in L^\infty(0,T;L^2(\RR^d))$. To complete the proof of \eqref{eq:1.27}, let \[\phi(t,x)=n(t,x)\cdot e_0\in[-1,1].\] Then we have from \eqref{eq:1.62} and the identity
   \[(n\otimes n-e_0\otimes e_0):(n\otimes n-e_0\otimes e_0)=2-2\phi^2\]
   that \[\nabla\phi\in L^\infty(0,T;L^2(\RR^d)),\quad 1-\phi^2\in L^\infty(0,T;L^1(\RR^d)).\]
By the following lemma, we have
\[1-\phi\in L^\infty(0,T;L^1(\RR^d)),~\text{or}~1+\phi\in L^\infty(0,T;L^1(\RR^d)).\]
This implies $n-e_0\in L^\infty(0,T;L^2(\RR^d))$ or $n+e_0\in L^\infty(0,T;L^2(\RR^d))$.
The second case can be  reduced to the first one if we replace $n$ by $-n$.  Thus the proof of the proposition is completed.
\end{proof}

 \begin{Lemma}
  Assume that $\phi:\RR^d\to[-1,1]$ satisfies  $\nabla\phi,  \sqrt{1-\phi^2}\in L^2(\RR^d)$. Then $1-\phi\in L^1(\RR^d)$ or $1+\phi\in L^1(\RR^d)$.
\end{Lemma}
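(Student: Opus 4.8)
The plan is to reduce the statement to a question about a bounded scalar field and then exploit the critical Sobolev embedding. First I would set $u:=\tfrac{1+\phi}{2}\colon\RR^d\to[0,1]$, so that $\nabla u=\tfrac12\nabla\phi\in L^2(\RR^d)$; since $1-\phi^2=4u(1-u)$, an elementary case check gives $v:=\min(u,1-u)\le\tfrac12(1-\phi^2)\in L^1(\RR^d)$. The desired conclusion is exactly that $u\in L^1(\RR^d)$ or $1-u\in L^1(\RR^d)$. I argue by contradiction, assuming both integrals are infinite. Splitting $\int_{\RR^d}u=\int_{\{u\le1/2\}}v+\int_{\{u>1/2\}}u$ and using $v\in L^1$, the divergence of $\int u$ forces $|\{u>1/2\}|=\infty$, and symmetrically $|\{u<1/2\}|=\infty$. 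Since $v\ge\tfrac14$ on $\{1/4\le u\le3/4\}$, that set has measure $\le4\|v\|_{L^1}<\infty$, and combined with the previous sentence this yields
\[
|\{u>3/4\}|=|\{u<1/4\}|=\infty,\qquad |\{1/4<u<3/4\}|<\infty .
\]

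Next I would introduce the Lipschitz truncation $w:=\max\bigl(0,\min(u-\tfrac14,\tfrac12)\bigr)$, so that $0\le w\le\tfrac12$, $w\equiv\tfrac12$ on $\{u\ge3/4\}$, $w\equiv0$ on $\{u\le1/4\}$, and $\nabla w=\nabla u\,\mathbf{1}_{\{1/4<u<3/4\}}$ a.e. The crucial point is that $|\nabla w|\le|\nabla u|$ is square-integrable \emph{and} supported on the finite-measure set $\{1/4<u<3/4\}$, so by the Cauchy--Schwarz inequality
\[
\|\nabla w\|_{L^1(\RR^d)}\le\|\nabla u\|_{L^2(\RR^d)}\,\bigl|\{1/4<u<3/4\}\bigr|^{1/2}=:K<\infty .
\]
In other words, the degeneracy of the weight $1-\phi^2$ near $\pm1$ forces the transition zone to be thin, which promotes the $L^2$ gradient bound to an $L^1$ gradient bound and places $w$ at the endpoint of the Sobolev scale.

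Finally I would apply the scale-invariant Sobolev--Poincar\'e inequality on cubes: for $d\ge2$ and a cube $Q$ of side $R$ one has $\|w-w_Q\|_{L^{d/(d-1)}(Q)}\le C_d\|\nabla w\|_{L^1(Q)}$ with $C_d$ independent of $R$, where $w_Q$ denotes the mean of $w$ over $Q$. Taking $Q=Q_R$ centered at the origin and setting $a:=w_{Q_R}\in[0,\tfrac12]$, the left-hand side is bounded below, using $w\equiv\tfrac12$ on $\{u\ge3/4\}$, $w\equiv0$ on $\{u\le1/4\}$, and the inequality $|\tfrac12-a|+|a|\ge\tfrac12$, by $\tfrac14\,\min\bigl(|\{u\ge3/4\}\cap Q_R|,\ |\{u\le1/4\}\cap Q_R|\bigr)^{(d-1)/d}$. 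The right-hand side is at most $C_dK$ for every $R$, whereas the minimum on the left tends to $\infty$ as $R\to\infty$ because both $\{u\ge3/4\}$ and $\{u\le1/4\}$ have infinite measure. This contradiction proves the lemma. (The hypothesis $d\ge2$ is genuinely needed, and is available since $d\in\{2,3\}$ here: for $d=1$ the function $\phi=\tanh$ satisfies all the assumptions, yet neither $1-\phi$ nor $1+\phi$ lies in $L^1(\RR)$.)

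The step I expect to be the real obstacle is conceptual rather than computational: a priori $u$ and $1-u$ are only known to be bounded, not integrable, and $\nabla u$ only lies in $L^2$, not $L^1$, so every purely local estimate is consistent with both $u$ and $1-u$ failing to be integrable. The key is to identify the single quantity that \emph{is} finite --- the measure of the transition zone $\{1/4<u<3/4\}$, which is finite precisely because of the degenerate weight $1-\phi^2$ --- and to recognize that its finiteness upgrades the gradient bound to the critical exponent $p=1$, where a Sobolev-type inequality forbids a bounded function from having two distinct ``values at infinity'' on sets of infinite measure. Once that is seen, the remaining steps are routine.
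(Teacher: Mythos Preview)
Your proof is correct and reaches the same endpoint as the paper --- an $L^1$ gradient bound combined with an isoperimetric-type inequality that forbids a bounded function from having two distinct values at infinity on sets of infinite measure --- but the execution is genuinely different. The paper makes the Modica--Mortola substitution $u=\phi-\tfrac{\phi^3}{3}$, so that $\nabla u=(1-\phi^2)\nabla\phi$ and the $L^1$ gradient bound follows in one line from Cauchy--Schwarz; it then applies the coarea formula to find a level set of finite perimeter and invokes the relative isoperimetric inequality on balls. You instead work directly with the transition zone $\{1/4<u<3/4\}$, observe that its finite measure (from $v\in L^1$) promotes the $L^2$ gradient of the truncation $w$ to an $L^1$ bound, and then close with the scale-invariant Sobolev--Poincar\'e inequality on cubes. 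Your route avoids the coarea formula and the language of perimeter measures, which makes it slightly more self-contained; the paper's route makes the geometric content (a level set of finite perimeter separating the two phases) more transparent and connects the lemma to standard phase-transition arguments. Your remark that $d\ge 2$ is essential, with the $\tanh$ counterexample in $d=1$, is a nice addition that the paper does not make explicit.
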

\begin{proof}
Let $u(x)=\phi-\frac{\phi^3}3$ and $B_R=\{x\in\RR^d: |x|<R\}$. Obviously we have $u\in L^1(B_R)$. On the other hand it holds that
\begin{align*}
\int_{\RR^d}|\nabla u | \ud x \le \int_{\RR^d} |\nabla\phi|(1-\phi^2) \ud x \le \|\nabla \phi\|_{L^2(\RR^d)}\| 1-\phi^2 \|_{L^2(\RR^d)}.
\end{align*}
Thus $u\in BV(B_R)$. Let $E_t=\{x: u>t\}$ and $\|\partial E_t\|$ be the perimeter measure of $E_t$. Then it follows from the coarea-formula that
\begin{align*}
 \int_{-2/3}^{2/3} \|\partial E_t\|(B_R) \ud t  = \int_{B_R}|\nabla u| \ud x.
\end{align*}
Sending $R\to+\infty$, we have $\int_{-2/3}^{2/3} \|\partial E_t\|(\RR^d) \ud t <+\infty$.
Therefore, there exists $t\in (-\tfrac2 3,\tfrac 23)$ such that $  \|\partial E_t\|(\RR^d)<+\infty$ .
If we denote $|A|$ the Lebesgue measure of $A\subset\RR^d$, then it follows from the relative isoperimetric inequality \cite[Chapter 5]{Evans} that for any $R$
\begin{align*}
 \min\{ |E_t\cap B_R|,  |E_t^c\cap B_R|\}^{1-\frac 1d} \le C \|\partial E_t\|(B_R)\le C \|\partial E_t\|(\RR^d).
\end{align*}
Taking $R\to+\infty$ in the above inequalities leads to
\begin{align*}
|E_t|<+\infty, \text{ or } |E_t^c| <+\infty.
\end{align*}
Let $\delta$ be the unique number in $(-1, 1)$ such that $\delta-\delta^3/3=t$. Then
\begin{align*}
|\{\phi(x)<\delta\}|<+\infty, \text{ or } |\{\phi(x)\ge \delta\}| <+\infty.
\end{align*}
In the first case, we have
\begin{align*}
  \int_{\RR^d}(1-\phi) \ud x~ &=   \int_{\{\phi<\delta\}}(1-\phi) \ud x + \int_{\{\phi\ge \delta\}}(1-\phi) \ud x\\
  &\le 2 |\{\phi< \delta\}| +\frac{1}{1+\delta} \int_{\{\phi\ge \delta\}}(1-\phi^2) \ud x <+\infty.
\end{align*}
One can similarly obtain $  \int_{\RR^d}(1+\phi) \ud x<+\infty$ in  the other case and the lemma is proved.
\end{proof}

The following two lemmas are concerned with the properties of $\mathcal{T}_\ve$ and will be employed in the rest of the work.
Though the proof can be found in \cite{LiuWang2016} (except \eqref{eq:1.24}), we present them here for completeness.
\begin{Lemma}\label{lem:commu-l2}
For any $\varphi\in C_c^\infty(\RR^d)$, there exists a constant $C$ depending on $\varphi(x)$ but not on $\ve$ such that
\begin{equation}\label{convolu1}
  \|[\mathcal{T}_\ve, \varphi(x)] u \|_{L^2(\RR^d)}\leq C\|u\|_{L^2(\RR^d)}.
\end{equation}
\end{Lemma}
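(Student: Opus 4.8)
The plan is to move to the Fourier side and exploit the global Lipschitz bound for the symbol function $h$ furnished by Lemma~\ref{lipschitz1}. Recall from \eqref{defteps} that $\mathcal{T}_\ve$ is the Fourier multiplier operator with the $\RR^d$-valued symbol $m_\ve(\xi):=\tfrac{1}{\sqrt\ve}h(\sqrt\ve\,\xi)$. With the normalization $\hat u(\xi)=\int_{\RR^d}u(x)e^{-2\pi i x\cdot\xi}\ud x$ adopted in the paper, multiplication turns into convolution, $\widehat{\varphi u}=\hat\varphi*\hat u$, so that for every $\xi\in\RR^d$,
\[
\widehat{[\mathcal{T}_\ve,\varphi]u}(\xi)=\int_{\RR^d}\big(m_\ve(\xi)-m_\ve(\eta)\big)\,\hat\varphi(\xi-\eta)\,\hat u(\eta)\,\ud\eta .
\]

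First I would estimate the symbol increment. The prefactor $1/\sqrt\ve$ in $m_\ve$ is precisely tuned to the parabolic scaling $\sqrt\ve$ inside $h$, so Lemma~\ref{lipschitz1} gives, with $L$ the Lipschitz constant of $h$,
\[
|m_\ve(\xi)-m_\ve(\eta)|=\tfrac{1}{\sqrt\ve}\,|h(\sqrt\ve\,\xi)-h(\sqrt\ve\,\eta)|\leq \tfrac{1}{\sqrt\ve}\,L\,|\sqrt\ve\,\xi-\sqrt\ve\,\eta|=L\,|\xi-\eta|,
\]
uniformly in $\ve>0$. Substituting this bound into the identity above yields the pointwise estimate $|\widehat{[\mathcal{T}_\ve,\varphi]u}(\xi)|\leq L\,(w*|\hat u|)(\xi)$ with $w(\zeta):=|\zeta|\,|\hat\varphi(\zeta)|$.

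I would then conclude via Young's inequality and Plancherel's theorem. Since $\varphi\in C^\infty_c(\RR^d)$, its Fourier transform is a Schwartz function, hence $w\in L^1(\RR^d)$ with $\|w\|_{L^1(\RR^d)}$ depending only on $\varphi$; Young's inequality gives $\|\widehat{[\mathcal{T}_\ve,\varphi]u}\|_{L^2(\RR^d)}\leq L\,\|w\|_{L^1(\RR^d)}\,\|\hat u\|_{L^2(\RR^d)}$, and Plancherel converts this into \eqref{convolu1} with $C=L\,\big\||\xi|\,\hat\varphi(\xi)\big\|_{L^1(\RR^d)}$, which is manifestly independent of $\ve$.

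I do not expect a genuine obstacle; the only points deserving care are the bookkeeping of the Fourier normalization (so that $\widehat{\varphi u}=\hat\varphi*\hat u$ holds with no spurious constants) and the observation that the $1/\sqrt\ve$ factor exactly compensates the dilation inside $h$, which is what renders the bound $\ve$-uniform. The same computation — with $|\xi|\,\hat\varphi$ replaced by $|\xi|^2\,\hat\varphi$ or by suitable derivatives of $\hat\varphi$ — also underlies the companion estimates for $\mathcal{T}_\ve$ used in the sequel, so it is worth recording the argument once.
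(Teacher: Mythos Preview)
Your proof is correct and follows essentially the same approach as the paper: both pass to the Fourier side, use the Lipschitz bound on $h$ from Lemma~\ref{lipschitz1} to cancel the $1/\sqrt\ve$ against the dilation, and conclude by Young's inequality with the $L^1$ norm of $|\xi|\hat\varphi(\xi)$.
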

\begin{proof}
By the definition of the commutator, we have
 $$[\mathcal{T}_\ve,\varphi(x)] u=\mathcal{T}_\ve(\varphi(x)u(x))-\varphi(x)\mathcal{T}_\ve u(x).$$
Using Plancherel's theorem, Lemma \ref{lipschitz1} and and Young's inequality, we get that
\begin{equation*}
    \begin{split}
        &\|[\mathcal{T}_\ve,\varphi(x)] u\|_{L^2(\RR^d)}\\
        =&\frac 1{\sqrt{\ve}}\|  h(\sqrt\ve \xi)\hat{\varphi}*\hat{u}-\hat{\varphi}*( h(\sqrt\ve \xi) \hat{u}(\xi) )\|_{L^2(\RR^d)}\\
        =&\frac 1{\sqrt{\ve}}\left\|   h(\sqrt\ve \xi)\int_{\RR^d}\hat{\varphi}(\xi-\zeta) \hat{u}(\zeta)\ud \zeta-\int_{\RR^d}\hat{\varphi}(\xi-\zeta)  h(\sqrt\ve \zeta) \hat{u}(\zeta) \ud \zeta\right\|_{L^2(\RR^d)}\\
        =&\frac 1{\sqrt{\ve}}\left\| \int_{\RR^d}\hat{\varphi}(\xi-\zeta)(  h(\sqrt\ve \xi)-  h(\sqrt\ve \zeta)) \hat{u}(\zeta) \ud \zeta\right\|_{L^2(\RR^d)}\\
        \leq &\frac C{\sqrt{\ve}}\left\| \int_{\RR^d}\hat{\varphi}(\xi-\zeta)\sqrt{\ve} |\xi-\zeta|  \hat{u}(\zeta) \ud \zeta\right\|_{L^2(\RR^d)}\\
        = &C\left\|  (|\xi|\hat{\varphi}(\xi))* \hat{u} \right\|_{L^2(\RR^d)}\\
        \leq & C\||\xi|\hat{\varphi}(\xi)\|_{L^1(\RR^d)}\|\hat{u}\|_{L^2(\RR^d)}.
    \end{split}
  \end{equation*}
\end{proof}
\begin{Lemma}\label{variation5}
Up to the extraction of a subsequence, we have
\begin{equation*}
  \mathcal{T}_\ve Q[f_\ve]\xrightarrow {\ve\to 0} -i\sqrt{\tfrac \mu {2d}} \nabla Q[f_0],~\text{ weakly-{star} in }L^\infty(0,T;L^2(\RR^d)),
\end{equation*}
where $f_0$ is the limit of $f_\ve$ in Proposition \ref{prop:compact2}. Moreover, for any $\varphi(x)\in C_c^1(\RR^d)$, we have
\begin{equation}\label{eq:1.23}
  \mathcal{T}_\ve (\varphi Q[f_\ve])\xrightarrow {\ve\to 0} -i\sqrt{\tfrac \mu {2d}} \nabla (\varphi Q[f_0]),~\text{ weakly-{star} in }L^\infty(0,T;L^2(\RR^d)).
\end{equation}
\end{Lemma}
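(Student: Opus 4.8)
The plan is to first extract from the dissipation law \eqref{eq:ME} a bound on $\mathcal{T}_\ve Q[f_\ve]$ that is uniform in $\ve$, so that a weak-$\ast$ limit exists along a subsequence, and then to identify that limit by pairing against smooth compactly supported test matrices, exploiting the strong convergence $Q[f_\ve]\to Q[f_0]$ from \eqref{limit:Qf} together with the symbol-level convergence $\mathcal{T}_\ve\to-i\sqrt{\tfrac{\mu}{2d}}\,\nabla$ of Lemma \ref{converge-T}. Throughout I write $\tilde Q[f_\ve]:=Q[f_\ve]-Q[f_{e_0}]$; since the Fourier multiplier of $\mathcal{T}_\ve$ vanishes at the origin it annihilates the $x$-independent matrix $Q[f_{e_0}]$, so $\mathcal{T}_\ve Q[f_\ve]=\mathcal{T}_\ve\tilde Q[f_\ve]$.

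\emph{Step 1 (uniform bound).} Using \eqref{squrtroot}, \eqref{eq:1.68}, and the identity recorded in the proof of Theorem \ref{thm:wellpose}, one has
\[
\big\|\mathcal{T}_\ve Q[f_\ve](\cdot,t)\big\|_{L^2(\RR^d)}^2=\int_{\RR^d}\tilde Q[f_\ve]:\mathcal{L}_\ve\tilde Q[f_\ve]\,\ud x=\frac1{2\ve}\int_{\RR^d\times\RR^d}\big|Q[f_\ve](x,t)-Q[f_\ve](y,t)\big|^2k_\ve(x-y)\,\ud x\,\ud y .
\]
By \eqref{eq:ME}, the fact that $\mathcal{E}_0[f_\ve]\ge E_0=\mathcal{E}_0[f_{e_0}]$ (Lemma \ref{lem:minimizer}), and the bound $\mathcal{E}_\ve[f_\ve^{in}]\le C\ve$ from \eqref{assump:uniform}, the right-hand side is bounded by a constant independent of $\ve$ for a.e.\ $t\in(0,T)$. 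Hence $\|\mathcal{T}_\ve Q[f_\ve]\|_{L^\infty(0,T;L^2(\RR^d))}\le C$, and along a subsequence $\mathcal{T}_\ve Q[f_\ve]\rightharpoonup G$ weakly-$\ast$ in $L^\infty(0,T;L^2(\RR^d))$.

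\emph{Step 2 (identification of $G$).} Since the Fourier symbol of the $j$-th component $\mathcal{T}^j_\ve$ is real and odd, one has $\int_{\RR^d}(\mathcal{T}^j_\ve u)\,v\,\ud x=-\int_{\RR^d}u\,(\mathcal{T}^j_\ve v)\,\ud x$ for real-valued $u,v\in L^2(\RR^d)$ (the nonlocal analogue of integration by parts for $\partial_j$, which is what produces the factor $-i$ in the limit). I would fix $\Phi\in C_c^\infty\big((0,T)\times\RR^d;\RR^{3\times3}\big)$ and $1\le j\le d$ and write, using $\mathcal{T}_\ve Q[f_\ve]=\mathcal{T}_\ve\tilde Q[f_\ve]$,
\[
\int_0^T\!\!\int_{\RR^d}\mathcal{T}^j_\ve Q[f_\ve]:\Phi\,\ud x\,\ud t=-\int_0^T\!\!\int_{\RR^d}\tilde Q[f_\ve]:\mathcal{T}^j_\ve\Phi\,\ud x\,\ud t .
\]
Since $\Phi(\cdot,t)\in H^1(\RR^d)$ uniformly in $t\in(0,T)$, Lemma \ref{converge-T} and dominated convergence give $\mathcal{T}^j_\ve\Phi\to-i\sqrt{\tfrac{\mu}{2d}}\,\partial_j\Phi$ strongly in $L^1\big(0,T;L^2(\RR^d)\big)$, while $\tilde Q[f_\ve]\rightharpoonup Q[f_0]-Q[f_{e_0}]$ weakly-$\ast$ in $L^\infty(0,T;L^2(\RR^d))$ by \eqref{eq:1.21} and \eqref{limit:Qf}. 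Passing to the limit (weak-$\ast$ paired with strong), then using $\int_{\RR^d}\partial_j\Phi\,\ud x=0$ to discard the constant matrix $Q[f_{e_0}]$, and finally integrating by parts (legitimate since $Q[f_0]=\Psi\in L^\infty(0,T;H^1_{loc}(\RR^d))$ by Proposition \ref{compactness1}), I expect the limit to equal $-i\sqrt{\tfrac{\mu}{2d}}\int_0^T\!\int_{\RR^d}\partial_j Q[f_0]:\Phi\,\ud x\,\ud t$. As $\Phi$ and $j$ are arbitrary this forces $G=-i\sqrt{\tfrac{\mu}{2d}}\,\nabla Q[f_0]$; the limit being uniquely determined, the whole (sub)sequence then converges.

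\emph{Step 3 (the localized statement \eqref{eq:1.23}).} I would decompose $\mathcal{T}_\ve(\varphi Q[f_\ve])=\varphi\,\mathcal{T}_\ve Q[f_\ve]+[\mathcal{T}_\ve,\varphi]Q[f_\ve]$; the first term is bounded in $L^\infty(0,T;L^2)$ by Step 1, and for the commutator I write $Q[f_\ve]=\tilde Q[f_\ve]+Q[f_{e_0}]$, so that $[\mathcal{T}_\ve,\varphi]Q[f_\ve]=[\mathcal{T}_\ve,\varphi]\tilde Q[f_\ve]+Q[f_{e_0}]\,\mathcal{T}_\ve\varphi$, the first bounded in $L^\infty(0,T;L^2)$ by \eqref{convolu1} and \eqref{eq:1.21}, the second bounded uniformly since $\mathcal{T}_\ve\varphi\to-i\sqrt{\tfrac{\mu}{2d}}\,\nabla\varphi$ in $L^2$ (Lemma \ref{converge-T}). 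Thus $\mathcal{T}_\ve(\varphi Q[f_\ve])$ is bounded in $L^\infty(0,T;L^2)$, and repeating Step 2 with $\mathcal{T}^j_\ve\Phi$ replaced by the now \emph{compactly supported} function $\varphi\,\mathcal{T}^j_\ve\Phi\to-i\sqrt{\tfrac{\mu}{2d}}\,\varphi\,\partial_j\Phi$ in $L^2\big((0,T)\times\RR^d\big)$ — which may be paired directly with $Q[f_\ve]\to Q[f_0]$ strongly in $L^2_{loc}$ from \eqref{limit:Qf} — identifies the weak-$\ast$ limit as $-i\sqrt{\tfrac{\mu}{2d}}\,\partial_j(\varphi Q[f_0])$, giving \eqref{eq:1.23}. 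The main obstacle I anticipate is the bookkeeping at spatial infinity: $Q[f_\ve]$ lies only in $L^2_{loc}$ while $\mathcal{T}^j_\ve\Phi$ is not compactly supported, and this is handled in Step 2 precisely by isolating the constant part $Q[f_{e_0}]$ and using that the symbol of $\mathcal{T}_\ve$ vanishes at $\xi=0$.
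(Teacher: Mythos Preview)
Your proposal is correct and follows essentially the same route as the paper's proof: a uniform $L^\infty(0,T;L^2)$ bound on $\mathcal{T}_\ve Q[f_\ve]$ from the energy dissipation, extraction of a weak-$\ast$ limit, identification of that limit by moving $\mathcal{T}_\ve$ onto a smooth test matrix and invoking Lemma~\ref{converge-T} together with the strong convergence \eqref{limit:Qf}, and for \eqref{eq:1.23} the same commutator decomposition $[\mathcal{T}_\ve,\varphi]Q[f_\ve]=[\mathcal{T}_\ve,\varphi]\tilde Q[f_\ve]+Q[f_{e_0}]\,\mathcal{T}_\ve\varphi$ combined with \eqref{convolu1} and \eqref{eq:1.21}. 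Your explicit use of $\tilde Q[f_\ve]=Q[f_\ve]-Q[f_{e_0}]\in L^\infty_tL^2_x$ in Step~2 is in fact a bit more careful than the paper, which writes the pairing directly with $Q[f_\ve]$ and leaves the integrability at spatial infinity implicit.
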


\begin{proof}
The uniform bound \eqref{push1} and the definition of $\mathcal{T}_\ve$ at \eqref{defteps} imply
\begin{equation}\label{eq:1.24}
\|\mathcal{T}_\ve Q[f_\ve ]\|_{L^\infty(0,T;L^2(\RR^d))} \leq C.
\end{equation}
Then there exists $\widetilde{Q}\in L^\infty(0,T;L^2(\RR^d))$ such that
\begin{equation*}
  \mathcal{T}_\ve Q[f_\ve ]\xrightarrow {\ve\to 0} \widetilde{Q}\quad\text{ weakly-{star} in } L^\infty(0,T;L^2(\RR^d)),
\end{equation*}
or equivalently, for every $\Phi\in C^\infty_c(\RR^d\times (0,T);\RR^{d}\times \RR^{ 3\times 3})$,
\begin{equation*}
\int_0^T\int_{\RR^d}\mathcal{T}_\ve Q[f_\ve]:\Phi(x,t)\ud x\ud t\xrightarrow{\ve\to 0}\int_0^T\int_{\RR^d}\widetilde{Q}(x,t):\Phi(x,t)\ud x\ud t.
\end{equation*}
On the other hand, the strong convergence of $Q[f_\ve ](x)$ stated in \eqref{20150626claim1} and Lemma \ref{converge-T} imply
\begin{align*}
\int_0^T\int_{\RR^d}\mathcal{T}_\ve Q[f_\ve]:\Phi(x,t)\ud x\ud t
&=-\int_0^T\int_{\RR^d}Q[f_\ve]: (\mathcal{T}_\ve\cdot\Phi(x,t))\ud x\ud t\\
&\xrightarrow {\ve\to 0}i\sqrt{\tfrac\mu {{2d}}}\int_0^T\int_{\RR^d}Q[f_0]:(\nabla\cdot \Phi(x,t))\ud x\ud t.
\end{align*}
The above two formulas together imply $\widetilde{Q}(x,t)=-i\sqrt{\tfrac \mu {{2d}}}\nabla Q[f_0]$.
Using the same method, we can show \eqref{eq:1.23} provided that
 \begin{equation}\label{eq:1.25}
   \|\mathcal{T}_\ve (\varphi Q[f_\ve])\|_{L^\infty(0,T;L^2(\RR^d))}\leq C,
 \end{equation}
 for some $C$ independent of $\ve$. Note that \eqref{eq:1.23} is not a straightforward consequence of \eqref{eq:1.24} as $\mathcal{T}_\ve$ is a non-local operator. To proceed, we write
\[\mathcal{T}_\ve(\varphi Q[f_\ve])=\varphi \mathcal{T}_\ve Q[f_\ve]+[\mathcal{T}_\ve,\varphi] Q[f_{e_0}]+[\mathcal{T}_\ve,\varphi](Q[f_\ve]-Q[f_{e_0}]).\]
The first two terms can be estimated by using (\ref{eq:1.24}) and the fact $[\mathcal{T}_\ve,\varphi] Q[f_{e_0}]=Q[f_{e_0}]\mathcal{T}_\ve\varphi$. For the last term,
we have from \eqref{convolu1} and \eqref{eq:1.21} that:
\[\|[\mathcal{T}_\ve,\varphi](Q[f_\ve]-Q[f_{e_0}])\|_{L^\infty(0,T;L^2(\RR^d))}\leq C\| (Q[f_\ve]-Q[f_{e_0}])\|_{L^\infty(0,T;L^2(\RR^d))}\leq C.\]
This implies \eqref{eq:1.25} and thus  \eqref{eq:1.23}.
\end{proof}

\section{Strong compactness via the dissipation control}
In this section and hereafter, we denote $f_0=f_0(m,x,t)$  the limiting equilibrium distribution function obtained in Proposition \ref{prop:compact2}, i. e.,
\begin{equation*}
   {f}_0(m,x,t)=h_{n(x,t)}(m).
\end{equation*}

\subsection{The linearized operator}
The linearized operator of $\CR\cdot\big(\CR f+f\CR \CU_0[f]\big)$ around $f_0$ is given by
 \begin{equation*}
 \CG_{f_0}g:=\CR\cdot(\CR g+g\CR\CU_0[f_0]+f_0\CR\CU_0[g]).
 \end{equation*}
Since $f_0$ is a critical point of  the Maier-Saupe bulk energy \eqref{maiersaupe}, we have
  \begin{equation*}
    \log f_0+\CU_0[f_0]= \text{const},
  \end{equation*}
  and thus,
   \begin{equation}
    \CR f_0+f_0\CR\CU_0[f_0]=0.\label{eq:f0}
  \end{equation}
A straightforward computation leads to
\begin{equation}\label{eq:1.63}
    \CG_{f_0}g=-\CA_{f_0}\CH_{f_0}g,
  \end{equation}
where $\CA_{f_0}$ and $\CH_{f_0}$ are self-adjoint operators defined by
\begin{equation}\label{eq:1.72}
    \CA_{f_0}\phi=-\CR\cdot(f_0\CR\phi),\quad \CH_{f_0}g=\frac g{f_0}+\CU_0[g].
\end{equation}
In a similar manner, if we define
\begin{equation*}
  \CH_{f_0}^\epsilon h:=\frac h{f_0}+\CU_\epsilon[h],
\end{equation*}
then
\begin{equation*}
  \CG^\ve_{f_0}g:=\CR\cdot(\CR g+g\CR\CU_0[f_0]+f_0\CR\CU_\ve[g])= -\CA_{f_0}\CH^\ve_{f_0}g.
  \end{equation*}
Recall that the kernel space of  $\mathcal{G}_{f_0}$ has been completely characterized in \cite[Theorem 4.6]{ WZZ-cpam}:
\begin{equation}\label{eq:1.64}
  \ker\CG_{f_0}=\ker \CH_{f_0}=\big\{\Theta\cdot\CR f_0:~\Theta\in\RR^3\big\}.
\end{equation}
For any $g\in \mathcal{P}_0(\BS):=\big\{g\in L^2(\BS): \int_{\BS}g(m)\ud m=0\big\}$, we use the following decomposition:
\ben
g=g^\top+g^\bot\in \ker \mathcal{G}_{f_0}\oplus_{f^{-1}_0} (\ker \mathcal{G}_{f_0})^\bot,\quad (\text{i. e. }\int_\BS \tfrac{g^\bot g^\top} {f_0} \ud m=0)\label{decom}
\een
where due to formula \eqref{eq:Rf},
\begin{align}\label{kerout}\nonumber
   (\ker \mathcal{G}_{f_0})^\bot=&~\left\{h\in \mathcal{P}_0(\BS): \int_\BS \tfrac{h \tilde h} {f_0} \ud m=0, ~ \forall \tilde h\in \ker \mathcal{G}_{f_0}\right\}\\
   =&~\left\{h\in \mathcal{P}_0(\BS): \int_\BS  (m\cdot n) \big[(m\wedge n)\cdot\Theta\big] h\ud m=0,~ \forall \Theta\in\mathbb{R}^3\right\}.
\end{align}
In addition, we have the following estimates.
\begin{Lemma}\label{lem:lowerbound}
There exist some $\ve$-independent constants $C_1, C_2>0$ such that
\begin{equation*}
  \begin{split}
     & C_1 \|g^\bot\|^2_{L^2(\BS)}+ \alpha Q[g]:(Q[g]-k_\ve* Q[g]) \leq  \langle \CH^\ve_{f_0} g,g\rangle,\\
&\|g^\top\|^2_{L^2(\BS)}\leq C_2|Q[g]|^2,
  \end{split}
\end{equation*}
{where $\langle \cdot,\cdot\rangle$ denotes the standard inner product in $L^2(\BS)$.}
\end{Lemma}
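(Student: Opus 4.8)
The plan is to handle the two inequalities separately; the point for the first one is that all $\ve$-dependence can be stripped off by an exact algebraic identity, leaving a purely spectral statement about the spatially homogeneous operator $\CH_{f_0}$. Everything below is understood pointwise in $(x,t)$, with $g=g(\cdot,x,t)\in\mathcal{P}_0(\BS)$, $f_0=h_{n(x,t)}$, and $g=g^\top+g^\bot$ the splitting in \eqref{decom}. Using $\int_\BS g\,\ud m=0$, \eqref{eq:UQ0}, \eqref{eq:UQ} and expanding $|m\wedge m'|^2=1-(m\cdot m')^2$, one obtains $\int_\BS\CU_0[g]\,g\,\ud m=-\alpha|Q[g]|^2$ and $\int_\BS\CU_\ve[g]\,g\,\ud m=-\alpha\,(k_\ve*Q[g]):Q[g]$ (convolution in $x$), hence
\begin{equation*}
\langle\CH^\ve_{f_0}g,g\rangle-\alpha\,Q[g]:\bigl(Q[g]-k_\ve*Q[g]\bigr)\;=\;\int_\BS\frac{g^2}{f_0}\,\ud m-\alpha|Q[g]|^2\;=\;\langle\CH_{f_0}g,g\rangle.
\end{equation*}
So the first inequality is equivalent to the $\ve$-free, homogeneous coercivity bound $\langle\CH_{f_0}g,g\rangle\ge C_1\|g^\bot\|_{L^2(\BS)}^2$.

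To prove this, note that $\CH_{f_0}$ is self-adjoint on $L^2(\BS)$ (the kernel $|m\wedge m'|^2$ defining $\CU_0$ is symmetric) and $g^\top\in\ker\CH_{f_0}$ by \eqref{eq:1.64}, so the cross terms vanish and $\langle\CH_{f_0}g,g\rangle=\langle\CH_{f_0}g^\bot,g^\bot\rangle$. Since $h_n$ is, for every $n$, the global minimizer of the homogeneous Maier--Saupe energy (Lemma \ref{lem:minimizer}) and a stable critical point (Proposition \ref{prop:critical-stab}), the quadratic form $\mathcal{Q}(g):=\langle\CH_{f_0}g,g\rangle=\int_\BS g^2/f_0\,\ud m-\alpha|Q[g]|^2$ is nonnegative on $\mathcal{P}_0(\BS)$; by Cauchy--Schwarz for a nonnegative form, $\mathcal{Q}(g)=0$ forces $\CH_{f_0}g$ to be constant, whence $\CG_{f_0}g=-\CA_{f_0}\CH_{f_0}g=0$ and, by \eqref{eq:1.63}--\eqref{eq:1.64}, the null space of $\mathcal{Q}|_{\mathcal{P}_0(\BS)}$ is exactly $\ker\CH_{f_0}$. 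I would then get the spectral gap by a compactness argument: if the bound failed there would exist $h_k\in\mathcal{P}_0(\BS)$, orthogonal (in the $f_0^{-1}$ pairing of \eqref{decom}) to $\ker\CH_{f_0}$, with $\|h_k\|_{L^2(\BS)}=1$ and $\mathcal{Q}(h_k)\to0$; extracting $h_k\rightharpoonup h$ in $L^2(\BS)$ one has $Q[h_k]\to Q[h]$ and $\liminf_k\int_\BS h_k^2/f_0\,\ud m\ge\int_\BS h^2/f_0\,\ud m$, so $0\ge\mathcal{Q}(h)\ge0$, giving $h\in\ker\CH_{f_0}$ and therefore $h=0$ by orthogonality; but then $\int_\BS h_k^2/f_0\,\ud m=\mathcal{Q}(h_k)+\alpha|Q[h_k]|^2\to0$, contradicting $\|h_k\|_{L^2(\BS)}^2\le\|f_0\|_{L^\infty(\BS)}\int_\BS h_k^2/f_0\,\ud m$. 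All objects here ($\CH_{f_0}$, its kernel, these norms) transform equivariantly under rotations of $n\in\BS$, so the resulting $C_1$ depends only on $\alpha$ — not on $(x,t)$ nor on $\ve$.

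For the second inequality, $g^\top$ lies in $K:=\{\Theta\cdot\CR f_0:\Theta\in\RR^3\}$, which by \eqref{eq:Rf} equals $\{2\eta(m\cdot n)\bigl((m\wedge n)\cdot\Theta\bigr)f_0:\Theta\perp n\}$ and is two-dimensional. A direct second-moment computation (integrating by parts via \eqref{identity:R-1}, \eqref{identity:R-2} and using $Q[f_0]=S_2(n\otimes n-\tfrac13 I_3)$ with $S_2\ne0$, Lemma \ref{uniaxial1}; cf.\ \cite{WZZ-cpam}) shows that $K\ni h\mapsto Q[h]$ is injective with image $V:=\{n\otimes e+e\otimes n:e\perp n\}$, so by finite-dimensionality and rotational equivariance $\|h\|_{L^2(\BS)}\le C|Q[h]|$ on $K$ with $C$ independent of $n$. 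It remains to compare $|Q[g^\top]|$ with $|Q[g]|$: the characterization \eqref{kerout} of $(\ker\CG_{f_0})^\bot$ gives $\int_\BS(m\cdot n)(m\wedge n)\,g^\bot\,\ud m=0$, which (using $\int g^\bot=0$) unwinds to $(Q[g^\bot]n)\times n=0$, i.e.\ $n$ is an eigenvector of $Q[g^\bot]$; every $M\in V$ then satisfies $M:Q[g^\bot]=2\,e\cdot(Q[g^\bot]n)=0$, so $Q[g]=Q[g^\top]+Q[g^\bot]$ is an orthogonal decomposition in matrix space. Hence $|Q[g^\top]|\le|Q[g]|$ and $\|g^\top\|_{L^2(\BS)}^2\le C_2|Q[g]|^2$.

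The only genuinely nontrivial step is the homogeneous spectral gap in the second paragraph: identifying the null space of the nonnegative form $\mathcal{Q}$ with $\ker\CH_{f_0}$ — where the cited kernel description \eqref{eq:1.64} is used in an essential way — and running the compactness argument so that $C_1$ stays uniform over the orientation $n$ (equivalently, invoking rotational equivariance to reduce to a single fixed $n$). The algebraic identity in the first paragraph and the finite-dimensional linear algebra in the third are routine.
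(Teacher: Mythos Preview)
Your proof is correct and follows the same overall strategy as the paper: for the first inequality, both isolate the exact identity $\langle\CH^\ve_{f_0}g,g\rangle=\langle\CH_{f_0}g,g\rangle+\alpha\,Q[g]:(Q[g]-k_\ve*Q[g])$ to reduce to the $\ve$-free homogeneous coercivity $\langle\CH_{f_0}g,g\rangle\ge C_1\|g^\bot\|^2_{L^2(\BS)}$; for the second, both exploit the explicit description of $\ker\CG_{f_0}$ and the vanishing of the ``off-diagonal'' components of $Q[g^\bot]$ coming from \eqref{kerout}.

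The differences are presentational rather than structural. The paper simply cites \cite[Proposition~4.5]{WZZ-cpam} for the homogeneous coercivity, whereas you supply a self-contained compactness/spectral-gap argument (nonnegativity from the minimizing property, null space identified via \eqref{eq:1.64}, contradiction by weak convergence); your argument is sound and the uniformity in $n$ via rotational equivariance is exactly the right observation. For the second inequality the paper fixes $n=(0,0,1)^T$, writes $\ker\CG_{f_0}=\operatorname{span}\{m_1m_3f_0,m_2m_3f_0\}$, reads off $Q_{13}[g^\bot]=Q_{23}[g^\bot]=0$ from \eqref{kerout}, and concludes $\|g^\top\|^2\le C(|Q_{13}[g]|^2+|Q_{23}[g]|^2)\le C|Q[g]|^2$; your coordinate-free version (injectivity of $Q$ on $K$ with image $V=\{n\otimes e+e\otimes n:e\perp n\}$, and orthogonality of $Q[g^\bot]$ to $V$ because $n$ is an eigenvector of $Q[g^\bot]$) is the same computation in invariant language. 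Either route is fine; the paper's is shorter, yours is more transparent about why no constant depends on $n$ or $\ve$.
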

 \begin{proof}
  Note that $Q[g]:(Q[g]-k_\ve*Q[g])$ may not be positive pointwisely.
 It follows from  \cite[Proposition 4.5]{WZZ-cpam} that
 $$ \langle \CH_{f_0} g,g\rangle\geq C_1\|g^\perp\|^2_{L^2(\BS)}.$$
This together with \eqref{relation:U} and \eqref{def:L} implies
\begin{equation*}
   \langle \CH^\ve_{f_0} g,g\rangle =  \langle \CH_{f_0} g,g\rangle+\langle (\CU_\ve-\CU_0)[g], g\rangle
 \ge  C_1 \|g^\bot\|^2_{L^2(\BS)}+\alpha Q[g]:(Q[g]-k_\ve*Q[g]),
\end{equation*}
which gives the first inequality.
To prove the second one, we can assume $n=(0,0,1)^T$ without loss of generality. In this case, we have $f_0=\frac {e^{\eta m_3^2}}Z$ and
$$\ker \mathcal{G}_{f_0}=\operatorname{span}\big\{m_1m_3f_0, m_2m_3f_0\big\},$$
From \eqref{kerout}, we have $Q_{13}[g^\bot]=Q_{23}[g^\bot]=0$. Thus
\begin{equation*}
  {\|g^\top\|^2_{L^2(\BS)}}\le C(|Q_{13}[g^\top]|^2+|Q_{23}[g^\top]|^2)=C(|Q_{13}[g]|^2+|Q_{23}[g]|^2)\le C|Q[g]|^2.
\end{equation*}
The proof is completed.
 \end{proof}
The following lemma, proved in \cite{KD, WZZ-cpam}, gives a characterization of the kernel space of the adjoint operator $\mathcal{G}^*_{f_0}$.
\begin{Lemma}\label{lem:kerG}
The limiting equilibrium distribution $f_0$ (obtained in Proposition \ref{prop:compact2}) fulfills
\begin{equation*}
    \ker\mathcal{G}^*_{f_0}= \operatorname{span}\big\{\mathcal{A}_{f_0}^{-1}\CR_if_0 \big\}_{1\leq i\leq 3}.
\end{equation*}
That is, a function $\psi(m)\in \ker\mathcal{G}^*_{f_0}$ if and only if there exists $\Theta\in \mathbb{R}^3$ such that
\begin{equation*}
-\CR\cdot (f_0\CR \psi)=\Theta\cdot\CR f_0.
\end{equation*}
\end{Lemma}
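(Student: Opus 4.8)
The plan is to exploit the factorization $\mathcal{G}_{f_0}=-\mathcal{A}_{f_0}\mathcal{H}_{f_0}$ together with the self-adjointness of $\mathcal{A}_{f_0}$ and $\mathcal{H}_{f_0}$ on $L^2(\BS)$, working throughout on the mean-zero space $\mathcal{P}_0(\BS)$ (the natural space, since $\int_\BS f_\ve\,\ud m\equiv 1$ forces all admissible perturbations to have zero average, so $\mathcal{G}_{f_0}$ maps $\mathcal{P}_0(\BS)$ into itself). First I would record that $\mathcal{A}_{f_0}$ is self-adjoint by integration by parts on the sphere, $\langle\mathcal{A}_{f_0}\phi,\psi\rangle=\int_\BS f_0\,\CR\phi\cdot\CR\psi\,\ud m$ via \eqref{identity:R-1}, and that $\mathcal{H}_{f_0}$ is self-adjoint because both $m\mapsto 1/f_0(m)$ and the kernel $\alpha|m\wedge m'|^2$ defining $\CU_0$ are symmetric in their arguments. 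Consequently, pairing against a (smooth) $g$ and transposing the two self-adjoint factors gives $\langle\mathcal{G}_{f_0}g,\psi\rangle=-\langle\mathcal{A}_{f_0}\mathcal{H}_{f_0}g,\psi\rangle=-\langle\mathcal{H}_{f_0}g,\mathcal{A}_{f_0}\psi\rangle=-\langle g,\mathcal{H}_{f_0}\mathcal{A}_{f_0}\psi\rangle$, i.e. $\mathcal{G}^*_{f_0}=-\mathcal{H}_{f_0}\mathcal{A}_{f_0}$ on $\mathcal{P}_0(\BS)$. Since $\mathcal{A}_{f_0}\psi=-\CR\cdot(f_0\CR\psi)$ automatically has vanishing mean (again by \eqref{identity:R-1}), a $\psi\in\mathcal{P}_0(\BS)$ lies in $\ker\mathcal{G}^*_{f_0}$ if and only if $\mathcal{H}_{f_0}\mathcal{A}_{f_0}\psi\perp\mathcal{P}_0(\BS)$, that is, $\mathcal{H}_{f_0}\mathcal{A}_{f_0}\psi$ is constant on $\BS$.

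The crux is then to upgrade ``$\mathcal{H}_{f_0}\phi$ constant'' to ``$\mathcal{H}_{f_0}\phi=0$'' for $\phi:=\mathcal{A}_{f_0}\psi\in\mathcal{P}_0(\BS)$. Here I would invoke the coercivity $\langle\mathcal{H}_{f_0}\phi,\phi\rangle\geq C_1\|\phi^\bot\|_{L^2(\BS)}^2$ for $\phi\in\mathcal{P}_0(\BS)$ (the $\ve=0$ case appearing in the proof of Lemma \ref{lem:lowerbound}, i.e. \cite[Proposition 4.5]{WZZ-cpam}) together with the kernel identification \eqref{eq:1.64}. Indeed, if $\mathcal{H}_{f_0}\phi\equiv c$, then testing against $\phi$ and using $\int_\BS\phi\,\ud m=0$ gives $\langle\mathcal{H}_{f_0}\phi,\phi\rangle=c\int_\BS\phi\,\ud m=0$; coercivity forces $\phi^\bot=0$, hence $\phi=\phi^\top\in\ker\mathcal{G}_{f_0}=\ker\mathcal{H}_{f_0}$ by \eqref{eq:1.64}, and therefore $\mathcal{H}_{f_0}\phi=0$. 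Thus $\psi\in\ker\mathcal{G}^*_{f_0}$ if and only if $\mathcal{A}_{f_0}\psi\in\ker\mathcal{H}_{f_0}=\{\Theta\cdot\CR f_0:\Theta\in\RR^3\}$, which is exactly the equation $-\CR\cdot(f_0\CR\psi)=\Theta\cdot\CR f_0$ in the statement.

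It remains to convert this into the span description. I would note that $\mathcal{A}_{f_0}$ is a second-order elliptic operator on $\BS$ with smooth, strictly positive coefficient $f_0$; its $L^2(\BS)$-kernel consists exactly of the constants (from $\langle\mathcal{A}_{f_0}\phi,\phi\rangle=\int_\BS f_0|\CR\phi|^2\,\ud m=0\Rightarrow\CR\phi=0$), so by the Fredholm alternative it restricts to a bijection of $\mathcal{P}_0(\BS)\cap H^2(\BS)$ onto $\mathcal{P}_0(\BS)$, with smooth inverse by elliptic regularity. Since each $\CR_i f_0$ has zero mean, $\mathcal{A}_{f_0}^{-1}\CR_i f_0$ is well defined in $\mathcal{P}_0(\BS)$, and for $\psi\in\mathcal{P}_0(\BS)$ the relation $\mathcal{A}_{f_0}\psi=\Theta_i\CR_i f_0$ is equivalent to $\psi=\Theta_i\,\mathcal{A}_{f_0}^{-1}\CR_i f_0$. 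Combining with the previous paragraph yields $\ker\mathcal{G}^*_{f_0}=\operatorname{span}\{\mathcal{A}_{f_0}^{-1}\CR_i f_0\}_{1\leq i\leq 3}$. (As a consistency check this is genuinely two-dimensional: by \eqref{eq:Rf} one has $n\cdot\CR f_0=0$, so $n_i\mathcal{A}_{f_0}^{-1}\CR_i f_0=\mathcal{A}_{f_0}^{-1}(n\cdot\CR f_0)=0$, matching $\dim\ker\mathcal{H}_{f_0}=2$.)

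I expect the main obstacle to be precisely the step ``$\mathcal{H}_{f_0}\phi$ constant $\Rightarrow\mathcal{H}_{f_0}\phi=0$''. It fails on the full space $L^2(\BS)$: because $\CR_i f_0$ has zero mean, every constant lies in the range of $\mathcal{H}_{f_0}$, producing an extra kernel direction of $\mathcal{G}_{f_0}$ (and of $\mathcal{G}_{f_0}^*$) outside $\mathcal{P}_0(\BS)$. The argument closes only because we restrict to mean-zero perturbations and because of the quantitative coercivity of $\mathcal{H}_{f_0}$ on $\mathcal{P}_0(\BS)$ and the sharp kernel identity \eqref{eq:1.64}. Apart from that, the only care needed is the routine bookkeeping of domains for the unbounded self-adjoint operator $\mathcal{A}_{f_0}$ when justifying $\mathcal{G}^*_{f_0}=-\mathcal{H}_{f_0}\mathcal{A}_{f_0}$, which is standard since $\mathcal{G}_{f_0}$ and $-\mathcal{H}_{f_0}\mathcal{A}_{f_0}$ have the same elliptic principal part $-\Delta_\BS$.
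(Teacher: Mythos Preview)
Your argument is correct and follows the same route as the paper: compute $\mathcal{G}^*_{f_0}=-\mathcal{H}_{f_0}\mathcal{A}_{f_0}$ from the self-adjointness of $\mathcal{A}_{f_0}$ and $\mathcal{H}_{f_0}$, and then use the kernel identification \eqref{eq:1.64} to conclude $\mathcal{A}_{f_0}\psi\in\ker\mathcal{H}_{f_0}=\{\Theta\cdot\CR f_0\}$.

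The only difference is a matter of bookkeeping. The paper works implicitly on $L^2(\BS)$: from $\langle\mathcal{G}^*_{f_0}\psi,\phi\rangle=-\langle\mathcal{H}_{f_0}\mathcal{A}_{f_0}\psi,\phi\rangle$ for all $\phi$ it reads off $\mathcal{G}^*_{f_0}\psi=-\mathcal{H}_{f_0}\mathcal{A}_{f_0}\psi$ pointwise, so $\psi\in\ker\mathcal{G}^*_{f_0}$ is literally $\mathcal{A}_{f_0}\psi\in\ker\mathcal{H}_{f_0}$, and since $\mathcal{A}_{f_0}\psi$ always has zero mean, \eqref{eq:1.64} applies directly. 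You instead restrict to $\mathcal{P}_0(\BS)$ from the outset, which forces you to deal with the possibility $\mathcal{H}_{f_0}\mathcal{A}_{f_0}\psi=\text{const}$; your coercivity argument disposes of that cleanly and is a nice clarification of a point the paper leaves implicit. Your additional remarks on the invertibility of $\mathcal{A}_{f_0}$ on $\mathcal{P}_0(\BS)$ and on the actual dimension of the span are also correct and make the passage to the span description more explicit than in the paper, which stops at the equation $\mathcal{A}_{f_0}\psi=\Theta\cdot\CR f_0$.
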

\begin{proof}
{We use  $\langle \cdot,\cdot\rangle$ to denote the standard inner product in $L^2(\BS)$.}
It follows from \eqref{eq:1.63} that
\begin{align}
 \langle\mathcal{G}^*_{f_0}\psi, \phi\rangle=  \langle\psi,\mathcal{G}_{f_0} \phi\rangle=  \langle\psi,-\mathcal{A}_{f_0}\CH_{f_0} \phi\rangle= -\langle\CH_{f_0}\mathcal{A}_{f_0}\psi, \phi\rangle.\nonumber
\end{align}
Thus, $\psi\in \ker\mathcal{G}^*_{f_0}$ if and only if $\CA_{f_0}\psi\in \ker\CH_{f_0}$ and according to \eqref{eq:1.64}, it is equivalent to $\CA_{f_0}\psi=\Theta\cdot\CR f_0$ for some $\Theta\in\mathbb{R}^3$.
Apparently, $\psi$ is smooth with respect to the variable $m\in \BS$.
\end{proof}

Let us denote
\begin{equation}
h_\ve:=\sqrt{f_\ve}\CR(\log f_\ve+\CU_\ve[f_\ve])=\sqrt{f_\ve}\(\frac{\CR f_\ve}{f_\ve}+ \CR \CU_\ve [f_\ve]\).\nonumber
\end{equation}
It is easy to see that
\begin{equation}\label{eq:1.26}
  \CR\cdot(\CR f_\ve+f_\ve \CR \CU_\ve [f_\ve])=\CR\cdot (\sqrt{f_\ve}h_\ve).
\end{equation}

\begin{Lemma}\label{lem:G-decom}
The difference $g_\ve:=f_\ve-f_0$ fulfills
\begin{equation}
  \CG_{f_0}^\ve g_\ve
    =\CR\cdot\big(\sqrt{f_\ve}h_\ve-f_0\CR(\CU_\ve-\CU_0)[f_0]-g_\ve\CR(\CU_\ve-\CU_0)[f_0]-g_\ve\CR \CU_\ve[g_\ve]\big).\nonumber
\end{equation}
\end{Lemma}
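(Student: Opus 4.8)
The plan is to treat this as a purely algebraic identity obtained by expanding the Doi--Onsager drift operator $\CR\cdot\bigl(\CR f_\ve+f_\ve\CR\CU_\ve[f_\ve]\bigr)$ about the limiting equilibrium $f_0$: write $f_\ve=f_0+g_\ve$, single out the part that is linear in $g_\ve$ (which is exactly $\CG^\ve_{f_0}g_\ve$), and absorb the remaining zeroth- and second-order pieces into the right-hand side by means of the equilibrium relation \eqref{eq:f0}.

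Concretely, I would start from \eqref{eq:1.26}, which reads $\CR\cdot(\sqrt{f_\ve}h_\ve)=\CR\cdot\bigl(\CR f_\ve+f_\ve\CR\CU_\ve[f_\ve]\bigr)$. Substituting $f_\ve=f_0+g_\ve$ and using that $f\mapsto\CU_\ve[f]$ is \emph{linear}, so that $\CU_\ve[f_\ve]=\CU_\ve[f_0]+\CU_\ve[g_\ve]$, the argument of the outer divergence decomposes as
\[
\underbrace{\CR g_\ve+g_\ve\CR\CU_0[f_0]+f_0\CR\CU_\ve[g_\ve]}_{\text{argument of }\CG^\ve_{f_0}g_\ve}
\;+\;\bigl(\CR f_0+f_0\CR\CU_\ve[f_0]\bigr)
\;+\;\bigl(g_\ve\CR\CU_\ve[f_0]-g_\ve\CR\CU_0[f_0]\bigr)
\;+\;g_\ve\CR\CU_\ve[g_\ve].
\]
For the second bracket, \eqref{eq:f0} gives $\CR f_0=-f_0\CR\CU_0[f_0]$, hence $\CR f_0+f_0\CR\CU_\ve[f_0]=f_0\CR(\CU_\ve-\CU_0)[f_0]$; the third bracket is plainly $g_\ve\CR(\CU_\ve-\CU_0)[f_0]$. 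Applying $\CR\cdot$ to the resulting identity and transposing all terms except $\CG^\ve_{f_0}g_\ve$ to the other side yields exactly the claimed formula.

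This step is bookkeeping with no analytic content, so there is no real obstacle; the only points needing a moment of care are the linearity of $\CU_\ve$ (used to split $\CU_\ve[f_\ve]$), the sign in \eqref{eq:f0} when combining the two $f_0$-terms, and keeping track of the minus sign in front of $g_\ve\CR\CU_0[f_0]$ coming from the definition of $\CG^\ve_{f_0}$ when recombining the cross terms.
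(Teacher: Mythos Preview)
Your proposal is correct and is essentially the same algebraic verification as the paper's proof: both rely only on the linearity of $\CU_\ve$, the substitution $f_\ve=f_0+g_\ve$, and the equilibrium relation \eqref{eq:f0}. The only cosmetic difference is that the paper simplifies the right-hand side and the left-hand side separately until they coincide in the common expression $\CR\cdot(\CR f_\ve+f_0\CR\CU_\ve[g_\ve]+f_\ve\CR\CU_0[f_0])$, whereas you expand $\sqrt{f_\ve}h_\ve$ directly and isolate the linear part; the underlying bookkeeping is identical.
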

\begin{proof}
Using  \eqref{eq:1.26}, the right hand side of the formula can be written as
\begin{equation*}
  \begin{split}
 &\CR\cdot(\sqrt{f_\ve}h_\ve-f_0\CR(\CU_\ve-\CU_0)[f_0]-g_\ve\CR(\CU_\ve-\CU_0)[f_0]-g_\ve\CR \CU_\ve[g_\ve])\\
&=\CR\cdot(\sqrt{f_\ve}h_\ve-f_0\CR \CU_\ve [f_0]+f_\ve\CR\CU_0[f_0]-g_\ve\CR \CU_\ve [f_\ve])\\
&=\CR\cdot(\CR f_\ve+f_\ve \CR \CU_\ve [f_\ve]-f_0\CR \CU_\ve [f_0]+f_\ve\CR\CU_0[f_0]-g_\ve\CR \CU_\ve [f_\ve])\\
&=\CR\cdot(\CR f_\ve+f_0\CR\CU_\ve [f_\ve]-f_0\CR \CU_\ve [f_0]+f_\ve\CR\CU_0[f_0])\\
&=\CR\cdot(\CR f_\ve+f_0\CR\CU_\ve [g_\ve]+f_\ve\CR\CU_0[f_0]).
      \end{split}
\end{equation*}
On the other hand, we can employ \eqref{eq:f0} to obtain
\begin{align*}
\CG_{f_0}^\ve g_\ve&= \CR\cdot(\CR g_\ve+f_0 \CR \CU_\ve [g_\ve]+g_\ve\CR\CU_0[f_0])\\
&=\CR\cdot(\CR f_\ve+f_0\CR \CU_\ve [g_\ve]+f_\ve\CR\CU_0[f_0]),
\end{align*}
which yields the lemma.
\end{proof}

\subsection{Strong compactness of $f_\ve$}

Now we derive the strong compactness of $f_\ve$ via the energy dissipation estimate in \eqref{push1}.
\begin{Proposition}\label{prop:strong}
For every  $T>0$ and every compact set $W\subseteq \RR^d$, modulo the extraction of a subsequence,
\begin{equation}
  f_\ve  \xrightarrow{\ve\to 0} f_0\quad\text{strongly in}~L^2(\BS\times W\times(0,T)\big).\nonumber
\end{equation}
\end{Proposition}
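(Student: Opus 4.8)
The plan is to work with the difference $g_\ve:=f_\ve-f_0$. Since $f_\ve(\cdot,x,t)$ and $f_0(\cdot,x,t)$ both integrate to $1$ over $\BS$, we have $g_\ve(\cdot,x,t)\in\mathcal P_0(\BS)$ for a.e. $(x,t)$; moreover \eqref{eq:1.66}, \eqref{assump:uniform} and the bound $\|f_0-f_{e_0}\|_{L^\infty(0,T;L^2(\RR^d\times\BS))}\le C$ (a consequence of $n-e_0\in L^\infty(0,T;H^1(\RR^d))$ from \eqref{eq:1.27} and the Lipschitz dependence $|h_a-h_b|\le C|a-b|$) give $\|g_\ve\|_{L^\infty(0,T;L^2(\RR^d\times\BS))}\le C$, while \eqref{eq:1.16} and \eqref{assump:uniform} give the weaker $\|g_\ve\|_{L^\infty(\RR^d\times(0,T);L^2(\BS))}\le C\ve^{-1/2}$. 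The first quantitative ingredient is the smallness of the dissipation: inserting $\mathcal E_\ve[f_\ve^{in}]\le C\ve$ from \eqref{assump:uniform} into \eqref{eq:ME} yields
\[
\|h_\ve\|_{L^2(\BS\times\RR^d\times(0,T))}^2=\int_0^T\!\!\int_{\RR^d}\!\!\int_\BS f_\ve\,|\CR\mu_\ve[f_\ve]|^2\,\ud m\,\ud x\,\ud t\le C\ve^2 .
\]
Next I would perform the micro--macro decomposition $g_\ve=g_\ve^\top+g_\ve^\bot$ pointwise in $(x,t)$, relative to $\ker\CG_{f_0}$ and its $f_0^{-1}$-weighted orthogonal complement. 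By the second estimate of Lemma \ref{lem:lowerbound}, $\|g_\ve^\top\|_{L^2(\BS)}^2\le C_2|Q[g_\ve]|^2=C_2|Q[f_\ve]-Q[f_0]|^2$; integrating over $W\times(0,T)$ and using the strong convergence \eqref{limit:Qf}, the macroscopic part vanishes: $\|g_\ve^\top\|_{L^2(\BS\times W\times(0,T))}\to0$. Hence it remains to control $g_\ve^\bot$.

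For that, I would apply the coercivity estimate of Lemma \ref{lem:lowerbound} with $g=g_\ve$ and integrate over the bounded set $W$. Since \eqref{20150626bound1} and the Remark \ref{remark1} estimate applied to $Q[f_0]$ give $\|Q[g_\ve]-k_\ve*Q[g_\ve]\|_{L^2(\RR^d)}\le C\sqrt\ve$, the (not pointwise signed) term $\alpha\int_W Q[g_\ve]:(Q[g_\ve]-k_\ve*Q[g_\ve])$ is bounded below by $-C\sqrt\ve$, so
\[
C_1\int_W\|g_\ve^\bot\|_{L^2(\BS)}^2\,\ud x\ \le\ \int_W\langle\CH^\ve_{f_0}g_\ve,\,g_\ve\rangle\,\ud x+C\sqrt\ve .
\]
Because $g_\ve\in\mathcal P_0(\BS)$ and $\CA_{f_0}$ is self-adjoint and invertible on $\mathcal P_0(\BS)$, one has $\langle\CH^\ve_{f_0}g_\ve,g_\ve\rangle=-\langle\CG^\ve_{f_0}g_\ve,\CA_{f_0}^{-1}g_\ve\rangle$; setting $\chi_\ve:=\CA_{f_0}^{-1}g_\ve$ (so $\|\chi_\ve\|_{H^2(\BS)}\le C\|g_\ve\|_{L^2(\BS)}$ by elliptic regularity, $f_0$ being smooth and uniformly bounded above and below) and using Lemma \ref{lem:G-decom} and \eqref{identity:R-1},
\[
\langle\CH^\ve_{f_0}g_\ve,g_\ve\rangle=\big\langle \sqrt{f_\ve}\,h_\ve-R_\ve,\ \CR\chi_\ve\big\rangle,\ \ R_\ve:=f_0\CR(\CU_\ve-\CU_0)[f_0]+g_\ve\CR(\CU_\ve-\CU_0)[f_0]+g_\ve\CR\CU_\ve[g_\ve].
\]
I would then split $\chi_\ve=\CA_{f_0}^{-1}g_\ve^\top+\CA_{f_0}^{-1}g_\ve^\bot$. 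The $g_\ve^\top$-part is handled by $\|\CR\CA_{f_0}^{-1}g_\ve^\top\|_{L^2(\BS)}\le C\|g_\ve^\top\|_{L^2(\BS)}\le C|Q[f_\ve]-Q[f_0]|$ paired against $\sqrt{f_\ve}h_\ve$ and $R_\ve$, which are bounded in $L^2(\BS\times W\times(0,T))$ (using \eqref{eq:1.71}, $\|f_\ve\|_{L^2(\BS\times W\times(0,T))}\le C$, \eqref{bound-U} and the uniform $L^2$-bounds on $Q$), so it tends to $0$ by \eqref{limit:Qf}. In the $g_\ve^\bot$-part: the $\sqrt{f_\ve}h_\ve$ term is rewritten as $\langle h_\ve,\sqrt{f_\ve}\,\CR\CA_{f_0}^{-1}g_\ve^\bot\rangle$ and bounded by $\|h_\ve\|_{L^2(\BS)}\,(\int_\BS f_\ve|\CR\CA_{f_0}^{-1}g_\ve^\bot|^2)^{1/2}\le C\|h_\ve\|_{L^2(\BS)}\|g_\ve^\bot\|_{L^2(\BS)}(1+\|g_\ve\|_{L^2(\BS)})^{1/2}$ via $H^1(\BS)\hookrightarrow L^4(\BS)$; the term $f_0\CR(\CU_\ve-\CU_0)[f_0]$ by its $O(\sqrt\ve)$ smallness in $L^2$; and the two quadratic pieces $g_\ve\CR(\CU_\ve-\CU_0)[f_0]$ and $g_\ve\CR\CU_\ve[g_\ve]$ by combining the $O(\sqrt\ve)$ smallness of $Q[f_0]-k_\ve*Q[f_0]$, the $L^2_{loc}$ smallness of $Q[g_\ve]$, the uniform bounds on $\CR f_\ve$ and $f_\ve$ in $L^2(\BS\times W\times(0,T))$, and an absorption of a small fraction of $\int_0^T\int_W\|g_\ve^\bot\|_{L^2(\BS)}^2$ back into the left-hand side; the leftover powers of $\ve$ are matched against $\|g_\ve\|_{L^\infty_{x,t}L^2(\BS)}\le C\ve^{-1/2}$ by interpolation. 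Integrating in $t$, one obtains $\int_0^T\int_W\|g_\ve^\bot\|_{L^2(\BS)}^2\to0$, which together with the macroscopic part gives $\|g_\ve\|_{L^2(\BS\times W\times(0,T))}\to0$.

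The main obstacle is precisely the last step, i.e.\ controlling $\int_0^T\int_W\langle\sqrt{f_\ve}h_\ve-R_\ve,\CR\chi_\ve\rangle$. The point is that $f_\ve$ has no $\ve$-uniform lower bound (so $\sqrt{f_\ve}$ can degenerate) and no $\ve$-uniform $L^\infty$ bound (so $\|f_\ve(\cdot,x,t)\|_{L^2(\BS)}$ only grows like $\ve^{-1/2}$); consequently the crude estimates of the error terms, and in particular of the genuinely quadratic ones, come out only borderline, and one has to balance the $O(\ve)$ decay from the dissipation $h_\ve$ and the $O(\sqrt\ve)$ decay from the nonlocal commutator quantities $Q[f_0]-k_\ve*Q[f_0]$ and $Q[g_\ve]-k_\ve*Q[g_\ve]$ against these growths, using the micro--macro splitting and absorption to close the argument.
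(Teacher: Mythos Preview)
Your overall architecture coincides with the paper's: micro--macro split $g_\ve=g_\ve^\top+g_\ve^\bot$, control of $g_\ve^\top$ via $|Q[g_\ve]|\to0$, and for $g_\ve^\bot$ the identity $\langle\CH_{f_0}^\ve g_\ve,g_\ve\rangle=-\langle\CG_{f_0}^\ve g_\ve,\CA_{f_0}^{-1}g_\ve\rangle$ combined with Lemma~\ref{lem:G-decom}. The extra split $\chi_\ve=\CA_{f_0}^{-1}g_\ve^\top+\CA_{f_0}^{-1}g_\ve^\bot$ is harmless but unnecessary; the paper works directly with $\CA_{f_0}^{-1}g_\ve$.

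There is, however, a genuine gap in your treatment of the two quadratic remainders, the analogues of the paper's $I_3$ and $I_4$. After absorption you are left with terms of the type
\[
\int_0^T\!\!\int_W |Q[f_0]-k_\ve*Q[f_0]|^2\,\|g_\ve\|_{L^2(\BS)}^2\quad\text{and}\quad \int_0^T\!\!\int_W |Q[g_\ve]*k_\ve|^2\,\|g_\ve\|_{L^2(\BS)}^2 .
\]
Matching the $O(\ve)$ smallness of $\|Q[f_0]-k_\ve*Q[f_0]\|_{L^2}^2$ against $\|g_\ve\|_{L^\infty_{x,t}L^2(\BS)}^2\le C\ve^{-1}$ gives only $O(1)$, not $o(1)$; interpolating $\|g_\ve\|_{L^2(\BS)}^2$ between $L^1_{x,t}$ and $L^\infty_{x,t}$ and $Q[f_0]-k_\ve*Q[f_0]$ between $L^2$ and $L^\infty$ is likewise exactly borderline. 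For the second term the situation is worse: $\|Q[g_\ve]*k_\ve\|_{L^2(W\times(0,T))}$ tends to zero without any rate, so it cannot compensate the $\ve^{-1}$ growth. You correctly flag this as ``borderline'', but the argument as written does not close.

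The paper resolves this not by absorption but by the pointwise Nash-type inequality \eqref{ineq:itpl}, namely $\|g_\ve(\cdot,x,t)\|_{L^2(\BS)}^2\le C\big(1+\|\CR g_\ve(\cdot,x,t)\|_{L^2(\BS)}\big)$. This replaces the quadratic factor $\|g_\ve\|_{L^2(\BS)}^2$ by something \emph{linear} in a quantity that is uniformly bounded in $L^2(W\times(0,T))$ thanks to \eqref{eq:1.71}. One then bounds directly (no Young, no absorption)
\[
|I_3|\le C\,|Q[f_0]-k_\ve*Q[f_0]|\,(1+\|\CR g_\ve\|_{L^2(\BS)}),\qquad |I_4|\le C\,|Q[g_\ve]*k_\ve|\,(1+\|\CR g_\ve\|_{L^2(\BS)}),
\]
and a single Cauchy--Schwarz in $(x,t)$ pairs the small $L^2$-factor with the bounded $L^2$-factor, yielding $o(1)$. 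This linearization is the missing ingredient in your proposal.
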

\begin{proof}
 Let $g_\ve=f_\ve-f_0$. Then it is equivalent to prove $\lim_{\ve\to 0}\|g_\ve\|_{L^2(W\times \BS\times (0,T))}=0$. First recall from \eqref{limit:Qf} that
\begin{align}\label{estimate:Qg}
Q[g_\ve]\xrightarrow{\ve\to 0}0\quad \text{strongly in } C([0,T];L^2 ({W})),
\end{align}
and thus $\lim_{\ve\to 0}\|g^\top_\ve\|_{L^2( \BS\times{W}\times (0,T))}=0$ by Lemma \ref{lem:lowerbound}.
Therefore, we only need to prove
 \begin{equation}\label{eq:1.74}
   \lim_{\ve\to 0}\|g^\bot_\ve\|_{L^2(\BS \times W\times (0,T))}=0.
 \end{equation}
 To this end, it follows from Lemma \ref{lem:lowerbound} that
\begin{align}\nonumber
& C\|g_\ve^\bot\|^2_{L^2(\BS\times W\times (0,T))}    \leq\int_{W\times (0,T)}\langle \CH^\ve_{f_0} g_\ve,g_\ve\rangle
\ud x\ud t-\alpha\int_{W\times (0,T)} Q[g_\ve]:(Q[g_\ve]-k_\ve* Q[g_\ve]) \ud x\ud t.
\end{align}
 By (\ref{estimate:Qg}), the second term on the right hand side will tend to 0 as $\ve\to 0$. Thus, it suffices to estimate the first term. To this end, we employ
Lemma \ref{lem:G-decom} to obtain
\begin{align}\nonumber
&  \langle \CH^\ve_{f_0} g_\ve,g_\ve\rangle  = -\langle\CG_{f_0}^\ve g_\ve, \CA_{f_0}^{-1}g_\ve\rangle\\ \nonumber
   &=-\int_\BS \CR\cdot(\sqrt{f_\ve}h_\ve-f_0\CR(\CU_\ve-\CU_0)[f_0]-g_\ve\CR(\CU_\ve-\CU_0)[f_0]-g_\ve\CR \CU_\ve[g_\ve])(\CA_{f_0}^{-1}g_\ve)\ud m\\\nonumber
   &= -\int_\BS \CR\cdot(\sqrt{f_\ve}h_\ve)(\CA_{f_0}^{-1}g_\ve)\ud m + {\int_\BS\CR\cdot\( f_0\CR(\CU_\ve-\CU_0)[f_0]\)(\CA_{f_0}^{-1}g_\ve)\ud m}  \\\nonumber
   &\quad+ \int_\BS \CR\cdot\(g_\ve\CR(\CU_\ve-\CU_0)[f_0]\)(\CA_{f_0}^{-1}g_\ve)\ud m
   + \int_\BS \CR\cdot\(g_\ve\CR\CU_\ve[g_\ve]\)(\CA_{f_0}^{-1}g_\ve)\ud m\nonumber\\
   &=: I_1+I_2+I_3+I_4. \label{estimate:g}
\end{align}
To estimate $\{I_j\}_{1\leq j\leq 4}$, we need some inequalities. {Since $\int_\BS g_\ve\ud m=0$,  we recall Poincar\'{e}-Wirtinger  inequality and \eqref{eq:inter} that}
\begin{align}\label{ineq:itpl}
\|g_\ve\|_{L^2(\BS)}^2 \le C_1 \|\CR g_\ve\|^2_{L^2(\BS)},\quad \|g_\ve\|_{L^2(\BS)}^2\le  C_1\(1+\|\CR g_\ve\|_{L^2(\BS)} \),
\end{align}
where $C_1$ only depends on $\BS$.
In addition, the estimate \eqref{eq:1.71} gives rise to
\begin{align}\label{estimate:Rg}
\int_0^T\int_W\|\CR g_\ve\|_{L^2(\BS)}^2\ud x\ud t\le C.
\end{align}
The above two estimates will be repeatedly used.

\medskip

\textit{Estimate  of  $I_1$.} First, we have
\begin{align*}
   |I_1|= &\left| \int_\BS\CR\cdot (\sqrt{f_\ve}h_\ve)\CA_{f_0}^{-1}g_\ve\ud m\right|
     = \left|-\int_\BS \sqrt{f_\ve}h_\ve\cdot\CR\CA_{f_0}^{-1}  g_\ve\ud m\right|\\
     \leq &\left( \int_\BS |h_\ve|^2 \ud m\right)^{1/2}\left(\int_\BS f_\ve|\CR\CA_{f_0}^{-1}  g_\ve|^2\ud m\right)^{1/2}.
  \end{align*}
By \eqref{ineq:itpl}, the Sobolev inequality   and definition of $\mathcal{A}_{f_0}$ at \eqref{eq:1.72},
  \begin{align*}
\int_\BS f_\ve|\CR\CA_{f_0}^{-1}  g_\ve|^2\ud m
    =&\int_\BS g_\ve|\CR\CA_{f_0}^{-1}  g_\ve|^2\ud m+\int_\BS f_0|\CR\CA_{f_0}^{-1}  g_\ve|^2\ud m\\
    \le& C \|g_\ve\|_{L^2(\BS)}\|\CR\mathcal{A}_{f_0}^{-1}g_\ve\|^2_{L^4(\BS)}+C\|g_\ve\|_{L^2(\BS)}^2\\
    \le& C\|g_\ve\|_{L^2(\BS)}^3+C\|g_\ve\|_{L^2(\BS)}^2\\
    \le& C\big( 1 +  \|\CR g_\ve\|_{L^2(\BS)}^\frac32+\|\CR g_\ve\|_{L^2(\BS)}\big).
    \end{align*}
The previous two inequalities together imply
\begin{equation*}
|I_1|\leq  C \|h_\ve\|_{L^2(\BS)}\big(1+{\|\CR g_\ve\|_{L^2(\BS)}}\big).
\end{equation*}
Then it follows from \eqref{push1} that
\begin{equation}\label{20150626bound2}
 \frac 1{\ve^2}\int_{\BS\times{\RR^d}\times (0,T)}    |h_\ve|^2\ud m\ud x\ud t\leq C,
\end{equation}
which together with (\ref{estimate:Rg}) gives
\begin{align}\label{estimate:I1}
\int_0^T\int_W|I_1| \ud x\ud t \le  C\|h_\ve\|_{L^2(\BS\times{W}\times (0,T))}\big(1+\|\CR g_\ve\|_{L^2(\BS\times{W}\times (0,T))}\big) \xrightarrow{\ve\to 0} 0.
\end{align}

\textit{Estimate  of  $I_2$.} It follows from \eqref{relation:U} and \eqref{def:L} that
\begin{equation}\label{eq:1.73}
  \CU_\ve[f_0]-\CU_0[f_0]= \alpha(m\otimes m ): (Q[f_0]-Q[f_0]*k_\ve).
\end{equation}
So  integrating by parts and then employing the above formula leads to
\begin{align*}
 |I_2|=&\Big|\int_\BS \CR\cdot( f_0\CR(\CU_\ve[f_0]-\CU_0[f_0]))(\CA_{f_0}^{-1}g_\ve)\ud m\Big|\nonumber\\
   =&\Big|\int_\BS   (\CU_\ve[f_0]-\CU_0[f_0]) g_\ve\ud m\Big|
    =\alpha \big|Q[g_\ve]:(Q[f_0]-Q[f_0]*k_\ve)\big|.
\end{align*}
{From \eqref{limit:Qf} and the  properties of convolution}, we know that
\begin{align*}
\lim_{\ve\to 0}  \|Q[f_0]-Q[f_0]*k_\ve \|_{L^2(W\times (0,T))} =0,
\end{align*}
which together with  (\ref{estimate:Qg}) implies
\begin{align}\label{estimate:I2}
\int_0^T\int_W|I_2| \ud x\ud t \le C\left\|Q[f_0]-Q[f_0]*k_\ve \right\|_{L^2(W\times (0,T))}\|Q[g_\ve]\|_{L^2(W\times (0,T))}\xrightarrow{\ve\to 0} 0.
\end{align}

\textit{Estimate  of  $I_3$.} Using \eqref{eq:1.73}, we get
\begin{equation*}
 \begin{split}
  | I_3|
   &=\Big|\int_\BS g_\ve  \CR (\CU_\ve[f_0]-\CU_0[f_0])\cdot\CR\CA_{f_0}^{-1} g_\ve \ud m\Big|\\
   &\le C\big|Q[f_0]-Q[f_0]* k_\ve\big| \|g_\ve\|_{L^2(\BS)}{\|\CR\mathcal{A}_{f_0}^{-1}g_\ve\|_{L^2(\BS)}}\\
   &\le C\big|Q[f_0]-Q[f_0]* k_\ve\big| \|g_\ve\|_{L^2(\BS)}^2\\
   &\le C\big|Q[f_0]-Q[f_0]* k_\ve\big|{\(1+\|\CR g_\ve \|_{L^2(\BS)}\)},
 \end{split}
\end{equation*}
where in the last step we employed \eqref{ineq:itpl}. Thus we obtain
\begin{equation}\label{estimate:I3}
  \int_0^T\int_{W}|I_3| \ud x\ud t
  \le  C\big\|Q[f_0]-Q[f_0]*k_\ve \big\|_{L^2(W\times (0,T))}{\(1+\|\CR g_\ve \|_{L^2(\BS\times W\times (0,T))}\)}\xrightarrow{\ve\to 0} 0.
\end{equation}

\textit{Estimate  of  $I_4$.} Using \eqref{eq:RU} and \eqref{ineq:itpl}, we can also estimate  $I_3$ in a similar way,
  \begin{equation*}
 |I_4|  =\Big|\int_\BS g_\ve\CR \CU_\ve [g_\ve]\cdot\CR(\CA_{f_0}^{-1}g_\ve)\ud m\Big|\leq C\big|(Q[g_\ve]*k_\ve)\big|{\(1+\|\CR g_\ve\|_{L^2(\BS)}\)}.
  \end{equation*}
Choose a compact subset $V\subset \RR^d$ such that $W\subset B_r\subset B_{2r}\subset V$ for some $r>0$. Then
it follows from  \eqref{limit:Qf} that
\begin{align*}
 \|Q[g_\ve]*k_\ve\|_{L^2({W\times[0,T]})} \le C\|Q[g_\ve]\|_{L^2({V\times[0,T]})} \xrightarrow{\ve\to 0} 0,
\end{align*}
which yields
\begin{align}\label{estimate:I4}
\lim_{\ve\to 0}\int_0^T\int_W|I_4| \ud x\ud t \le C \lim_{\ve\to 0}\left\|Q[g_\ve]*k_\ve \right\|_{L^2(W\times (0,T))}{\(1+\|\CR g_\ve\|_{L^2(\BS\times W\times (0,T))}\)}=0.
\end{align}
Thus \eqref{estimate:g}, \eqref{estimate:I1}, \eqref{estimate:I2}, \eqref{estimate:I3} and \eqref{estimate:I4} together imply \eqref{eq:1.74} and the proof is completed.
\end{proof}

\section{ Completing the proof of Theorem \ref{thm:converge}}

We start with a lemma involving the commutator:
\begin{Lemma}\label{lem:commu-convergence} For any $\varphi,\psi\in C_c^\infty(\RR^d)$ and $T>0$, it holds that
\begin{equation*}
{  [\mathcal{T}_\ve, \varphi(x)] (\psi Q[f_\ve ])\xrightarrow{\ve\to 0} -i\sqrt{\tfrac\mu {{2d}}}[ \nabla,\varphi(x)](\psi Q[f_0])\quad\text{strongly in}~L^\infty\big(0,T;L^2(\RR^d)\big)},
\end{equation*}
where $f_0$ is the limiting equilibrium distribution in Proposition \ref{prop:compact2}.
\end{Lemma}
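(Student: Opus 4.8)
The plan is to reduce the whole statement to a single quantitative bound on the symbol error of $\mathcal{T}_\ve$ versus $-i\sqrt{\mu/2d}\,\nabla$, and then feed into it the two facts already available about $\psi Q[f_\ve]$: its strong $L^2$-convergence and its uniform $H^1$-bound. Write $v_\ve:=\psi Q[f_\ve]$ and $v_0:=\psi Q[f_0]$. From Proposition \ref{prop:compact2} (in particular \eqref{limit:Qf}) and the compact support of $\psi$ we have $v_\ve\to v_0$ in $C([0,T];L^2(\RR^d))$; moreover, since $Q[f_0]=\Psi\in L^\infty(0,T;H^1_{loc}(\RR^d))$ — equivalently $Q[f_0]=S_2(n\otimes n-\tfrac13 I_3)$ with $n-e_0\in L^\infty(0,T;H^1(\RR^d))$ by \eqref{eq:1.27} and $|n|\equiv 1$ — we get $v_0\in L^\infty(0,T;H^1(\RR^d))$ with $M:=\operatorname{ess\,sup}_{t\in[0,T]}\|v_0(t)\|_{H^1(\RR^d)}<\infty$. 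Set $S_\ve:=[\mathcal{T}_\ve,\varphi]+i\sqrt{\tfrac{\mu}{2d}}[\nabla,\varphi]$; since $[\nabla,\varphi]w=(\nabla\varphi)w$, the quantity whose smallness we must prove splits as
\[
[\mathcal{T}_\ve,\varphi]v_\ve-\Big(-i\sqrt{\tfrac{\mu}{2d}}\Big)[\nabla,\varphi]v_0 \;=\; [\mathcal{T}_\ve,\varphi](v_\ve-v_0)\;+\;S_\ve v_0 .
\]
By Lemma \ref{lem:commu-l2}, $\|[\mathcal{T}_\ve,\varphi](v_\ve-v_0)\|_{L^\infty(0,T;L^2)}\le C\|v_\ve-v_0\|_{L^\infty(0,T;L^2)}\to 0$, so everything reduces to showing $\|S_\ve v_0\|_{L^\infty(0,T;L^2)}\to 0$.

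The key step is the estimate: there is $C=C(\varphi,\mu,d)$ and, for each $R\ge 1$, quantities $\omega_\ve(R),\rho(R)$ with $\lim_{\ve\to 0}\omega_\ve(R)=0$ (for fixed $R$) and $\lim_{R\to\infty}\rho(R)=0$, such that
\[
\|S_\ve v\|_{L^2(\RR^d)}\;\le\; C\,\omega_\ve(R)\,\|v\|_{L^2(\RR^d)}\;+\;\rho(R)\,\|v\|_{H^1(\RR^d)},\qquad \forall v\in H^1(\RR^d),\ \forall R\ge 1 .
\]
Granting this, for $\delta>0$ one first picks $R$ with $\rho(R)M<\delta/2$, then $\ve_0$ with $C\omega_\ve(R)M<\delta/2$ for $\ve<\ve_0$; since $\|v_0(t)\|_{H^1}\le M$ for a.e.\ $t$, this yields $\|S_\ve v_0(t)\|_{L^2}<\delta$ for a.e.\ $t$ and all $\ve<\ve_0$, hence $\|S_\ve v_0\|_{L^\infty(0,T;L^2)}\to 0$ and the lemma follows. (As a consistency check, the limit agrees with the weak-$\ast$ limit already obtained in Lemma \ref{variation5}.)

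To prove the displayed estimate I would pass to the Fourier side. By \eqref{defteps} the multiplier of $\mathcal{T}_\ve$ is $\ve^{-1/2}h(\sqrt\ve\,\xi)$, while that of $-i\sqrt{\mu/2d}\,\nabla$ is $\pi\sqrt{2\mu/d}\,\xi$; setting $q_\ve(\xi):=\ve^{-1/2}h(\sqrt\ve\,\xi)-\pi\sqrt{2\mu/d}\,\xi$ for the symbol error, a direct computation gives $\widehat{S_\ve v}(\xi)=\int_{\RR^d}\hat\varphi(\xi-\zeta)\big(q_\ve(\xi)-q_\ve(\zeta)\big)\hat v(\zeta)\,d\zeta$. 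The point is that $q_\ve$ satisfies, uniformly in $\ve\in(0,1]$: $|q_\ve(\xi)|\le C|\xi|$; a uniform Lipschitz bound $|q_\ve(\xi)-q_\ve(\zeta)|\le L|\xi-\zeta|$ (because $\nabla[\ve^{-1/2}h(\sqrt\ve\,\cdot)](\xi)=(\nabla h)(\sqrt\ve\,\xi)$ is bounded by $\mathrm{Lip}(h)$, Lemma \ref{lipschitz1}); and $\omega_\ve(r):=\sup_{|\xi|\le r}|q_\ve(\xi)|\to 0$ as $\ve\to 0$ for each fixed $r$ (from the $C^2$ expansion $\hat k(y)=1-\tfrac{2\pi^2\mu}{d}|y|^2+o(|y|^2)$ encoded in \eqref{ftrans1}, whose remainder is uniform on compacta, so $\ve^{-1/2}h(\sqrt\ve\,\xi)=\xi\sqrt{(1-\hat k(\sqrt\ve\,\xi))/(\ve|\xi|^2)}\to\pi\sqrt{2\mu/d}\,\xi$ uniformly on balls). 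Splitting the $\zeta$-integral at $|\zeta|=R$ and, within $\{|\zeta|\le R\}$, the $\xi$-region at $|\xi|=2R$, one controls the near-diagonal piece by $\omega_\ve(2R)\|v\|_{L^2}$, the $\{|\zeta|>R\}$ piece by $R^{-1}\|v\|_{H^1}$ using the Lipschitz bound together with $|\hat v(\zeta)|\le R^{-1}(1+|\zeta|)|\hat v(\zeta)|$, and the remaining off-diagonal piece (where $|\xi-\zeta|\ge|\xi|/2$) by the rapid decay of $\hat\varphi$; Young's inequality applies throughout since $\varphi\in C_c^\infty$ makes $\hat\varphi$ and $|\cdot|\hat\varphi$ integrable. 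Collecting the three contributions yields the estimate.

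I expect the genuine obstacle to be the uniformity in $t\in[0,T]$ demanded by the $L^\infty(0,T;L^2)$-topology. Because the only time-regularity at hand for $Q[f_0]$ is $\partial_t Q[f_0]\in L^2((0,T)\times\RR^d)$ (see \eqref{eq:1.27}), the family $\{v_0(t)\}_{t\in[0,T]}$ is precompact in $L^2(\RR^d)$ but \emph{not} in $H^1(\RR^d)$, so one cannot invoke an Aubin--Lions compactness in $H^1$ to get uniform operator convergence $\mathcal{T}_\ve\to -i\sqrt{\mu/2d}\,\nabla$ along this family. The quantitative estimate above, whose right-hand side depends on $v$ only through $\|v\|_{L^2}$ and $\|v\|_{H^1}$ — both bounded uniformly in $t$ by $M$ — is precisely what circumvents this; everything else (uniform $L^2$-boundedness of the commutators from Lemma \ref{lem:commu-l2}, the strong $C([0,T];L^2)$-convergence $v_\ve\to v_0$ from Proposition \ref{prop:compact2}, and the Fourier splitting) is routine.
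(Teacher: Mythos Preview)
Your decomposition into $[\mathcal{T}_\ve,\varphi](v_\ve-v_0)+S_\ve v_0$ is exactly the paper's splitting into $I_1+I_2$, and the treatment of the first piece via Lemma~\ref{lem:commu-l2} together with \eqref{limit:Qf} is identical. The only difference is in the second piece: the paper simply cites Lemma~\ref{converge-T} (since $\psi Q[f_0]\in L^\infty(0,T;H^1(\RR^d))$) and declares $\|I_2\|_{L^\infty(0,T;L^2)}\to 0$, without spelling out the uniformity in $t$ that you correctly flag as the subtle point. Your quantitative frequency-splitting estimate $\|S_\ve v\|_{L^2}\le C\,\omega_\ve(R)\|v\|_{L^2}+\rho(R)\|v\|_{H^1}$ is a clean way to supply that uniformity, and the Fourier argument you sketch (uniform Lipschitz bound on $q_\ve$ from Lemma~\ref{lipschitz1}, uniform smallness on balls from \eqref{ftrans1}, Young's inequality) is correct. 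An alternative, closer in spirit to the paper's brevity, is to observe that the orbit $\{v_0(t):t\in[0,T]\}$ is compact in $L^2(\RR^d)$ (by \eqref{limit:Qf} and the compact support of $\psi$), each $v_0(t)$ lies in $H^1$, and $S_\ve$ is uniformly bounded on $L^2$ by Lemma~\ref{lem:commu-l2}; then the pointwise-in-$v$ convergence $S_\ve v\to 0$ for $v\in H^1$ from Lemma~\ref{converge-T} upgrades to uniform convergence on the compact orbit via a standard finite-cover argument. Either route closes the gap; yours has the advantage of being fully quantitative.
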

\begin{proof}
 We have
\begin{equation*}
  \begin{split}
 & ~[\mathcal{T}_\ve, \varphi(x)] (\psi Q[ {f_\ve}])+[i\sqrt{\tfrac \mu {{2d}}} \nabla,\varphi(x)](\psi Q [ f_0])\\
  =&~  {[\mathcal{T}_\ve, \varphi(x)] \(\psi Q [ {f_\ve}]-  \psi Q [f_0]\)}+  [\mathcal{T}_\ve+i\sqrt{\tfrac \mu {2d}} \nabla, \varphi(x)] (\psi Q [f_0])=: I_1+I_2.
  \end{split}
\end{equation*}
The estimate of $I_1$ follows from   the commutator estimate \eqref{convolu1}, Proposition \ref{compactness1} and Proposition \ref{prop:compact2}: there exists constant $C$ depending on $\varphi,\psi$ such that
\begin{equation*}
 \begin{split}
    &\|[\mathcal{T}_\ve, \varphi(x)] \(\psi Q [ f_\ve ]- \psi Q [f_0]\)\|_{C([0,T];L^2(\RR^d))}\\
    &\leq  C\|\psi Q [ f_\ve ]-  \psi Q [f_0]\|_{C([0,T];L^2(\RR^d))} \xrightarrow{\ve\to 0}0.
 \end{split}
\end{equation*}
To treat $I_2$, it follows from \eqref{eq:1.27} that  $\nabla Q[f_0](x)\in L^\infty(0,T;L^2(\RR^d))$ and thus  $\psi Q[f_0]\in L^\infty(0,T;H^1(\RR^d))$. Consequently we
deduce from Lemma \ref{converge-T} that
$$\lim_{\ve\to0}\|I_2\|_{L^\infty(0,T;L^2(\RR^d))}=0$$
 and the proof is completed.
\end{proof}
The following lemma can be found in \cite{EZ,KD} and we give a more detailed proof here.
\begin{Lemma}\label{Appd:lem:product}
For any  fixed vector $u,v\in\mathbb{R}^3$, the following formula holds
  \begin{equation*}
  \int_\BS ( u\cdot\CR f_0)~\CA_{f_0}^{-1}(v\cdot\CR f_0)\ud m=\gamma \Big(u\cdot v-( u\cdot n)(v\cdot n)\Big),
\end{equation*}
where $\gamma=\gamma(\alpha)$ is a positive constant and $f_0=\frac 1Z e^{\eta(m\cdot n(t,x))^2}$.
\end{Lemma}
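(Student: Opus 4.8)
The plan is to view the integral as a symmetric bilinear form in $(u,v)$, determine which invariant tensors can appear, and then fix the surviving coefficient. Set
\[
  B(u,v):=\int_\BS (u\cdot\CR f_0)\,\CA_{f_0}^{-1}(v\cdot\CR f_0)\ud m .
\]
First I would check this is well-defined: applying \eqref{identity:R-1} with the constant function gives $\int_\BS\CR_i f_0\ud m=0$, so $v\cdot\CR f_0\in\mathcal{P}_0(\BS)$; and $\CA_{f_0}=-\CR\cdot(f_0\CR\,\cdot\,)$ is self-adjoint on $L^2(\BS)$ with $\langle\CA_{f_0}\phi,\phi\rangle=\int_\BS f_0|\CR\phi|^2\ud m$, whose kernel is exactly the constants, hence $\CA_{f_0}$ is boundedly invertible on $\mathcal{P}_0(\BS)$; the constant ambiguity is annihilated when pairing against the mean-zero function $u\cdot\CR f_0$. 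Self-adjointness of $\CA_{f_0}$, and thus of $\CA_{f_0}^{-1}$ on $\mathcal{P}_0(\BS)$, yields $B(u,v)=B(v,u)$, and bilinearity is evident.

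Next I would exploit the rotational symmetry about $n$. For $R\in SO(3)$ with $Rn=n$ one has $f_0\circ R=f_0$ (since $f_0$ depends on $m$ only through $(m\cdot n)^2$) and, because $R$ preserves the cross product, the rotational gradient is equivariant, $\CR(\phi\circ R)=R^{T}\big((\CR\phi)\circ R\big)$. Combining these two facts shows that $\CA_{f_0}$ commutes with the $L^2(\BS)$-isometry $\phi\mapsto\phi\circ R$, hence so does $\CA_{f_0}^{-1}$ on $\mathcal{P}_0(\BS)$; the same two facts give $(Rw)\cdot\CR f_0=(w\cdot\CR f_0)\circ R^{-1}$. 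Together with the change of variables $m\mapsto Rm$ on $\BS$ these identities give $B(Ru,Rv)=B(u,v)$ for all such $R$. A symmetric bilinear form on $\RR^3$ invariant under all rotations fixing $n$ must be of the form $B(u,v)=a\,(u\cdot v)+b\,(u\cdot n)(v\cdot n)$ (the cross terms between $\RR n$ and $n^\perp$ die by the $180^\circ$ rotation about $n$, and $B$ restricted to the irreducible plane $n^\perp$ is a multiple of the inner product); after reducing to $n$ fixed, $a,b$ depend only on $\eta=\eta_1(\alpha)$, hence only on $\alpha$.

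It remains to identify $a$ and $b$. Taking $u=v=n$ and using \eqref{eq:Rf}, i.e. $\CR f_0=2\eta\,(m\wedge n)(m\cdot n)f_0$, together with $n\cdot(m\wedge n)=0$, we get $n\cdot\CR f_0=0$, hence $B(n,n)=0$, i.e. $a+b=0$; therefore $B(u,v)=a\big(u\cdot v-(u\cdot n)(v\cdot n)\big)$, which already has the asserted form with $\gamma:=a$. For positivity, pick a unit vector $e\perp n$; since $\CA_{f_0}^{-1}$ is a positive operator on $\mathcal{P}_0(\BS)$,
\[
  \gamma=a=B(e,e)=\big\langle \CA_{f_0}^{-1}(e\cdot\CR f_0),\,e\cdot\CR f_0\big\rangle_{L^2(\BS)}\ \geq\ 0 ,
\]
with equality only if $e\cdot\CR f_0\equiv 0$ in $L^2(\BS)$; but $e\cdot\CR f_0=2\eta\,(m\cdot n)\big((m\wedge n)\cdot e\big)f_0$ is not identically zero on $\BS$ because $\eta=\eta_1(\alpha)>0$ for $\alpha>7.5$ (Proposition \ref{prop:critical}, Proposition \ref{prop:critical-stab} and Lemma \ref{uniaxial1}). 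Hence $\gamma>0$, which finishes the proof.

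The main fiddly point is the bookkeeping in the equivariance step of the second paragraph (getting the transposes and the direction of composition right); this could be avoided, at the cost of a considerably longer computation, by fixing $n=e_0=(0,0,1)$ and solving $-\CR\cdot(f_0\CR\psi_i)=e_i\cdot\CR f_0$ for $i=1,2$ explicitly in spherical coordinates, which exhibits $\gamma$ as a ratio of one-dimensional integrals of $e^{\eta z^2}$ against polynomials on $[-1,1]$, manifestly positive when $\eta>0$; the symmetry argument above is shorter and is the route I would take.
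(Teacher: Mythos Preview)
Your proof is correct and takes a genuinely different route from the paper. The paper fixes $n=(0,0,1)$, writes $\psi_0:=\CA_{f_0}^{-1}(v\cdot\CR f_0)$ explicitly in spherical coordinates, reduces the equation $-\CR\cdot(f_0\CR\psi_0)=v\cdot\CR f_0$ to a second-order ODE for a function $g_0(\cos\theta)$ via the ansatz $\psi_0=-v\cdot\ee_\phi\, g_0(\cos\theta)$, and then evaluates the integral directly using the $\phi$-integration to obtain the bilinear form $\gamma\,u\cdot[v-(v\cdot n)n]$; positivity of $\gamma$ is then read off from the positivity of $\CA_{f_0}^{-1}$. Your argument instead treats the integral as a symmetric bilinear form, uses the rotational symmetry of $f_0$ about $n$ together with the equivariance of $\CR$ to deduce the form $a(u\cdot v)+b(u\cdot n)(v\cdot n)$ from invariant theory, and then kills the second coefficient via $n\cdot\CR f_0=0$. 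The advantage of the paper's computation is that it produces an explicit integral expression for $\gamma$ in terms of $f_0$ and $g_0$, which can be useful if one wants to track constants; the advantage of your approach is that it is shorter, coordinate-free, and makes transparent why only the combination $u\cdot v-(u\cdot n)(v\cdot n)$ can appear. Your closing remark is apt: the explicit spherical-coordinate calculation you allude to as the alternative is precisely what the paper does.
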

\begin{proof}
Since the conclusion is made for  fixed $(t,x )$,  we can assume $n(t,x)=(0,0,1)$ without loss of generality.  Set $\psi_0:=\CA_{f_0}^{-1}(v\cdot\CR f_0)$, then $\psi_0$ solves the follow elliptic equation on $\BS$:
\begin{equation}\label{eq:1.77}
-\CR\cdot(f_0\CR{\psi_0})=v\cdot\CR{f_0}.
\end{equation}
It follows from Fredholm alternative that \eqref{eq:1.77} has a unique solution up to a constant.
On the other hand, since $f_0$ is a local equilibrium, according to Proposition \ref{prop:critical},
$$ \log f_0+\CU_0[f_0]\equiv const.$$
The previous two formulas together imply
\begin{equation}\label{eq:1.76}
\Delta_\BS \psi_0-\CR{u_0}\cdot\CR\psi_0=\CR\cdot\CR{\psi_0}-\CR{u_0}\cdot\CR\psi_0=v\cdot\CR{u_0},
\end{equation}
where $f_0$ and $u_0=\CU_0[f_0]$  only depend on $m_3=m\cdot n(t,x)$. If we assume $\theta\in [0,\pi]$ be the angle between $m$ and $n$ and work under spherical coordinate system $(\theta, \phi)$ with  $\phi\in [0,2\pi)$, then
$m=(\sin\theta\cos\phi, \sin\theta\sin\phi, \cos\theta)$  and it follows from \eqref{eq:1.75} that
\begin{equation}\label{eq:1.51}
  \CR u_0=(-\sin\phi,\cos\phi,0)\frac{\ud u_0}{\ud\theta},\quad \CR f_0=(\sin\phi,-\cos\phi, 0)\frac{\ud f_0(\cos\theta)}{\ud \theta}
\end{equation}
and an explicit formula for $\CR\psi_0(\theta,\phi)$ is available. Then we obtain the following identity through straightforward computation
\[\CR{u_0}\cdot\CR\psi_0=\frac{\ud{u_0}}{\ud\theta}\frac{\partial\psi_0}{\partial\theta}.\]
So we can rewrite \eqref{eq:1.76} in terms of spherical coordinate:
\begin{equation}\label{eq:1.78}
\frac{1}{\sin\theta}\frac{\partial}{\partial\theta}\Big(\sin\theta\frac{\partial\psi_0}{\partial\theta}\Big)
+\frac{1}{\sin^2\theta}\frac{\partial^2{\psi_0}}{\partial\phi^2}
-\frac{\ud{u_0}}{\ud\theta}\frac{\partial\psi_0}{\partial\theta}=v\cdot\ee_\phi\frac{\ud{u_0}}{\ud\theta},
\end{equation}
where $\ee_\phi:=-\frac{m\wedge n}{|m\wedge n|}=(-\sin\phi, \cos\phi, 0)\in\BS$. If  we plug the ansatz
\begin{equation}\label{eq:1.79}
\psi_0(\theta,\phi)=-v\cdot\ee_\phi{g_0}(\cos\theta)=v\cdot(-\sin\phi, \cos\phi, 0){g_0}(\cos\theta)
\end{equation}
into \eqref{eq:1.78}, then   $g_0$ satisfies  the following ODE \cite{KD}:
\begin{equation*}
\frac{1}{\sin\theta}\frac{\ud}{\ud\theta}\Big(\sin\theta\frac{\ud{g_0}}{\ud\theta}\Big)
-\frac{g_0}{\sin^2\theta}
-\frac{\ud{u_0}}{\ud\theta}\frac{\ud{g_0}}{\ud\theta}=-\frac{\ud{u_0}}{\ud\theta}.
\end{equation*}

Then we compute using \eqref{eq:1.51} and \eqref{eq:1.79}
\begin{equation}\label{eq:1.41}
  \begin{split}
     &\int_\BS ( u\cdot\CR f_0)~\CA_{f_0}^{-1}(v\cdot\CR f_0)\ud m\\
=&-  \int_\BS u\cdot(\sin \phi,-\cos\phi,0)\frac{\ud{f_0(\cos\theta)}}{\ud\theta} v\cdot(\sin \phi,-\cos\phi,0) g_0(\cos\theta)\ud m\\
=&-\int_0^{\pi}\sin\theta\int_0^{2\pi}\sin\theta(u_1\sin\varphi-u_2\cos\varphi)f_0'(\cos\theta)(v_1\sin\varphi
-v_2\cos\varphi)g_0(\cos\theta)\ud\theta\ud\varphi\\
=&\frac{1}{2}(u_1v_1+u_2v_2)2\pi\int_0^{\pi}\frac{\ud{f_0(\cos\theta)}}{\ud\theta}g_0(\cos\theta)\sin\theta\ud\theta={\gamma} u\cdot\big[v-(v\cdot n) n],
  \end{split}
\end{equation}
which concludes our lemma with $${\gamma=\pi\int_0^{\pi}\frac{\ud{f_0(\cos\theta)}}{\ud\theta}g_0(\cos\theta)\sin\theta\ud\theta}.$$ Note that $\gamma$ is a constant only depending on $\alpha$.
Thanks to the positivity of $\CA_{f_0}$  and hence $\CA_{f_0}^{-1}$  on $\mathcal{P}_0(\BS)$, by choosing $u=v$ in \eqref{eq:1.41}, we infer  $\gamma > 0$ since   $\CR f_0$ can not be zero on $\BS$ when $\alpha>7.5$.
\end{proof}
\smallskip

{\it Proof of  Theorem \ref{thm:converge}  Part (ii)}.
In the sequel,  we choose any $\Theta(x)\in C^\infty_c(\RR^d;\RR^3)$ and $\varphi(t)\in C_c^\infty(\RR_+;\RR)$ and
use $\psi(m,x,t):=\varphi(t)\CA_{f_0}^{-1}\big(\Theta(x)\cdot\CR f_0\big)$ as a test function for \eqref{main:eqn}. Denote ${\Omega= \RR^d\times  \RR_+}$. Then we have
\begin{align*}
\int_{\BS\times\Omega}\p_tf_\ve(m,x,t)\psi(m,x,t) \ud m\ud x\ud t
= \frac 1\ve\int_{\BS\times\Omega}\CR\cdot(f_\ve \CR \mu_\ve[f_\ve])\psi(m,x,t)\ud m\ud x\ud t.
\end{align*}
On the other hand, we have for almost every $(x,t)\in\RR^d\times \RR_+$ that
\begin{align*}
&\frac 1\ve\int_\BS\CR\cdot(f_\ve \CR \mu_\ve[f_\ve])\psi\ud m
=-\frac 1\ve\int_\BS  \CR \mu_\ve[f_\ve] \cdot f_\ve\CR\psi \ud m=\frac 1\ve\int_\BS   \mu_\ve[f_\ve] \CR\cdot (f_\ve\CR\psi) \ud m\\
=& -\frac 1\ve\int_\BS { \varphi(t) }\mu_\ve[f_\ve] \Theta(x)\cdot \CR f_\ve \ud m+\frac 1\ve\int_\BS   \mu_\ve [f_\ve]\CR\cdot (f_\ve\CR\psi+{\varphi(t)\Theta(x)} f_\ve) \ud m.
\end{align*}
The previous two identities together imply
\begin{multline}\label{eq:limit}
 \int_{\BS\times\Omega}\p_t f_\ve(m,x,t)\psi(m,x,t) \ud m\ud x\ud t=-\frac 1\ve\int_{\BS\times\Omega}\varphi(t)\mu_\ve[f_\ve] \Theta(x)\cdot \CR f_\ve\ud m\ud x\ud t\\
+\frac 1\ve\int_{\BS\times\Omega}\varphi(t)\mu_\ve [f_\ve]\CR\cdot (f_\ve\CR\psi+{\varphi(t)\Theta(x)} f_\ve)\ud m\ud x\ud t.
\end{multline}
Now, we claim the following facts:
\begin{align}\label{limit-1}
 &\lim_{\ve\to0}  \int_{\BS\times\Omega}\p_t f_\ve(m,x,t)\psi(m,x,t)\ud m\ud x\ud t={\gamma}\int_{\Omega}(\partial_t n\wedge n)\cdot \Theta(x)\varphi(t)\ud x\ud t,\\
&\lim_{\ve\to 0}-\frac 1\ve \int_{\O}\varphi(t)\Theta (x) \cdot\int_\BS   \mu_\ve[f_\ve]  \CR f_\ve\ud m\ud x\ud t=\frac{2S_2^2\alpha\mu}{d}\varepsilon^{k\ell i}\int_{\O}\varphi(t)n_{\ell}\nabla\Theta_k(x) \cdot \nabla n_{i}\ud x\ud t,   \label{limit-2}
\\ \label{limit-3}
&\lim_{\ve\to 0}\frac 1\ve\int_{\BS\times\Omega}\mu_\ve[f_\ve] \CR\cdot \big(f_\ve\CR\psi(m,x,t)+{\varphi(t)}\Theta(x) f_\ve\big)\ud m\ud x\ud t = 0.
\end{align}
Here $\gamma=\gamma(\alpha)>0$ is defined in Lemma \ref{Appd:lem:product}.\smallskip

Assuming (\ref{limit-1})-\eqref{limit-3}, we have
\[
\begin{split}
  {\gamma}\int_{\RR^d\times\RR_+} (\partial_t n\wedge n)\cdot \Theta(x)\varphi(t)\ud x\ud t =&\frac{2\alpha\mu S_2^2} d \varepsilon^{k\ell i}\int_{\RR^d\times\RR_+}\varphi(t)n_{\ell}\partial_j\Theta_k(x)  \partial_j n_{i}\ud x\ud t\\
=&\frac{2\alpha\mu S_2^2} d \int_{\RR^d\times\RR_+} \varphi(t)\partial_j\Theta(x)\cdot( n\wedge\partial_jn)\ud x\ud t,
\end{split}
\]
which implies that $n(x,t)$ is a weak solution to the harmonic map heat flow
\begin{align*}
n\wedge(\partial_tn-\Lambda\Delta n)=0
   \end{align*}
with $\Lambda=\frac{2\alpha\mu S_2^2} {\gamma d}>0$. To recover the initial data, we employ  \eqref{limit:Qf} at $t=0$ and get
\begin{equation*}
 Q[f^{in}_\ve]\xrightarrow{\ve\to 0} Q[{f_0}]|_{t=0}=S_2(n(x,0)\otimes n(x,0)-\tfrac 13 I_3) \text{ strongly in }~ L_{loc}^2(\RR^{d}).
\end{equation*}

Next we prove the claims  (\ref{limit-1})-\eqref{limit-3}.\smallskip

{\underline{\it Proof of (\ref{limit-1})}}. It follows from Proposition \ref{prop:compact2} that
 \begin{equation*}
  \lim_{\ve\to0}\int_{\BS\times\Omega}\p_t f_\ve(m,x,t)\psi(m,x,t)\ud m\ud x\ud t=\int_{\BS\times\Omega}\p_t f_0(m,x,t)\psi(m,x,t)\ud m\ud x\ud t.
\end{equation*}
Using the fact $f_0(m,x,t)=\frac1Z e^{\eta (m\cdot n(x,t))^2}$ and (\ref{eq:Rf}), we obtain
\begin{equation*}
  \begin{split}
    \CR f_0&=\frac{1}{Z}e^{\eta (m\cdot n(t,x))^2}2\eta ( m\wedge n)(n\cdot m),\\
  \partial_tf_0&=\frac{1}{Z}e^{\eta (m\cdot n(t,x))^2}2\eta (m\cdot\partial_tn)(m\cdot n)
=(\partial_t n\wedge n)\cdot\CR f_0.
  \end{split}
\end{equation*}
Thus, by Lemma \ref{Appd:lem:product}, it holds that
\begin{align*}
 \int_{\BS\times\Omega}\p_t f_0(m,x,t)\psi(m,x,t) \ud m\ud x\ud t
=&\int_{\BS\times\Omega}\varphi(t)(\partial_t n\wedge n)\cdot\CR f_0~~\CA_{f_0}^{-1}(\Theta(x)\cdot\CR f_0)\ud m\ud x\ud t\\
=&{\gamma}\int_{\RR^d\times \RR_+} (\partial_t n\wedge n)\cdot \Theta(x)\varphi(t)\ud x\ud t,
\end{align*}
which gives  \eqref{limit-1}.\smallskip

{\underline{\it Proof of (\ref{limit-2})}}. We deduce from \eqref{eq:RU} that, for $k\in\{1,2,3\}$ and almost every $(x,t)\in\O$,
\begin{align*}
    -\frac 1\ve\int_\BS   \mu_\ve[f_\ve]  \CR_{k} f_\ve\ud m
    =& \frac 1\ve\int_\BS \CR_k  \mu_\ve[f_\ve]   f_\ve\ud m\\
    =& \frac 1\ve\int_\BS\CR_k f_\ve\ud m+ \frac 1\ve\int_\BS f_\ve\CR_k \CU_\ve[f_\ve] \ud m\\
    =&-\frac{2\alpha}\ve \int_\BS f_\ve   m_\ell m_j\varepsilon^{\ell ik}Q_{ij}[f_\ve]*k_\ve\ud m\\
    =&-\frac{2\alpha}\ve  \varepsilon^{k \ell i} Q_{\ell j}[f_\ve]Q_{ij}[f_\ve]*k_\ve\\
    =&\frac{2\alpha}\ve  \varepsilon^{k \ell i} Q_{\ell j}[f_\ve]\(Q_{ij}[f_\ve]-Q_{ij}[f_\ve]*k_\ve\)\\
     =& 2\alpha   \varepsilon^{k \ell i} Q_{\ell j}[f_\ve]\mathcal{L}_\ve Q_{ij}[f_\ve].
\end{align*}
To proceed, we choose ${\phi(x)}\in C_0^\infty(\RR^d)$ such that $\phi(x)\equiv 1$ on a bounded  open set $V$ such that
 ${\rm{supp}}~\Theta \subset V.$
 As a result, there is a constant $\delta_1>0$ such that
\begin{equation}\label{gap1}
  \rm{dist}(\rm{supp} ~\Theta ,\rm{ supp}(1-\phi))\geq \delta_1>0.
\end{equation}
Therefore, we have
\begin{align*}
    &-\frac 1\ve \int_{\RR^d}  \Theta (x) \cdot\int_\BS   \mu_\ve[f_\ve]  \CR f_\ve\ud m\ud x\\
    =&~2\alpha\int_{\RR^d} \Theta_k(x) \varepsilon^{k \ell i} Q_{\ell j}[f_\ve]\mathcal{L}_\ve Q_{ij}[f_\ve]\ud x \\
     =&~2\alpha\int_{\RR^d} \Theta_k(x) \varepsilon^{k \ell i} Q_{\ell j}[f_\ve]\mathcal{L}_\ve \big( {\phi}(x) Q_{ij}[f_\ve]\big)\ud x\\
     &+2\alpha\int_{\RR^d} \Theta_k(x) \varepsilon^{k \ell i} Q_{\ell j}[f_\ve]\mathcal{L}_\ve\big((1- {\phi}(x)) Q_{ij}[f_\ve]\big)\ud x=:L^\ve_1+L^\ve_2.
\end{align*}

According to our choice of $\phi$, $L_1^\ve$ can be written as
\begin{align*}
    L_1^\ve
    =&~2\alpha\int_{\RR^d}{ \phi(x)}  \Theta_k(x) \varepsilon^{k \ell i} Q_{\ell j}[f_\ve]\mathcal{L}_\ve \big(\phi(x) Q_{ij}[f_\ve]\big)\ud x \\
    =&-2\alpha\varepsilon^{k \ell i}\int_{\RR^d} \mathcal{T}_\ve
    \Big(\Theta_k(x)  \phi(x) Q_{\ell j}[f_\ve] \Big)\cdot  \mathcal{T}_\ve (\phi(x) Q_{ij}[f_\ve])\ud x \\
    =&-{2\alpha\varepsilon^{k \ell i}\int_{\RR^d} [\mathcal{T}_\ve,\Theta_k(x) ] ( \phi Q_{\ell j}[f_\ve]) \cdot   \mathcal{T}_\ve (\phi Q_{ij}[f_\ve])\ud x }.
\end{align*}
By Lemma \ref{lem:commu-convergence}, Lemma \ref{variation5} and the construction of $\psi$, we can pass $\ve\to 0$ in the above identity to obtain
\begin{equation}\label{eq:1.52}
  \begin{split}
    &\lim_{\ve\to 0} {\int_{\RR_+} \varphi(t) L_1^\ve\ud t} \\
    &=-2\alpha\varepsilon^{k\ell i}\int_{\RR^d\times \RR_+} \left[-i\sqrt{\tfrac\mu {{2d}}}\nabla,\Theta_k(x) \right]  \phi Q_{\ell j}[f_0]  \cdot  \left(-i\sqrt{\tfrac\mu  {{2d}}}\right)\nabla (\phi  Q_{ij}[f_0])\varphi(t)\ud x\ud t \\
    &=\frac{\alpha\mu} d \varepsilon^{k\ell i}\int_{\RR^d\times \RR_+} [\nabla,\Theta_k(x) ]  Q_{\ell j}[f_0] \cdot\nabla      Q_{ij}[f_0]\varphi(t)\ud x\ud t \\
    &={\frac{S_2^2\alpha\mu}{d}}\varepsilon^{k\ell i}\int_{\RR^d\times \RR_+}\varphi(t)n_{\ell}\nabla\Theta_k(x) \cdot \nabla n_{i}\ud x\ud t.
  \end{split}
\end{equation}
Here we have used the fact that $Q_{ik}[f_0]=S_2(n_in_k-\frac13\delta_{ik})$.

It remains to show $L_2^\ve\xrightarrow{\ve\to 0}0$. To this end, we use  \eqref{gap1} and \eqref{def:L} to rewrite
\begin{equation*}
  \begin{split}
   L_2^\ve&= 2\alpha\int_{\RR^d} \Theta_k(x) \varepsilon^{k\ell i} Q_{\ell j}[f_\ve](x)\mathcal{L}_\ve\((1-\phi(x)) Q_{ij}[f_\ve]\)\ud x\\
   &= \frac{2\alpha}\ve\int_{\RR^d} \Theta_k(x) \varepsilon^{k\ell i} Q_{\ell j}[f_\ve] (x)\Big(  (1-\phi) Q_{ij}[f_\ve]-((1-\phi) Q_{ij}[f_\ve])*k_\ve\Big)\ud x\\
   &= -\frac{2\alpha}\ve\int_{\RR^d} \Theta_k(x) \varepsilon^{k\ell i} Q_{\ell j}[f_\ve](x) \Big( \int_{\RR^d}\big(1-\phi(y)\big) Q_{ij}[f_\ve](y) k_\ve(x-y)\ud y\Big)\ud x.
  \end{split}
\end{equation*}
In view of \eqref{gap1} and \eqref{assump2}, we have
\begin{equation}\label{eq:1.53}
  {|L_2^\ve|\leq \frac{C(\alpha,\Theta,\phi)}\ve\sup_{x\in\rm{supp}(\Theta)}\int_{|x-y|\geq \delta_1}k_\ve(x-y)\ud y\xrightarrow{\ve\to 0}0.}
\end{equation}
This together with \eqref{eq:1.52}  implies \eqref{limit-2}.
\smallskip

{\underline{\it Proof of (\ref{limit-3})}}.
  We denote  ${W_{\delta,T}:=(\delta,T)\times {\rm{supp}}~\Theta(x) }$ and assume  that $\text{supp } \varphi(t)\subseteq (0,T)$.
By \eqref{eq:1.29} and Proposition \ref{prop:strong}, we have
\begin{equation*}
 \sup_{m\in\BS} |f_\ve(m,x,t)-f_0(m,x,t)|\xrightarrow{\ve\to 0} 0~\text{strongly in}~L^2(W_{\delta,T}),\qquad\forall \delta>0.
\end{equation*}
Therefore, it follows from Egorov's theorem that,  for any $\tilde{\ve}>0$, there exists $\delta >0$ and a measurable set $U\subset W_{\delta,T}$ such that $|U|+|W_{0,\delta}|\leq  \tilde{\ve} $ and modulo the extraction of a subsequence,
\begin{equation}\label{eq:1.30}
  f_\ve  \xrightarrow{\ve\to 0} f_0~\text{uniformly on}~ (W_{\delta,T}\backslash U)\times\BS.
\end{equation}
By Lemma \ref{lem:kerG}, we have
$$\CR\cdot(f_0\CR \psi(m,x,t)+f_0\varphi(t)\Theta(x))\equiv 0.$$
Thus we may write the left-hand side of \eqref{limit-3} as
\begin{equation*}
  \begin{split}
  &\frac 1\ve\int_{{\BS\times\RR^d\times \RR_+ }} \mu_\ve[f_\ve]
  \CR\cdot \big(f_\ve(\CR\psi+\varphi(t)\Theta(x))\big)\ud m \ud x\ud t\\
  &=\frac 1\ve\int_{{\BS\times (W_{\delta,T}\cup W_{0,\delta})}} \mu_\ve[f_\ve]
  \CR\cdot \big(f_\ve(\CR\psi+\varphi(t)\Theta(x))\big)\ud m \ud x\ud t\\
&=\frac 1\ve\int_{ \BS\times {(U\cup W_{0,\delta})} } \mu_\ve[f_\ve] \CR\cdot \big(f_\ve(\CR\psi+{\varphi(t)}\Theta(x))\big)\ud m \ud x\ud t\\
  &\quad+\frac 1\ve\int_{ \BS\times (W_{\delta,T}\backslash U)} \mu_\ve[f_\ve] \CR\cdot \big((f_\ve-f_0)(\CR\psi+{\varphi(t)}\Theta(x))\big)\ud m \ud x\ud t\\
   &=-\frac 1\ve\int_{\BS \times (U\cup W_{0,\delta})} \sqrt{f_\ve}\CR\mu_\ve[f_\ve] \cdot\big(\sqrt{f_\ve}(\CR\psi+{\varphi(t)}\Theta(x))\big)\ud m \ud x\ud t \\
   &\quad- \frac 1\ve\int_{ \BS\times (W_{\delta,T}\backslash U)} \sqrt{f_\ve}\CR\mu_\ve [f_\ve]\cdot(\CR\psi+{\varphi(t)}\Theta(x))\frac{f_\ve-f_0}{\sqrt{f_\ve}}\ud m \ud x\ud t\\
   &=:{J}^\ve_1+{J}^\ve_2.
  \end{split}
\end{equation*}
It remains to show that ${J}_1^\ve$ and ${J}_2^\ve$  both vanish as $\ve\to 0$. For ${J}_1^\ve$, it follows from the Cauchy-Schwarz inequality, \eqref{20150626bound2} and the uniform integrability of $\{f_\ve\}_{\ve>0}$ (see Proposition \ref{prop:compact2}) that
 \begin{equation*}
   |{J}^\ve_1|^2\leq C
   \int_{ \BS\times (U\cup W_{0,\delta})}f_\ve|\CR\psi+\varphi(t)\Theta(x)|^2\ud m \ud x\ud t
 \end{equation*}
 and it  can be made sufficiently small provided that $\tilde{\ve}\ll 1$.
To estimate ${J}^\ve_2$, applying Cauchy-Schwarz inequality, \eqref{20150626bound2} and  \eqref{eq:1.30} yields
 \begin{equation*}
  \begin{split}
     |{J}^\ve_2|^2\leq &~\frac C{\ve^2} \int_{ \BS\times (W_{\delta,T}\backslash U)}f_\ve|\CR \mu_\ve[f_\ve]|^2\ud m \ud x\ud t
     \int_{ \BS\times (W_{\delta,T}\backslash U)}\frac{|f_\ve-f_0|^2}{f_\ve}|\CR\psi{+}\varphi\Theta|^2\ud m \ud x\ud t \\
     \leq & ~C \int_{ \BS\times (W_{\delta,T}\backslash U)}\frac{|f_\ve-f_0|^2}{f_\ve}|\CR\psi{+}\varphi(t)\Theta(x)|^2\ud m \ud x\ud t\xrightarrow{\ve\to 0}0
  \end{split}
 \end{equation*}
 and this proves \eqref{limit-3}.\qed

\section*{Acknowledgments}
The authors would like to thank Professor Zhifei Zhang and Kelei Wang for helpful discussions.
Y. Liu is supported by NSF of China under Grant 11601334.
W. Wang is partly supported by NSF of China under Grant 11501502 and ``the Fundamental Research Funds for the Central Universities" No. 2016QNA3004.

\end{document}